\newcounter{abci}\renewcommand{\theabci}{\alph{abci}}
\theoremstyle{plain}
\newtheorem{lem}{Lemma}
\newtheorem*{lem*}{Lemma}
\newtheorem{thm}{Theorem}
\newtheorem{prop}{Proposition}
\newtheorem*{prop*}{Proposition}
\newtheorem{assum}{Assumption}
\newtheorem{Def}{Definition}
\newtheorem{cor}{Corollary}
\newtheorem*{cor*}{Corollary}
\theoremstyle{definition}
\newtheorem{ex}{Example}
\def \cF {\mathcal{F}}
\def \cI {\mathcal{I}}
\def \cS {\mathcal{S}}
\def \cL {\mathcal{L}}
\def \cT {\mathcal{T}}
\def \cR {\mathcal{R}}
\def\cE{\mathcal{E}}
\def\S{{\mathcal{Z}}}
\def \S {\mathcal{S}_t}
\def\be{\begin{equation}}
\def\ee{\end{equation}}
\title{Invariant Galton-Watson trees: metric properties and attraction with respect to generalized dynamical pruning}
\author{Yevgeniy Kovchegov\footnotemark[1], 
Guochen Xu\footnotemark[1], 
and 
Ilya Zaliapin\footnotemark[2]}
\date{}
\begin{document}

\maketitle

\begin{abstract}
{\it Invariant Galton-Watson (IGW)} tree measures is a one-parameter family of 
critical Galton-Watson measures invariant with respect to a large class 
of tree reduction operations.
Such operations include the generalized dynamical pruning (also known as hereditary reduction 
in a real tree setting) 
that eliminates descendant subtrees according to the value of an arbitrary subtree function that is monotone nondecreasing 
with respect to an isometry-induced partial tree order.
We show that, under a mild regularity condition, the IGW measures are the only attractors of 
critical Galton-Watson measures with respect to the generalized dynamical pruning.
We also derive the distributions of height, length, and size of the IGW trees.  
\end{abstract}

\footnotetext[1]{Department of Mathematics, Oregon State University, Corvallis, OR 97331-4605}
\footnotetext[2]{Department of Mathematics and Statistics, University of Nevada Reno, NV 89557-0084}

\tableofcontents

\section{Introduction}%%%%%%%%%%%%%%%%

In this paper we continue the study of the critical branching processes with the progeny generating function $Q(z)=z+q(1-z)^{1/q}$ for a given parameter $q \in [1/2,1)$.
The importance of these  processes was previously noticed in \cite{AD2015,DLG2002,Winkel2012,KZ20survey,KZ21,LeJan91,Neveu86,Zolotarev1957}.
The random tree measures induced by these branching processes are called here the {\it Invariant Galton-Watson (IGW)} measures.
This paper has two goals.  
First, we establish the main metric properties of the IGW trees:
the distributions of its height, lengths, and size (the number of edges). 
These distributions are well-studied for a special case of the IGW process with $q=1/2$ 
that coincides with 
the critical binary Galton-Watson process \cite{KZ20survey}.
Here we establish the results for the general case of $q \in [1/2,1)$. 
Second, we extend the results of \cite{KZ21}, where the IGW trees were shown to be the attractors
of the pushforward measures under the iterative application of Horton pruning 
(eliminating tree leaves followed by a series reduction). 
Here, we obtain analogous results under a much broader 
{\it generalized dynamical pruning} introduced in \cite{KZ20}.
Since the generalized dynamical pruning can be expressed via the {\it hereditary reduction} of \cite{Winkel2012}, 
the attractor property of IGW tree measures holds for the hereditary reduction as well. 
Also, the IGW trees turn out to be the attractors under the {\it Bernoulli leaf coloring}, a tree reduction studied in \cite{DW2007}.

The paper is organized as follows. Section~\ref{sec:background} contains all the necessary background. The results are stated in Section~\ref{sec:results} and proved in  
Section~\ref{sec:proofs}. 
The paper concludes with a discussion in Section~\ref{sec:discuss}.
Appendix~\ref{appndx:LIT} contains the statement of the Lagrange Inversion Theorem used in this paper.
In Appendix~\ref{appndx:Karamata}, the Karamata's theorem and its converse are stated. 
Appendix~\ref{appndx:ProofsGDPGW} contains a proof of Lemma \ref{lem:pruningGW}.

\section{Background}\label{sec:background}%%%%%%%%%%%%%%%%%

\subsection{Spaces of trees}%%%%%%%
A tree is called {\it rooted} if one of its vertices, denoted by $\rho$, is designated as the tree root.
The existence of root imposes a parent-offspring relation between each pair of adjacent vertices: 
the one closest to the root is called the {\it parent}, and the other the {\it offspring}. 
A tree is called {\it reduced} if it has no vertices of degree $2$, with the root as the only possible exception.
Let $\cT$ denote the space of finite unlabeled rooted reduced trees with no planar embedding.
The absence of planar embedding is the absence of order among the offspring of the same parent.  
The space $\cT$ includes the {\it empty tree} $\phi$ comprised of a root vertex $\rho$ and no edges.

Let $\cL$ denote the space of trees from $\cT$ equipped with edge lengths. Thus, a tree in $\cL$ is itself a metric space.
A metric tree $T\in\cL$ can be considered as a metric space with distance $d(\cdot,\cdot)$  
induced by the Lebesgue measure along the tree edges \cite{KZ20survey}.
Hence, a metric tree $T\in\cL$ can be represented as a pair $T=(S,d)$, where $S$ represents the space and $d$ is the metric defined on space $S$. 

A non-empty rooted tree is called {\it planted} if its root $\rho$ has degree one. In this case the only edge connected to the root is called the {\it stem}.
If the root $\rho$ is of degree $\geq 2$ then the tree is called {\it stemless}.
We denote by $\cT^{|}$ and $\cT^{\vee}$ the subspaces of $\cT$ consisting of planted and stemless
trees, respectively. Similarly, $\cL^{|}$ and $\cL^{\vee}$ are the subspaces of $\cL$ consisting of planted and stemless trees.
Additionally, we include the empty tree $\phi=\{\rho\}$ as an element in each of these subspaces, $\cT^{|}$, $\cT^{\vee}$, $\cL^{|}$, and $\cL^{\vee}$, defined above.

\subsection{Galton-Watson tree measures}%%%%%%%

Consider a Galton-Watson branching process with a given progeny distribution (p.m.f.) $\{q_k\}$, $k=0,1,2,\hdots$.
More specifically, we consider a discrete time Markov process that begins with a single progenitor, 
which produces a single offspring (hence the examined trees are planted). 
At each later step, each existing population member produces $k\ge 0$ offsprings with probability $q_k$, independently from a prior history of the process;
see \cite{AN_book,Harris_book}.

A {\it Galton-Watson tree} is formed by the trajectory of the Galton-Watson branching process, with the progenitor corresponding to the tree root $\rho$.
The single offspring of the progenitor is represented in the tree by the vertex connected to the tree root by the stem \cite{KZ20survey}.
We denote by $\mathcal{GW}(\{q_k\})$ the probability measure on $\cT^{|}$ induced by the Galton-Watson process with progeny distribution $\{q_k\}$. 
Assuming $q_1<1$, the resulting tree is finite with probability one if and only if $\sum_{k=0}^\infty k q_k\leq 1$,
i.e., the Galton-Watson is either subcritical or critical.
In this paper we let $q_1=0$ so that the Galton-Watson process generates a {\it reduced} tree.

For a given probability mass function $\{q_k\}$ with $q_1=0$ and a positive real $\lambda$, consider a random tree $T$ in $\cL^|$ satisfying 
$\textsc{shape}(T) \stackrel{d}{\sim} \mathcal{GW}(\{q_k\})$, 
and such that, conditioned on $\textsc{shape}(T)$, the edge lengths are distributed as i.i.d. exponential random variables with parameter $\lambda$. 
Let $\mathcal{GW}(\{q_k\},\lambda)$ denote the distribution of so defined random tree $T$.  %This distribution is referred here as the {\it exponential Galton-Watson measure}.
Measures $\mathcal{GW}(\{q_k\})$ and $\mathcal{GW}(\{q_k\},\lambda)$ induced by critical (or subcritical) branching processes will be called 
{\it critical (or subcritical) Galton-Watson measures}.

\subsection{Invariant Galton-Watson measures}\label{sec:IGWq}%%%%%%%%%%%

Invariant Galton-Watson measures is a single parameter family of critical Galton-Watson measures $\mathcal{GW}(\{q_k\})$ with $q_1=0$ on $\cT^|$ that we define as follows. 

\begin{Def}[{\bf Invariant Galton-Watson measures in $\cT^|$}]\label{def:IGWq0}
For a given $q \in [1/2,1)$, a critical Galton-Watson measure $\mathcal{GW}(\{q_k\})$ on $\cT^|$
is said to be the {\it invariant Galton-Watson (IGW)} measure with parameter $q$ and denoted by $\mathcal{IGW}(q)$ 
if its generating function is given by
\be\label{eqn:completeGWQz}
Q(z)=z+q(1-z)^{1/q}.
\ee
The respective branching probabilities are $q_0=q$, $q_1=0$, $q_2=(1-q)/2q$, and 
\be\label{eqn:completeGWqk}
q_k= {1-q \over k!\,q}\, \prod\limits_{i=2}^{k-1}(i-1/q) \quad \text{for } k\geq 3.
\ee
Here, if $q=1/2$, then the distribution is critical binary, i.e., 
$\mathcal{GW}(q_0\!= q_2\!=\!1/2)$.
If $q \in (1/2,1)$, the distribution is of Zipf type with
\be\label{eqn:completeGWqkZipf}
q_k={(1-q) \Gamma(k-1/q) \over q \Gamma(2-1/q) \,k!} \sim C k^{-(1+q)/q}, ~\text{ where }~C={1-q \over q\,\Gamma(2-1/q)}.
\ee
\end{Def}

\medskip
\noindent
This family of tree measures (Def.~\ref{def:IGWq0}) is also known as {\it stable Galton-Watson trees} or Galton-Watson trees with {\it stable offspring distribution} \cite{Winkel2012}.
They were previously considered in the work of V.\,M.~Zolotarev \cite{Zolotarev1957}, J.~Neveu \cite{Neveu86}, Y. Le Jan \cite{LeJan91}, T.~Duquesne and  J.-F.~Le Gall \cite{DLG2002}, R.~Abraham and J.-F.~Delmas \cite{AD2015}, T.~Duquesne and M.~Winkel \cite{Winkel2012}.
Moreover, in \cite{Neveu86}, J.~Neveu regards the generating functions \eqref{eqn:completeGWQz} to be the most important in the critical case. 

\medskip
\noindent
The definition of the invariant Galton-Watson (IGW) measure can be extended to $\cL^|$ by assigning i.i.d. exponentially distributed edge lengths.
\begin{Def}[{\bf Invariant Galton-Watson measures in $\cL^|$}]\label{def:IGWqLambda}
For a given $q \in [1/2,1)$ and $\lambda>0$, a random tree $T$ in $\cL^|$ 
is said to be the {\it exponential invariant Galton-Watson tree} 
if it satisfies the following properties: 
\begin{itemize}
  \item[(i)] $\textsc{shape}(T) \stackrel{d}{\sim} \mathcal{IGW}(q)$;  
  \item[(ii)] conditioned on $\textsc{shape}(T)$, the edge lengths are distributed as i.i.d. exponential random variables with parameter $\lambda>0$.
\end{itemize}
Such a tree is denoted by $\mathcal{IGW}(q,\lambda)$.
In other words, $T\stackrel{d}{\sim}\mathcal{GW}(\{q_k\},\lambda)$ 
with $q_k$ as in \eqref{eqn:completeGWqk}.
\end{Def}

\subsection{Invariance and attractor properties of IGW family under Horton pruning}\label{sec:HortonIGW}%%%%%%%%%%%
Horton pruning of a tree $T$ (in $\cT$ or $\cL$) is done by removing all the leaves of $T$ (leaf vertices together with the corresponding adjacent edges)  
followed by consecutive series reduction (removing degree-two vertices by merging adjacent 
edges into one and adding up their lengths for a tree in $\cL$).
The resulting reduced tree is denoted by $\cR(T)$. 
We refer to \cite{KZ20survey} for a detailed treatment of Horton pruning.
The Horton pruning operator $\cR$ induces a contraction map on $\cT$ (or $\cL$). 
The trajectory of each tree $T$ under iterative application of $\cR$, i.e.,
\be\label{TRR0}
T\equiv\cR^0(T)\to \cR^1(T) \to\dots\to\cR^k(T)=\phi,
\ee
is uniquely determined and finite with the empty tree $\phi$ as the (only) fixed point.
In \cite{KZ20}, it was established that, under iterative Horton pruning, 
the $\mathcal{IGW}(q)$ measures are the attractors 
of all critical Galton-Watson trees that satisfy the following 
regularity assumption.
\begin{assum}\label{asm:reg}
Consider a critical Galton-Watson measure $\mathcal{GW}(\{q_k\})$ with $q_1=0$ and the respective 
progeny generating function $Q(z)$.
We assume that the following limit exists:
\be\label{eqn:RegAsmQ}
\lim\limits_{x \rightarrow 1-}{Q(x)-x \over (1-x)\big(1-Q'(x)\big)}.
\ee
\end{assum}

\medskip
\noindent
We will use function $g(x)$ defined in the following proposition from \cite{KZ20}.
\begin{prop}[{{\bf \cite{KZ21}}}]\label{prop:Qminusx}
Consider a critical Galton-Watson measure $\mathcal{GW}(\{q_k\})$ with $q_1=0$ and 
the respective progeny generating function $Q(z)$. 
Then,
$$Q(x)-x=(1-x)^2 g(x)$$
where $g(x)$ is defined as follows. Let $X \stackrel{d}{\sim} \{q_k\}$ be a progeny random variable, then
\be\label{eqn:gxDef}
g(x)=\sum\limits_{m=0}^\infty {\sf E}\big[(X-m-1)_+\big] \, x^m ~=\sum\limits_{m=0}^\infty \sum\limits_{k=m+1}^\infty (k-m-1)q_k \,x^m,
\ee
where $\,x_+=\max\{x,0\}$.
\end{prop}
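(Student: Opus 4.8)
The plan is to establish the algebraic identity $Q(x)-x = (1-x)^2 g(x)$ by directly manipulating the power series expansions of both sides. First I would write $Q(z) = \sum_{k=0}^\infty q_k z^k$ (using $q_1=0$) and exploit criticality, which gives $\sum_k k q_k = Q'(1) = 1$ together with $\sum_k q_k = Q(1) = 1$. The natural route is to compute the Taylor coefficients of $Q(x)-x$ around $x=1$, or equivalently to factor out $(1-x)^2$ by showing that both $Q(x)-x$ and its first derivative vanish at $x=1$. Indeed $Q(1)-1 = 0$ and $Q'(1)-1 = 0$ by criticality, so $Q(x)-x$ has a double root at $x=1$, which guarantees that $g(x) := (Q(x)-x)/(1-x)^2$ is a well-defined analytic function on $[0,1)$; the content of the proposition is to identify its coefficients explicitly.

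The key computational step is to expand $1/(1-x)^2 = \sum_{m=0}^\infty (m+1)x^m$ and match it against the series for $Q(x)-x$, or more cleanly to verify the claimed formula by multiplying the proposed series for $g(x)$ by $(1-x)^2$ and recovering $Q(x)-x$ coefficient by coefficient. Writing $g(x) = \sum_{m=0}^\infty a_m x^m$ with $a_m = \sum_{k=m+1}^\infty (k-m-1)q_k = {\sf E}[(X-m-1)_+]$, I would compute the $x^n$ coefficient of $(1-x)^2 g(x) = g(x) - 2x\,g(x) + x^2 g(x)$, which equals $a_n - 2a_{n-1} + a_{n-2}$ (the second finite difference of the sequence $a_m$). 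The task then reduces to showing that this second difference reproduces the coefficients of $Q(x)-x$, namely $q_n$ for $n \neq 1$ and $q_1 - 1 = -1$ for $n=1$, and $q_0$ for $n=0$.

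The heart of the verification is a telescoping/Abel-summation argument: the discrete second difference $a_n - 2a_{n-1} + a_{n-2}$ of the tail sums $a_m = \sum_{k>m}(k-m-1)q_k$ should collapse, via summation by parts, to the single term $q_n$. Concretely, one checks that the first difference $a_{m-1} - a_m = \sum_{k=m+1}^\infty q_k = {\sf P}(X > m)$ (the survival function), using $\sum_k k q_k = 1$ to handle the boundary, and then the second difference $a_{m-1}-2a_m+a_{m+1}$ telescopes once more to $q_{m+1}$. I would carry this out carefully at the low-order indices $m=0,1$ where the criticality constraints $\sum q_k = 1$ and $\sum k q_k = 1$ enter, since those are exactly the places where the $-x$ term in $Q(x)-x$ and the absence of $q_1$ must be absorbed.

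The main obstacle I anticipate is bookkeeping at the boundary terms of the summation-by-parts rather than any deep difficulty: one must justify interchanging the order of the double summation $\sum_m \sum_{k>m}$ (legitimate since all terms are nonnegative and the total $\sum_k (k-1)q_k = Q'(1) - Q(1) + q_0$ is finite by criticality), and one must ensure the first two coefficients match $q_0$ and $-1 = q_1 - 1$ correctly, which is precisely where $Q'(1)=1$ is consumed. Once the coefficient identity $a_n - 2a_{n-1}+a_{n-2} = [\![Q(x)-x]\!]_{x^n}$ is verified for all $n$, the proposition follows, and the equality of the two expressions for $g(x)$ in \eqref{eqn:gxDef} is immediate from interchanging summation order and recognizing ${\sf E}[(X-m-1)_+] = \sum_{k=m+1}^\infty (k-m-1)q_k$.
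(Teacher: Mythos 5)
Your proof is correct: the second differences $a_n-2a_{n-1}+a_{n-2}$ of the tail sums $a_m={\sf E}[(X-m-1)_+]$ do collapse to $q_n$ for $n\geq 2$, and your boundary checks at $n=0,1$ (where $Q(1)=1$ and $Q'(1)=1$ are consumed, giving $a_0=q_0$ and $a_1-2a_0=-1$) are exactly where criticality enters. The paper states this proposition without proof, citing \cite{KZ21}, where the identity is obtained by dividing $Q(x)-x$ by $(1-x)$ twice via the tail-sum expansion $\frac{1-Q(x)}{1-x}=\sum_{j\ge 0}{\sf P}(X>j)x^j$ (using ${\sf E}[X]=1$ for the second division) --- the same computation as yours, merely run in the derivation direction rather than your verification direction of multiplying the proposed $g(x)$ by $(1-x)^2$.
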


\medskip
\noindent
An important limit is defined in the following lemma.
\begin{lem}[{{\bf \cite{KZ21}}}]\label{lem:dSof1}
Consider a critical Galton-Watson measure $\mathcal{GW}(\{q_k\})$ with $q_1=0$.
If Assumption \ref{asm:reg} is satisfied, then for $g(x)$ defined in \eqref{eqn:gxDef} 
the following limit exists
\be\label{eqn:gLnLimL}
\lim\limits_{x \rightarrow 1-}\left({\ln{g(x)} \over -\ln(1-x)}\right)=L,
\ee
and
$\, \lim\limits_{x \rightarrow 1-}{Q(x)-x \over (1-x)(1-Q'(x))}={1 \over 2-L}$.
\end{lem}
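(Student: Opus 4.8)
The plan is to eliminate $Q$ in favor of $g$ by an exact algebraic identity and then reduce everything to a single application of L'Hôpital's rule. By Proposition~\ref{prop:Qminusx}, $Q(x)-x=(1-x)^2g(x)$ on $[0,1)$, with $g$ the power series in \eqref{eqn:gxDef}. Differentiating this identity gives
\[
1-Q'(x)=(1-x)\bigl[2g(x)-(1-x)g'(x)\bigr],
\]
and substituting both expressions into \eqref{eqn:RegAsmQ}, the common factor $(1-x)^2$ cancels, leaving the identity, valid for every $x\in(0,1)$,
\[
\frac{Q(x)-x}{(1-x)\bigl(1-Q'(x)\bigr)}=\frac{g(x)}{2g(x)-(1-x)g'(x)}=\frac{1}{\,2-t(x)\,},\qquad t(x):=\frac{(1-x)g'(x)}{g(x)}.
\]
The denominator $2g(x)-(1-x)g'(x)=(1-Q'(x))/(1-x)$ is strictly positive on $(0,1)$ by criticality, so this is well defined.

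Next I would record the elementary properties of $g$. Its coefficients $a_m={\sf E}[(X-m-1)_+]$ are nonnegative and $a_0=\sum_{k\ge2}(k-1)q_k>0$ for a nondegenerate critical measure, so $g$ is $C^\infty$, strictly positive, and nondecreasing on $[0,1)$; hence $t(x)\ge0$. Since the left-hand side above is manifestly positive, $2-t(x)>0$, i.e.\ $t(x)\in[0,2)$ and the ratio lies in $[1/2,\infty)$. Assumption~\ref{asm:reg} says this ratio has a limit $R$ as $x\to1-$; by the previous bound $R\ge1/2>0$, so I may invert the relation to conclude that
\[
\ell:=\lim_{x\to1-}t(x)=2-\frac1R
\]
exists and lies in $[0,2]$.

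The final step identifies $\ell$ with $L$ by L'Hôpital's rule. Put $\phi(x)=\ln g(x)$ and $\psi(x)=-\ln(1-x)$; both are differentiable on $(0,1)$ (using $g>0$), $\psi(x)\to+\infty$, and $\psi'(x)=1/(1-x)>0$. The quotient of derivatives is precisely
\[
\frac{\phi'(x)}{\psi'(x)}=\frac{g'(x)/g(x)}{1/(1-x)}=t(x)\xrightarrow[x\to1-]{}\ell .
\]
Because $\psi\to+\infty$, the ``$\infty$ in the denominator'' form of L'Hôpital applies and yields that the limit $L$ in \eqref{eqn:gLnLimL} exists and equals $\ell$. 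Combined with the identity of the first paragraph, this gives $\lim_{x\to1-}\frac{Q(x)-x}{(1-x)(1-Q'(x))}=\frac{1}{2-\ell}=\frac{1}{2-L}$, completing the proof.

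The step I expect to require the most care is the invocation of L'Hôpital: one must use the version that needs only the denominator $\psi$ to diverge (not the numerator $\phi$), since $\ln g(x)$ need not tend to infinity. The nonnegativity and monotonicity of $g$ are exactly what pin down the sign of $t(x)$ and guarantee $R\ge1/2>0$, which is what makes the inversion $\ell=2-1/R$ and the final identity legitimate; I would therefore establish those properties of $g$ before applying the rule.
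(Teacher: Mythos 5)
Your proof is correct and follows essentially the same route as the paper's own argument (given here for the companion Lemma~\ref{lem:dgLn}): the exact identity $\frac{Q(x)-x}{(1-x)(1-Q'(x))}=\bigl(2-\frac{(1-x)g'(x)}{g(x)}\bigr)^{-1}$ obtained from Proposition~\ref{prop:Qminusx}, followed by the denominator-only version of L'H\^{o}pital's rule applied to $\ln g(x)/(-\ln(1-x))$. Your extra bookkeeping—monotonicity and positivity of $g$ forcing $t(x)\in[0,2)$, hence a finite limit $\ell=2-1/R$—is a slight refinement of the paper's terser ``exists or equals $\pm\infty$'' step, but not a different method.
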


\medskip
\noindent
The following three results (Lem.~\ref{lem:2plusMoment}, \ref{lem:RegCond}, and \ref{lem:Zipf}) concerning the applicability of Assumption~\ref{asm:reg} and the limit $L$ in \eqref{eqn:gLnLimL} 
were established in \cite{KZ21}.
\begin{lem}[{{\bf \cite{KZ21}}}]\label{lem:2plusMoment}
Consider a critical Galton-Watson measure $\mathcal{GW}(\{q_k\})$ with $q_1=0$. 
For a progeny variable $X \stackrel{d}{\sim} \{q_k\}$ and $g(x)$ in \eqref{eqn:gxDef}, if
\be\label{eqn:2plusMoment}
{\sf E}[X^{2-\epsilon}]=\sum \limits_{k=0}^\infty k^{2-\epsilon}q_k ~<\infty \qquad \forall \epsilon>0,
\ee
then $~L=\lim\limits_{x \rightarrow 1-}\left({\ln{g(x)} \over -\ln(1-x)}\right)=0$.
If moreover the second moment is finite, i.e., $${\sf E}[X^2]=\sum \limits_{k=0}^\infty k^2q_k ~<\infty,$$
then Assumption~\ref{asm:reg} is satisfied with $\, \lim\limits_{x \rightarrow 1-}{Q(x)-x \over (1-x)(1-Q'(x))}={1 \over 2}$.
\end{lem}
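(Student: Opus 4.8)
The plan is to reduce both assertions to the coefficient asymptotics of the power series $g$. Throughout write $a_m={\sf E}\big[(X-m-1)_+\big]$ for the coefficients in \eqref{eqn:gxDef} and $\bar F(\ell)=\sum_{k\ge\ell}q_k$ for the tail of the progeny law. The starting observation is a telescoping identity: since $(k-m-1)_+-(k-m-2)_+$ equals $1$ when $k\ge m+2$ and $0$ otherwise, summing against $q_k$ gives $a_m-a_{m+1}=\bar F(m+2)$, and because ${\sf E}[X]=1<\infty$ forces $a_m\to0$, telescoping yields
\[
a_m=\sum_{\ell\ge m+2}\bar F(\ell).
\]
This expresses each coefficient as a tail sum of $\bar F$, which is exactly the object controlled by fractional moments of $X$.

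For the first claim ($L=0$) I would argue with a two-sided bound. The lower bound $L\ge0$ is immediate: $g$ has nonnegative coefficients, hence is nondecreasing with $g(x)\ge g(0)=a_0=q_0>0$, so $\ln g(x)$ stays bounded below while $-\ln(1-x)\to+\infty$. For the upper bound, first I would use the standard equivalence ${\sf E}[X^{2-\epsilon}]<\infty\iff\sum_\ell \ell^{1-\epsilon}\bar F(\ell)<\infty$. Comparing $\ell^{1-\epsilon}\ge m^{1-\epsilon}$ on the tail $\ell\ge m+2$ in the displayed formula for $a_m$ then gives $m^{1-\epsilon}a_m\to0$, i.e. $a_m=o(m^{\epsilon-1})$. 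Feeding a bound $a_m\le C_\epsilon m^{\epsilon-1}$ into $g(x)=\sum_m a_mx^m$ and invoking the elementary Abelian estimate $\sum_{m\ge1}m^{\epsilon-1}x^m\sim\Gamma(\epsilon)(1-x)^{-\epsilon}$ yields $g(x)=O\big((1-x)^{-\epsilon}\big)$, whence $\limsup_{x\to1^-}\frac{\ln g(x)}{-\ln(1-x)}\le\epsilon$. Since $\epsilon>0$ is arbitrary, $L=0$.

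For the second claim I would compute the limit in \eqref{eqn:RegAsmQ} directly, so as to establish Assumption~\ref{asm:reg} rather than presuppose it. When ${\sf E}[X^2]<\infty$ the total mass $g(1)=\sum_m a_m=\tfrac12\,{\sf E}[X(X-1)]=\tfrac12\big({\sf E}[X^2]-1\big)$ is finite and strictly positive (positivity because $q_1=0$ rules out $X\equiv1$), and $g(x)\uparrow g(1)$ by monotone convergence. Differentiating the identity $Q(x)-x=(1-x)^2g(x)$ from Proposition~\ref{prop:Qminusx} gives $1-Q'(x)=(1-x)\big(2g(x)-(1-x)g'(x)\big)$, so
\[
{Q(x)-x \over (1-x)\big(1-Q'(x)\big)}={g(x) \over 2g(x)-(1-x)g'(x)}.
\]
The ratio then tends to $\tfrac{g(1)}{2g(1)}=\tfrac12$ once one shows $(1-x)g'(x)\to0$; in particular the limit \eqref{eqn:RegAsmQ} exists and equals $1/2$, which also matches Lemma~\ref{lem:dSof1} at $L=0$.

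The main obstacle is the Tauberian step $(1-x)g'(x)\to0$, together with the parallel upper estimate on $g$ in the first claim. For the second claim I would dispatch it cleanly by convexity: $g$ has nonnegative coefficients, so $g'$ is nondecreasing, and the mean value inequality with the midpoint $\tfrac{1+x}{2}$ gives $g'(x)\le \tfrac{g(\frac{1+x}{2})-g(x)}{(1-x)/2}$, hence $(1-x)g'(x)\le2\big(g(\tfrac{1+x}{2})-g(x)\big)\to0$ as $x\to1^-$. The analogous care in the first claim is purely bookkeeping around the $o(m^{\epsilon-1})$ coefficient bound and the Abelian asymptotic; no Karamata converse is needed for either direction.
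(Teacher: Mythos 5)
Your proof is correct, and it is worth noting that this paper does not actually prove Lemma~\ref{lem:2plusMoment}: the statement is imported from \cite{KZ21} without an argument, so your writeup is a self-contained derivation rather than a variant of anything in the text. All the key steps check out: the telescoping identity $a_m-a_{m+1}={\sf P}(X\geq m+2)$ with $a_m={\sf E}[(X-m-1)_+]\to 0$ (valid since ${\sf E}[X]=1$), giving $a_m=\sum_{\ell\geq m+2}{\sf P}(X\geq \ell)$; the standard tail-moment equivalence yielding $a_m=o(m^{\epsilon-1})$; the Abelian bound $\sum_{m\geq1}m^{\epsilon-1}x^m\sim\Gamma(\epsilon)(1-x)^{-\epsilon}$; the lower bound $g(x)\geq a_0=q_0>0$ (where $a_0=q_0$ uses criticality, and $q_0>0$ follows from criticality with $q_1=0$); and the identity $1-Q'(x)=(1-x)\bigl(2g(x)-(1-x)g'(x)\bigr)$ together with $g(1)=\tfrac12({\sf E}[X^2]-1)\in(0,\infty)$. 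Two features of your route deserve comment. First, you correctly avoided a circularity trap: Lemma~\ref{lem:dgLn} in the paper derives $(1-x)g'(x)/g(x)\to L$ \emph{from} Assumption~\ref{asm:reg}, which here is the conclusion, not a hypothesis; your midpoint-convexity bound $(1-x)g'(x)\leq 2\bigl(g(\tfrac{1+x}{2})-g(x)\bigr)\to 0$ handles the Tauberian step directly and elementarily, with no Karamata machinery (the paper's attractor proofs do invoke the converse Karamata theorem, but that is for a different purpose). Second, your telescoped formula for $a_m$ is slightly more work than necessary -- the one-line Markov-type bound $a_m\leq{\sf E}[X\,{\bf 1}_{X>m}]\leq m^{\epsilon-1}{\sf E}[X^{2-\epsilon}]$ gives the same coefficient decay -- but it costs nothing. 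The only bookkeeping point is that the comparison $\ell^{1-\epsilon}\geq m^{1-\epsilon}$ on the tail requires $\epsilon\in(0,1)$; since the hypothesis holds for all $\epsilon>0$ and you only need $\limsup\leq\epsilon$ for arbitrarily small $\epsilon$, this is a restriction you should state but it is not a gap.
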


%\begin{lem}[{{\bf \cite{KZ20}}}]\label{lem:2ndMomentAsm}
%Consider a critical Galton-Watson measure $\mathcal{GW}(\{q_k\})$ with $q_1=0$.
%If the second moment of the offspring distribution is finite, i.e.,
%$${\sf E}[X^2]=\sum \limits_{k=0}^\infty k^2q_k ~<\infty \quad \text{ for }\quad X \stackrel{d}{\sim} \{q_k\},$$
%then Assumption \ref{asm:reg} is satisfied with 
%$\, \lim\limits_{x \rightarrow 1-}{Q(x)-x \over (1-x)(1-Q'(x))}={1 \over 2}\,$
%and $\,L=\lim\limits_{x \rightarrow 1-}\left({\ln{g(x)} \over -\ln(1-x)}\right)=0$.
%\end{lem}

\noindent
Next lemma provides a basic regularity condition for Assumption \ref{asm:reg} to hold.
\begin{lem}[{{\bf Regularity condition, \cite{KZ21}}}]\label{lem:RegCond}
Consider a critical Galton-Watson measure $\mathcal{GW}(\{q_k\})$ with $q_1=0$ and infinite second moment, 
i.e., $\sum\limits_{k=0}^\infty k^2 q_k =\infty$.
Suppose that for the progeny variable $X \stackrel{d}{\sim} \{q_k\}$ the following
limit exists:
\be\label{eqn:RegCondLim}
\Lambda=\lim\limits_{k \rightarrow \infty}{k \over E[X\,|\,X\geq k]}=\lim\limits_{k \rightarrow 
\infty}{~k\sum\limits_{m=k}^\infty \!q_m ~ \over \sum\limits_{m=k}^\infty \! mq_m}.
\ee 
Then, Assumption \ref{asm:reg} is satisfied with 
$\, \lim\limits_{x \rightarrow 1-}{Q(x)-x \over (1-x)(1-Q'(x))}=1-\Lambda \,$ 
and $\,L=2+{1 \over 1-\Lambda}$.
\end{lem}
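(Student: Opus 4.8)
The plan is to translate the hypothesis on $\Lambda$ into a regular-variation statement for the tail of $\{q_k\}$, push it through to the coefficients of $g$, and then read off $L$ and the limit in \eqref{eqn:RegAsmQ} by a Tauberian argument. Throughout write $\bar F(k)=\sum_{m\ge k}q_m=P(X\ge k)$, a nonincreasing sequence. First I would record the elementary identity
\be
\sum_{m\ge k} m\,q_m \;=\; k\,\bar F(k)+\sum_{m>k}\bar F(m),
\ee
obtained by writing $m=k+(m-k)$ and using $\sum_{m>k}(m-k)q_m=\E[(X-k)_+]=\sum_{j>k}\bar F(j)$. Inserting this into the definition \eqref{eqn:RegCondLim} shows, since $\E[X\mid X\ge k]\ge k$ forces $\Lambda\in(0,1]$, that the existence of $\Lambda$ is equivalent to the convergence
\be
\lim_{k\to\infty}\frac{\sum_{m>k}\bar F(m)}{k\,\bar F(k)}=\frac{1-\Lambda}{\Lambda},
\ee
so that the question is now entirely about the tail sum of $\bar F$ relative to $k\,\bar F(k)$.

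Next I would apply the converse of Karamata's theorem (Appendix~\ref{appndx:Karamata}): since $\bar F$ is nonincreasing and the preceding ratio tends to a finite positive constant, $\bar F$ is regularly varying, of index $-\rho$ with $\rho-1=\Lambda/(1-\Lambda)$, that is $\rho=1/(1-\Lambda)$. Here the hypothesis of infinite second moment, $\sum_k k^2q_k=\infty$, is exactly what confines $\rho$ to $(1,2]$ (criticality already forces $\rho>1$); a finite second moment would give $\rho>2$ and fall instead under Lemma~\ref{lem:2plusMoment}. With $\bar F$ regularly varying of index $-\rho$, the forward Karamata theorem gives, for the coefficients of $g$ in \eqref{eqn:gxDef},
\be
a_m:=\E[(X-m-1)_+]=\sum_{k\ge m+2}\bar F(k)\ \sim\ \frac{m\,\bar F(m)}{\rho-1},
\ee
so $\{a_m\}$ is regularly varying of index $1-\rho\in[-1,0)$.

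I would then feed this into Karamata's Tauberian theorem for the power series $g(x)=\sum_{m\ge0}a_m x^m$: since $a_m\ge0$ is regularly varying of index $1-\rho$, one obtains $g(x)\sim C\,(1-x)^{-(2-\rho)}\ell\!\big(1/(1-x)\big)$ as $x\to1-$ with $\ell$ slowly varying, i.e. $g$ is regularly varying of index $-(2-\rho)\in(-1,0]$ at $1$ (the boundary $\rho=2$ giving $L=0$ with $g$ growing like $\log\frac1{1-x}$). In particular
\be
L=\lim_{x\to1-}\frac{\ln g(x)}{-\ln(1-x)}=2-\rho=2-\frac{1}{1-\Lambda}.
\ee
Finally, differentiating $Q(x)-x=(1-x)^2g(x)$ yields
\be
\frac{Q(x)-x}{(1-x)\big(1-Q'(x)\big)}=\frac{g(x)}{2g(x)-(1-x)g'(x)}=\frac{1}{\,2-(1-x)\,g'(x)/g(x)\,},
\ee
and the monotone density theorem, applicable since $g'$ has nonnegative coefficients and is therefore increasing, gives $(1-x)g'(x)/g(x)\to 2-\rho=L$. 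Hence the limit in \eqref{eqn:RegAsmQ} exists and equals $1/(2-L)=\rho^{-1}=1-\Lambda$, so that Assumption~\ref{asm:reg} holds with the asserted value, consistent with Lemma~\ref{lem:dSof1}.

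The step I expect to be the main obstacle is the rigorous invocation of the converse Karamata theorem: deducing genuine regular variation of $\bar F$ from the single scalar limit $\Lambda$ relies essentially on the monotonicity of $\bar F$, and the whole chain must be phrased so as to cover the degenerate boundary $\rho=2$ (infinite second moment, $L=0$), where $g$ is merely slowly varying and the Tauberian and monotone-density steps require their appropriate limiting forms.
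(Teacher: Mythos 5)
Two preliminary remarks. First, this paper does not actually prove Lemma~\ref{lem:RegCond} --- it is imported from \cite{KZ21} without proof --- so the fair comparison is with the machinery the paper deploys for its closely related new statements (Lemmas~\ref{lem:dgLn} and~\ref{lem:gRegVar}), and your route (converse Karamata to get regular variation, a Tauberian step for $g$, then $(Q(x)-x)/\big((1-x)(1-Q'(x))\big)=g/\big(2g-(1-x)g'\big)$) is exactly that circle of ideas. Second, your conclusion $L=2-\frac{1}{1-\Lambda}$ disagrees with the displayed statement $L=2+\frac{1}{1-\Lambda}$, but yours is the correct value: Lemma~\ref{lem:dSof1} forces $1/(2-L)=1-\Lambda$, hence $2-L=1/(1-\Lambda)$, and the Zipf case of Lemma~\ref{lem:Zipf} (where $\Lambda=(\alpha-1)/\alpha$ and $L=2-\alpha$) confirms it; the plus sign in the statement is a typo, and you were right not to reproduce it.

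The genuine gap is the case $\Lambda=0$. The inequality ${\sf E}[X\,|\,X\geq k]\geq k$ gives only $\Lambda\leq 1$, not $\Lambda>0$, and $\Lambda=0$ is fully compatible with the hypotheses: $q_k\asymp k^{-2}(\log k)^{-2}$ (normalized to be critical) has infinite second moment and $\Lambda=0$, with $\rho=1/(1-\Lambda)=1$, so your parenthetical ``criticality already forces $\rho>1$'' is false. At $\Lambda=0$ your converse-Karamata step sits exactly on the excluded boundary $\alpha=-\beta-1$ of Theorem~\ref{thm:convKaramata}, and it genuinely fails there: the monotone tail $\bar F(k)=(2+\sin\log k)\,k^{-1}(\log k)^{-2}$ satisfies \eqref{eqn:RegCondLim} with $\Lambda=0$ yet is \emph{not} regularly varying, so regular variation of $\bar F$ is not a consequence of the hypothesis in this regime --- nor is it needed, since in that example the conclusion of the lemma still holds (there $a_m\sim 2/\log m$, $L=1$). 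The repair is to run your argument one integration higher: by your own identity, the coefficients of $g$ in \eqref{eqn:gxDef} are exactly the integrated tails, $a_m={\sf E}[(X-m-1)_+]=T(m+1)$ with $T(k)=\sum_{j>k}\bar F(j)$, and the hypothesis states precisely that $k\big(T(k-1)-T(k)\big)/T(k)\to \Lambda/(1-\Lambda)=:c$. Writing $T(k-1)/T(k)=1+c_k/k$ with $c_k\to c$ and summing logarithms gives directly that $T$, hence $(a_m)$, is regularly varying of index $-c$ for every $\Lambda\in[0,1)$, with no appeal to the converse Karamata theorem; and $\Lambda=1$ (i.e.\ $c=\infty$) is ruled out because the same product estimate makes $T$ decay faster than any power, contradicting the infinite second moment. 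From there your Tauberian step and the displayed formula for the Assumption~\ref{asm:reg} ratio finish the proof, with two touch-ups: apply the Tauberian theorem directly to $g'$ (its coefficients $m\,a_m$ are nonnegative and regularly varying of index $1-c>-1$) rather than invoking a monotone density theorem at $x\to 1-$; and note that at the boundary $c=1$ (i.e.\ $\Lambda=1/2$, $L=0$) the limit $(1-x)g'(x)/g(x)\to 0$ requires $\sum_m a_m={\sf E}[X(X-1)]/2=\infty$, which is exactly where the infinite-second-moment hypothesis enters --- in your write-up it is used only to locate $\rho$ in $(1,2]$ and never resurfaces, which is a second, smaller omission.
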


\noindent
Next lemma follows immediately from Lemma~\ref{lem:RegCond}.
\begin{lem}[{{\bf Zipf distribution, \cite{KZ21}}}]\label{lem:Zipf}
Consider a critical Galton-Watson process $\mathcal{GW}(\{q_k\})$ with $q_1=0$ and offspring distribution $\{q_k\}$ of Zipf type:
\be\label{eqn:ZipfTail}
q_k \sim Ck^{-(\alpha+1)} \quad \text{ with }~\alpha \in (1,2]~\text{ and }~C>0.
\ee 
Then, Assumption \ref{asm:reg} is satisfied, and
\be\label{eqn:ZipfTailS1g}
L=\lim\limits_{x \rightarrow 1-}\left({\ln{g(x)} \over -\ln(1-x)}\right)=2-\alpha.
\ee 
\end{lem}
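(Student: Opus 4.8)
The statement is a direct application of the regularity condition in Lemma~\ref{lem:RegCond}, so the plan is simply to verify its two hypotheses for a Zipf offspring law and then read off $L$. First I would confirm that we are in the infinite-second-moment regime covered by Lemma~\ref{lem:RegCond} rather than the finite-variance regime of Lemma~\ref{lem:2plusMoment}. Since $q_k \sim C k^{-(\alpha+1)}$, we have $k^2 q_k \sim C k^{1-\alpha}$, and because $\alpha \in (1,2]$ forces $1-\alpha \geq -1$, the series $\sum_k k^2 q_k$ diverges; hence $\E[X^2]=\infty$ and the standing hypothesis $\sum_k k^2 q_k = \infty$ of Lemma~\ref{lem:RegCond} holds.

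The technical heart is computing the limit $\Lambda$ in \eqref{eqn:RegCondLim}, and the natural tool is Karamata's theorem (Appendix~\ref{appndx:Karamata}). The sequence $q_m \sim C m^{-(\alpha+1)}$ is regularly varying of index $-(\alpha+1) < -1$, so Karamata's theorem gives the tail asymptotics $\sum_{m\geq k} q_m \sim \frac{C}{\alpha}\,k^{-\alpha}$. Likewise $m\,q_m \sim C m^{-\alpha}$ is regularly varying of index $-\alpha$, and since $\alpha>1$ the truncated first moment converges with $\sum_{m\geq k} m\,q_m \sim \frac{C}{\alpha-1}\,k^{1-\alpha}$. Dividing, the prefactor $C$ and the power $k^{1-\alpha}$ cancel, leaving
\be
\Lambda=\lim_{k\to\infty}\frac{k\sum_{m\geq k} q_m}{\sum_{m\geq k} m\,q_m}=\frac{1/\alpha}{1/(\alpha-1)}=\frac{\alpha-1}{\alpha},
\ee
so that $1-\Lambda = 1/\alpha$. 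In particular the limit exists, which is precisely the remaining hypothesis of Lemma~\ref{lem:RegCond}.

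With both hypotheses verified, Lemma~\ref{lem:RegCond} asserts that Assumption~\ref{asm:reg} holds and that the limit \eqref{eqn:RegAsmQ} equals $1-\Lambda = 1/\alpha$. To extract the value of $L$ in \eqref{eqn:ZipfTailS1g}, I would invoke the identity of Lemma~\ref{lem:dSof1}, which matches that same limit with $1/(2-L)$; solving $1/(2-L)=1/\alpha$ yields $L=2-\alpha$, as claimed. The only genuinely delicate point is the passage to the tail asymptotics: one must apply Karamata's theorem to the regularly varying sequences $q_m$ and $m\,q_m$ and handle the boundary case $\alpha=2$, where $\sum_{m\geq k} m\,q_m$ barely converges (index $-2$) while $\E[X^2]$ still diverges. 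No cancellation difficulty arises, however, since $\alpha-1>0$ throughout the admissible range $\alpha\in(1,2]$, so the two tail orders are genuinely comparable and the ratio is finite and nonzero.
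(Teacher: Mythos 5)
Your proposal is correct and follows exactly the route the paper intends: the paper gives no standalone proof of Lemma~\ref{lem:Zipf}, remarking only that it follows immediately from Lemma~\ref{lem:RegCond}, and your write-up supplies precisely the missing details --- the divergence of $\sum_k k^2 q_k$ for $\alpha\in(1,2]$ (including the boundary case $\alpha=2$, where $k^2q_k\sim Ck^{-1}$), and the Karamata computation of the tails giving $\Lambda=\lim_{k\to\infty} k\sum_{m\ge k}q_m\big/\sum_{m\ge k}m q_m=(\alpha-1)/\alpha$, hence $1-\Lambda=1/\alpha$. One remark is worth recording: you extract $L$ by combining $1-\Lambda=1/\alpha$ with the identity $\lim_{x\to 1-}\frac{Q(x)-x}{(1-x)(1-Q'(x))}=\frac{1}{2-L}$ from Lemma~\ref{lem:dSof1}, rather than quoting the formula $L=2+\frac{1}{1-\Lambda}$ as printed in Lemma~\ref{lem:RegCond}; this is the safer choice, since that printed formula appears to carry a sign typo (it would yield $L=2+\alpha$, contradicting \eqref{eqn:ZipfTailS1g}), whereas the relation $L=2-\frac{1}{1-\Lambda}$ implied by Lemma~\ref{lem:dSof1} reproduces your $L=2-\alpha$, which is the claimed value.
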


\medskip
\noindent
In Sections \ref{sec:GDP} and \ref{sec:Attractors} of this paper we consider 
generalizations of the following two theorems that were proved in \cite{KZ21}.
\begin{thm}[{{\bf Self-similarity under Horton pruning, \cite{KZ21}}}]\label{thm:completeGW}
Consider a critical or subcritical Galton-Watson measure $\mu \equiv \mathcal{GW}(\{q_k\})$ with $q_1=0$ that satisfies Assumption \ref{asm:reg}.
Then, a Galton-Watson measure $\mu$ is Horton prune-invariant (self-similar), i.e., the pushforward measure $\nu(T)=\mu \circ \cR^{-1}(T) = \mu \big(\cR^{-1}(T)\big)$ satisfies
$\,\nu\left(T\,|T\ne\phi\right)=\mu(T)$,
if and only if $\mu$ is the invariant Galton-Watson (IGW) measure $\mathcal{IGW}(q)$ with $q\in[1/2,1)$.
\end{thm}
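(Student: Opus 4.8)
The plan is to reduce the prune-invariance condition to a functional equation for the progeny generating function and then solve it under Assumption~\ref{asm:reg}. First I would use the fact (established separately, see Lemma~\ref{lem:pruningGW}) that the Horton pushforward $\nu=\mu\circ\cR^{-1}$ of a critical or subcritical Galton-Watson measure with $q_1=0$ is again a Galton-Watson measure; consequently the self-similarity condition $\nu(T\mid T\neq\phi)=\mu(T)$ is equivalent to the equality of the two progeny generating functions. The whole statement thus becomes: $Q$ is a fixed point of the pruning transformation if and only if $Q(z)=z+q(1-z)^{1/q}$ for some $q\in[1/2,1)$.

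Next I would compute that transformation explicitly. A child subtree survives one Horton pruning iff its root has at least two offspring, which (since $q_1=0$) happens with probability $\beta=1-q_0$; given a vertex has $k$ offspring, its number $N$ of surviving children has generating function $Q(\beta z+1-\beta)$. Tracking the first branching vertex of the pruned tree through the series-reduced (``transparent'', $N=1$) vertices gives, after a short bookkeeping computation, the pruned progeny generating function
\[
\hat Q(z)=\frac{H(z)-Q'(q_0)\,z}{1-Q'(q_0)},\qquad H(z)=\frac{Q\big(q_0+(1-q_0)z\big)-q_0}{1-q_0}.
\]
Writing $B(z)=Q(z)-z$, the fixed-point equation $\hat Q=Q$ collapses to the clean functional equation
\[
B\big(q_0+(1-q_0)z\big)=(1-q_0)\big(1-Q'(q_0)\big)\,B(z),
\]
i.e. $\tilde B(\gamma s)=\kappa\,\tilde B(s)$ with $s=1-z$, $\tilde B(s)=B(1-s)$, $\gamma=1-q_0$, and $\kappa=(1-q_0)(1-Q'(q_0))$, a Schr\"oder-type equation at the fixed point $z=1$ of the contraction $z\mapsto q_0+(1-q_0)z$.

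The crux is solving this equation, and this is where I expect the main obstacle to lie. Its general solution has the self-similar form $\tilde B(s)=s^{\alpha}\,\Pi(\ln s)$ with $\gamma^{\alpha}=\kappa$ (so $\alpha>1$ since $\kappa<\gamma<1$) and $\Pi$ an arbitrary function of period $|\ln\gamma|$, so the real work is to eliminate the log-periodic modulation $\Pi$. This is exactly where Assumption~\ref{asm:reg} enters: using $Q(x)-x=B(x)$ and $1-Q'(x)=-B'(x)$, the ratio in \eqref{eqn:RegAsmQ} equals $\Pi/(\alpha\Pi+\Pi')$ evaluated at $\ln(1-x)$, which is periodic in $\ln(1-x)$; its limit as $x\to1-$ exists only if it is constant, which forces $\Pi'=c_0\Pi$ for some constant $c_0$, and a periodic function with $\Pi'\propto\Pi$ must itself be constant. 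Hence $B(z)=C(1-z)^{\alpha}$ and $Q(z)=z+C(1-z)^{\alpha}$. I would then close the ``only if'' direction by imposing the generating-function constraints: $Q(0)=q_0$ gives $C=q_0$; the requirement $q_1=0$ gives $C\alpha=1$, hence $\alpha=1/q_0$; and nonnegativity of all $q_k$ together with criticality ($\alpha>1$) restricts $\alpha=1/q\in(1,2]$, i.e. $q\in[1/2,1)$, which is precisely $\mathcal{IGW}(q)$. The ``if'' direction is the reverse, routine verification that $Q(z)=z+q(1-z)^{1/q}$ satisfies the displayed functional equation (here $Q'(q_0)=1-(1-q)^{(1-q)/q}$, and the common factor $(1-q)^{(1-q)/q}$ cancels), so the genuine difficulty is the functional-equation step and the use of Assumption~\ref{asm:reg} to rule out the oscillatory solutions; the constant-matching and the converse are straightforward.
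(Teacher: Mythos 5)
Your proposal is correct, and it is worth flagging that this paper does not itself prove Theorem~\ref{thm:completeGW}: the theorem is imported from \cite{KZ21}, and the only piece re-derived internally is the ``if'' half (Prop.~\ref{prop:GDPinvarianceIGW} applied with $\varphi(T)={\sf ord}(T)-1$, resting on Lemma~\ref{lem:pruningGW}); your closing verification that $B(z)=q(1-z)^{1/q}$ satisfies the scaling identity is that same computation. Your reduction is sound: with survival probability $1-q_0$ (a child's pruned subtree is nonempty iff that child has offspring, and $q_1=0$), your $\hat Q$ agrees exactly with \eqref{eqn:pruningG} at $p_t=1-q_0$, and the fixed-point equation $B\big(q_0+(1-q_0)z\big)=(1-q_0)\big(1-Q'(q_0)\big)B(z)$ is the right one. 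Where you genuinely diverge from the paper's own toolkit is the uniqueness half. The paper's uniqueness device, Lemma~\ref{lem:onlyIGW}, solves a Cauchy functional equation and requires $p_t$ to sweep a continuum of scales $(0,1]$; the paper explicitly remarks that this does \emph{not} yield Theorem~\ref{thm:completeGW}, since Horton pruning provides only the single contraction $z\mapsto q_0+(1-q_0)z$, i.e., the discrete scale family $\gamma^n$. Your Schr\"oder-equation analysis --- general solution $\tilde B(s)=s^{\alpha}\Pi(\ln s)$ with $\Pi$ periodic, and Assumption~\ref{asm:reg} forcing the ratio \eqref{eqn:RegAsmQ}, which equals the periodic function $\Pi/(\alpha\Pi+\Pi')$ evaluated at $\ln(1-x)$, to be constant, whence $\Pi$ is constant --- is precisely the mechanism needed to bridge that gap, and it parallels how the paper excludes oscillation in its attractor proofs via regular variation and Karamata's theorem (Lemmas~\ref{lem:dgLn} and~\ref{lem:gRegVar}). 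What your route buys is a short, self-contained proof of the fixed-point classification; what the regular-variation route buys is machinery that generalizes to the limit statements in Theorem~\ref{thm:IGWattractor}, where no exact functional equation holds.

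A few points to tighten, none fatal: (i) differentiating $\Pi$ is legitimate because $Q$ is analytic in the unit disc and $\tilde B>0$ on $(0,1]$, so $\Pi$ is real-analytic; (ii) you should exclude degenerate values of the limit in \eqref{eqn:RegAsmQ}: if the periodic ratio tended to $0$ it would vanish identically, forcing $B\equiv 0$, i.e.\ $q_1=1$, which is excluded, and a nonconstant periodic function cannot have a limit, finite or infinite; (iii) the inequality $\kappa<\gamma$ uses $Q'(q_0)>0$, which holds since criticality or subcriticality with $q_1=0$ forces $q_0\in(0,1)$; and (iv) it is worth saying explicitly that your conclusion $\alpha>1$ gives $Q'(1)=1$, so the argument simultaneously shows that no subcritical measure can be prune-invariant --- a part of the ``only if'' claim as stated, since the theorem's hypothesis allows subcritical $\mu$ while every $\mathcal{IGW}(q)$ is critical.
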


\begin{thm}[{{\bf IGW attractors under iterative Horton pruning, \cite{KZ21}}}]\label{thm:IGWattractorHorton}
Consider a critical Galton-Watson measure $\rho_0 \equiv\mathcal{GW}(\{q_k\})$ with $q_1=0$ on $\cT^|$.
Starting with $k=0$, and for each consecutive integer,
let $\nu_k=\cR_*(\rho_k)$ denote the pushforward probability measure induced by the pruning operator, i.e.,
$\nu_k(T)=\rho_k \circ \cR^{-1}(T) = \rho_k \big(\cR^{-1}(T)\big)$,
and set $\rho_{k+1}(T)=\nu_k\left(T~|T\ne\phi\right)$.
Suppose Assumption \ref{asm:reg} is satisfied. 
Then, for any $T\in\cT^|$,
$$\lim_{k\to\infty}\rho_k(T)=\rho^*(T),$$
where $\rho^*$ denotes the invariant Galton-Watson measure 
$\mathcal{IGW}(q)$ with $q={1 \over 2-L}$ and $L$ as defined in \eqref{eqn:gLnLimL}.

Finally, if the Galton-Watson measure $\rho_0 \equiv\mathcal{GW}(\{q_k\})$ is subcritical, then 
$\rho_k(T)$ converges to a point mass measure, $\mathcal{GW}(q_0\!=\!1)$.
\end{thm}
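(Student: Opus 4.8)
The plan is to turn the stochastic statement into a deterministic recursion on progeny generating functions and then linearize that recursion by a single substitution. Write $\rho_k=\mathcal{GW}(\{q^{(k)}_m\})$ with progeny generating function $Q_k$, $Q_0=Q$. By Lemma~\ref{lem:pruningGW}, Horton pruning sends a critical Galton--Watson measure to a critical Galton--Watson measure, so each $\rho_k$ is of this form; tracking which subtrees survive pruning (a subtree is erased iff its root is a leaf) together with the series reduction that contracts the degree-two vertices produced when a vertex keeps exactly one surviving child, and incorporating the conditioning on $\{T\neq\phi\}$, yields the closed identity
\[
Q_{k+1}(z)-z=\frac{Q_k(x)-x}{(1-a_k)\bigl(1-Q_k'(a_k)\bigr)},\qquad x=a_k+(1-a_k)z,\quad a_k:=Q_k(0).
\]
Since a Galton--Watson measure assigns to each fixed $T\in\cT^{|}$ a probability that is a fixed polynomial in the progeny probabilities, it suffices to prove that $Q_k\to Q^*$ pointwise on $[0,1)$, where $Q^*(z)=z+q(1-z)^{1/q}$ and $q=1/(2-L)$; pointwise convergence of generating functions forces convergence of every Taylor coefficient and hence $\rho_k(T)\to\rho^*(T)$.

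Next I would pass to the coordinate $w=1-z$ and set $R_k(w):=Q_k(1-w)-(1-w)$, so that by Proposition~\ref{prop:Qminusx} one has $R_k(w)=w^2 g_k(1-w)$ and the target $Q^*$ becomes the pure power law $R^*(w)=q\,w^{1/q}$. Writing $c_k:=1-a_k$ and using $1-Q_k'(a_k)=R_k'(c_k)$, the identity above becomes the renormalization
\[
R_{k+1}(w)=\frac{R_k(c_k\,w)}{c_k\,R_k'(c_k)} .
\]
The crux of the argument is then to conjugate this nonlinear map into a trivial rescaling. Introduce the local-index function $\Phi_k(w):=R_k(w)/\bigl(w\,R_k'(w)\bigr)$. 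Differentiating the renormalization gives $R_{k+1}'(w)=R_k'(c_kw)/R_k'(c_k)$, whence $\Phi_{k+1}(w)=\Phi_k(c_kw)$ and, by iteration, $\Phi_k(w)=\Phi_0(u_k w)$ with $u_k:=\prod_{j=0}^{k-1}c_j$. This is exactly where Assumption~\ref{asm:reg} is used: in these coordinates it states precisely that $\lim_{w\to0^+}\Phi_0(w)$ exists, and Lemma~\ref{lem:dSof1} identifies the value as $1/(2-L)=q$.

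The convergence now follows in two steps. First, reading the renormalization at $w=1$ gives $a_k=R_k(1)=\Phi_{k-1}(c_{k-1})=\Phi_0(u_k)$. Because $\Phi_0$ is continuous and strictly positive on $(0,1]$ with a positive limit at $0$, it is bounded below by some $\delta>0$, so $a_k\ge\delta$ and hence $c_k\le1-\delta$ for $k\ge1$, forcing $u_k\to0$ and therefore $a_k=\Phi_0(u_k)\to1/(2-L)=q$. Second, to recover the whole shape I would integrate $d\ln R_k/d\ln w=1/\Phi_k(w)=1/\Phi_0(u_kw)$ from $1$ to $w$: since $1/\Phi_0$ is bounded and $\Phi_0(u_kw)\to q$ for each fixed $w$, dominated convergence gives $R_k(w)/R_k(1)\to w^{1/q}$, and with $R_k(1)=a_k\to q$ this yields $R_k(w)\to q\,w^{1/q}=R^*(w)$, i.e. $Q_k\to Q^*$ as required.

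I expect the genuine obstacle to be the discovery and justification of the substitution $\Phi_k$, which is what collapses an otherwise intractable nonlinear recursion on generating functions into the explicit rescaling $\Phi_k=\Phi_0(u_k\,\cdot\,)$; once this is available the remaining analytic points (that $u_k\to0$ and that the limit passes through the integral, both relying only on $\Phi_0$ being bounded away from $0$ and $\infty$ near the origin, as guaranteed by Assumption~\ref{asm:reg} and Lemma~\ref{lem:dSof1}) are routine. Finally, for the subcritical case $Q'(1)<1$ the same computation gives $R_0(w)\sim(1-Q'(1))w$ as $w\to0^+$, so $\Phi_0(0^+)=1$; the identical argument then forces $a_k=Q_k(0)\to1$, the progeny laws converge to $\delta_0$, and $\rho_k$ converges to the point mass $\mathcal{GW}(q_0\!=\!1)$.
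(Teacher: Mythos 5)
Your proposal is correct, and it takes a genuinely different route from the paper's. The paper itself defers this statement to \cite{KZ21} and instead proves the generalization, Theorem~\ref{thm:IGWattractor}: there, $k$ conditioned prunings are collapsed into a \emph{single} reduction of the original tree, so by \eqref{eqn:pruningG} the resulting generating function is an explicit functional of the original $Q$ and a survival probability $p_t\to 0$, and the limit is extracted from the asymptotics of $Q$ near $1$ via Lemma~\ref{lem:dgLn} (L'H\^opital), Lemma~\ref{lem:gRegVar} (regular variation of $g(1-1/y)$ with index $L$, by the converse Karamata theorem), and Lemma~\ref{lem:IGWattractor}. You instead keep the step-by-step recursion --- your identity is exactly \eqref{eqn:pruningG} with $1-p=a_k=Q_k(0)$, which is indeed the one-step extinction probability of Horton pruning since $\cR(T)=\phi$ iff ${\sf br}(v_0)=0$ when $q_1=0$ --- and linearize it through the local-index function $\Phi_k(w)=R_k(w)/\bigl(wR_k'(w)\bigr)$. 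I checked the conjugation: $R_{k+1}(w)=R_k(c_kw)/\bigl(c_kR_k'(c_k)\bigr)$ gives $R_{k+1}'(w)=R_k'(c_kw)/R_k'(c_k)$, hence $\Phi_{k+1}(w)=\Phi_k(c_kw)$ and $\Phi_k(w)=\Phi_0(u_kw)$ with $u_k=\prod_{j=0}^{k-1}c_j$; moreover Assumption~\ref{asm:reg} is verbatim the existence of $\Phi_0(0+)$, with value $q=1/(2-L)$ by Lemma~\ref{lem:dSof1}, and $a_k=R_k(1)=\Phi_0(u_k)$. Your endgame is sound: $\Phi_0$ is continuous and strictly positive on $(0,1]$ (this uses criticality, $q_1=0$, and strict convexity of $Q$ --- worth one explicit line in a final write-up), so $a_k\ge\delta>0$, $u_k\to0$ geometrically, $a_k\to q$, and dominated convergence in $\ln\bigl(R_k(w)/R_k(1)\bigr)=-\int_w^1 \frac{ds}{s\,\Phi_0(u_k s)}$ yields $R_k(w)\to qw^{1/q}$; coefficient convergence then follows since the $Q_k$ are PGFs with coefficients in $[0,1]$ and the limit is a proper PGF. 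The trade-off: the paper's one-shot argument requires no iterative structure, so it extends verbatim to all generalized dynamical prunings and hereditary reductions (most of which have no semigroup property), whereas your conjugation exploits the fact that the \emph{same} GW-to-GW map is iterated; in exchange, you avoid Karamata/regular-variation machinery entirely, obtain quantitative (geometric) decay of $u_k$, and get the subcritical case unconditionally, since there $\Phi_0(0+)=1$ by monotone convergence of $Q'$ at $1-$ without invoking Assumption~\ref{asm:reg} --- matching the unconditional subcritical claim in the theorem.
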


\subsection{Generalized dynamical pruning}\label{sec:gdp}%%%%%%%%%%%

Given a metric tree $T=(S,d) \in \cL$ and a point $x \in S$, let $\Delta_{x,T}$ be the {\it descendant tree} 
of $x$: the tree comprised of all points of $T$ descendant to $x$, including $x$. 
Then $\Delta_{x,T}$ is itself a tree in $\cL$ with the root at $x$. 

\medskip
\noindent
Let $T_1=(S_1,d_1)$ and $T_2=(S_2,d_2)$ be two metric rooted trees, 
and let $\rho_1$ denote the root of $T_1$. 
A function $f: T_1 \rightarrow T_2$ is said to be an {\it isometry} if 
${\sf Image}[f] \subseteq \Delta_{f(\rho_1),T_2}$ and for all pairs $x,y \in T_1$,
$$d_2\big(f(x),f(y)\big)=d_1(x,y).$$
We use the above defined isometry to define a {\it partial order} in the space $\cL$ as follows.  
We say that $T_1$ is {\it less than or equal to} $T_2$ and write $T_1 
\preceq T_2$ if 
 there is an isometry $f: T_1 \rightarrow T_2$. 
The relation $\preceq$ is a partial order as it satisfies the 
reflexivity, antisymmetry, and transitivity conditions. 
We say that a function $\varphi:\cL \rightarrow \mathbb{R}$ is {\it monotone nondecreasing} 
with respect to the partial order $\preceq $ if
$\varphi(T_1) \leq \varphi(T_2)$ whenever $T_1 \preceq T_2.$

\medskip
\noindent
Next, we recall the definition of the {\it generalized dynamical pruning} as stated in \cite{KZ20,KZ20survey}.
Consider a {\it monotone nondecreasing} function $\varphi:\cL \rightarrow \mathbb{R}_+$ 
with respect to the above defined partial order $\preceq$.
We define the generalized dynamical pruning operator $\S(\varphi,T):\cL\rightarrow\cL$ induced by $\varphi$ 
for any given time parameter $t\ge 0$ as 
\be\label{eqn:GenDynamPruning}
\S(\varphi,T):=\{\rho\}\cup\Big\{x \in T\setminus\rho ~:~\varphi\big(\Delta_{x,T}\big)\geq t \Big\},
\ee
where $\rho$ denotes the root of tree $T$.
Informally, the operator $\S$ cuts all subtrees $\Delta_{x,T}$ for which the value of $\varphi$
is below threshold $t$, and always keeps the tree root.

\medskip
\noindent
Below we discuss some well-studied examples of generalized dynamical pruning.
\begin{ex}[{\bf Pruning via the Horton-Strahler order}]\label{ex:H}
The Horton-Strahler order \cite{KZ20survey,Pec95,BWW00,KZ16} was initially introduced in the context of geomorphology. 
It can be defined via the operation of Horton pruning $\cR$.
The Horton-Strahler order ${\sf ord}(T)$ of a planted tree from $\cL^|$ (or $\cT^|$)
is the minimal number of prunings necessary to eliminate a tree $T$.
The Horton-Strahler order ${\sf ord}(T)$ of a stemless tree from $\cL^\vee$ (or $\cT^\vee$)
equals one plus the minimal number of prunings necessary to eliminate a tree $T$.
For a tree $T$ in either $\cT$ or $\cL$, consider
\begin{equation}\label{eqn:phi_Horton}
\varphi(T) = {\sf ord}(T)-1.
\end{equation} 

\medskip
\noindent
For $k \in \mathbb{N}$, let $\,\cR^k\,$ denote the $k$-th iteration of Horton pruning $\cR$, i.e., $\,\cR^0(T)=T$ and $\,\cR^k=  \underbrace{\cR \circ \hdots \circ \cR}_{k \text{ times}}$.
With the function $\varphi$ as in \eqref{eqn:phi_Horton}, the generalized dynamical pruning operator $\S=\cR^{\lfloor t \rfloor}$  
satisfies {\it discrete semigroup property} \cite{KZ20survey,KZ20}:
$$\cS_t\circ\cS_s=\cS_{t+s} ~\text{ for any }~t,s\in \mathbb{N}_0$$
as  $\, {\cR}^t \circ {\cR}^s ={\cR}^{t+s}$.
A recent survey of results related to invariance of a tree distribution with 
respect to Horton pruning is given in \cite{KZ20survey}.
\end{ex}

\begin{ex}[{\bf Pruning via the tree height}]\label{ex:height}
If we let the $\varphi(T)$ be the height function, i.e., for a tree $T \in\cL$, let
\begin{equation}\label{eqn:phi_hight}
\varphi(T) = \textsc{height}(T),
\end{equation}
then the generalized dynamical pruning $\S(\cdot)=\S(\varphi,\cdot)$ will
coincide with the continuous pruning (leaf-length erasure) studied in Neveu \cite{Neveu86}, 
where the invariance of critical binary Galton-Watson measures with i.i.d. exponential edge lengths with respect to this operation
was established.
In this case the operator $\S$ is known to satisfy {\it continuous semigroup property} \cite{Neveu86,Winkel2012,KZ20}:
$$\cS_t\circ\cS_s=\cS_{t+s} ~\text{ for any }~t,s\ge 0.$$ 
\end{ex}

\begin{ex}[{\bf Pruning via the tree length}]\label{ex:L}
Let the function $\varphi(T)$ equal the total lengths of $T\in\cL$:
\begin{equation}\label{eqn:phi_length}
\varphi(T) = \textsc{length}(T).
\end{equation}
The pruning operator $\S(\cdot)=\S(\varphi,\cdot)$ with the pruning function $\varphi$ as in \eqref{eqn:phi_length} coincides with 
the potential dynamics of continuum mechanics formulation of the 1-D ballistic annihilation model
$A+A \rightarrow \varnothing$ \cite{KZ20}.
Importantly, the operator $\S$ induced by the length function $\varphi$ as in \eqref{eqn:phi_length} does not satisfy the semigroup property
(discrete or continuous),
i.e., $\,\cS_t\circ\cS_s \not=\cS_{t+s}$ \cite{KZ20}.  
\end{ex}

\begin{ex}[{\bf Pruning via the number of leaves}]\label{ex:numL}
Let $\textsc{leaves}(T)$ denote the number of leaves in a tree $T$.
Then
\begin{equation}\label{eqn:phi_leaves}
\varphi(T) = \textsc{leaves}(T),
\end{equation}
is another monotone nondecreasing function.
The generalized dynamical pruning operator $\S(\cdot)=\S(\varphi,\cdot)$ induced by $\varphi$ as in \eqref{eqn:phi_leaves} 
does not satisfy the semigroup property, whether discrete or continuous.
This type of pruning naturally arises in the context of Shreve stream ordering in hydrodynamics.  
\end{ex}

\subsection{Generalized dynamical pruning as a hereditary reduction}\label{sec:DWreductions}%%%%%%%%%%%
 
Duquesne and Winkel \cite{Winkel2012} introduced a very general kind of tree reduction in 
the context of {\it complete locally compact rooted (CLCR) real trees}, which include all the trees in $\cL$.
In \cite{Winkel2012}, a {\it hereditary property} $A$ is defined as a Borel subset in the space $\mathbb{T}$ of CLCR real trees
(more precisely, their equivalence classes under isometry) equipped with the pointed Gromov--Hausdorff metric
such that for a CLCR real tree $T \in \mathbb{T}$ and any $x\in T$,
$$\Delta_{x,T} \in A  \quad \Rightarrow \quad T=\Delta_{\rho,T} \in A.$$
As an example of a hereditary property, one may consider $A=\{T \in \mathbb{T} :\, \textsc{height}(T)\geq t\}$.

\medskip
\noindent
A hereditary property $A \subset \mathbb{T}$ induces a {\it hereditary reduction} operator $R_A:\mathbb{T} \to \mathbb{T}$ defined as
\be\label{eqn:HereditaryReduc}
R_A(T):=\{\rho\}\cup\big\{x \in T\setminus\rho ~:~\Delta_{x,T} \in A \big\}.
\ee

\medskip
\noindent
The following result was proved in \cite[Theorem 2.18]{Winkel2012}.
\begin{thm}[{{\bf Evolution of Galton-Watson trees under hereditary reduction, \cite{Winkel2012}}}]\label{thm:hrGW}
Consider a critical or subcritical Galton-Watson measure $\mu\equiv\mathcal{GW}(\{q_k\},\lambda)$  ($q_1=0$) on $\cL^|$ with generating function $Q(z)$.
For a hereditary property $A \subset \mathbb{T}$, let $\nu$ denote the corresponding pushforward probability measure induced by the hereditary reduction $R_A$,
$$\nu(T)=\mu \circ R_A^{-1}(T) = \mu \big(R_A^{-1}(T)\big).$$
Then, $\nu\big(T \in \cdot \,|R_A(T)\not= \phi\big)\stackrel{d}{=}\mathcal{GW}\left(\{g_k\},\, \lambda \big(1-Q'(1-p)\big)\right)$ is a Galton-Watson tree measure over $\cL^|$ with
independent exponential edge lengths with parameter $\lambda \big(1-Q'(1-p)\big)$,
and generating function
\be\label{eqn:hrGW}
G(z)=z+ {Q\big((1-p)+pz\big)-(1-p) -pz \over  p\big(1-Q'(1-p)\big)},
\ee
where $\,p={\sf P}\big(R_A(T)\not= \phi\big)$.
\end{thm}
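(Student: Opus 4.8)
The plan is to exploit the recursive branching decomposition of the Galton-Watson tree together with the hereditary (equivalently, $\preceq$-monotone up-set) structure of $A$, reducing the theorem to two computations: the transformed offspring generating function and the transformed edge-length law, each obtained from a self-consistency (fixed-point) relation.

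\medskip\noindent\emph{Branching decomposition and thinning of directions.} First I would decompose the planted tree $T$ at the offspring $v$ of the root: conditionally on $v$ having $K\sim\{q_k\}$ children, the branch subtrees $B_1,\dots,B_K$ (each consisting of the edge from $v$ to a child together with the subtree hanging below it, re-rooted at $v$) are i.i.d. copies of $T$, with i.i.d. $\mathrm{Exp}(\lambda)$ edge lengths. Reading the hereditary property through $\preceq$, so that $S_1\preceq S_2$ and $S_1\in A$ force $S_2\in A$, I would show that the surviving set $R_A(T)$ is an ancestor-closed subtree and that the $j$-th direction at $v$ carries a surviving point if and only if $B_j\in A$; since each $B_j\stackrel{d}{=}T$, the directions survive by independent $p$-thinning with $p={\sf P}(T\in A)={\sf P}(R_A(T)\ne\phi)$. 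The decisive structural fact is the self-similarity: conditioned on $B_j\in A$, the reduced branch $R_A(B_j)$ is an independent copy of $R_A(T)$ conditioned on $\{R_A(T)\ne\phi\}$, which forces the conditioned reduction to be Galton-Watson and leaves only the identification of its offspring and edge-length laws.

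\medskip\noindent\emph{Offspring generating function.} Let $M$ be the number of surviving directions at a surviving vertex. Independent $p$-thinning of $K$ gives ${\sf E}[z^M]=\sum_k q_k(1-p+pz)^k=Q(1-p+pz)$. The one delicate point is the conditioning: by heredity $\{M\ge 1\}\subseteq\{T\in A\}$, yet $\{T\in A\}$ also occurs with $M=0$ (a lone stem-stub), so that ${\sf P}(M=0,\,T\in A)=p-\big(1-Q(1-p)\big)$ and the conditional offspring generating function is $\Phi(z):={\sf E}\big[z^M\mid T\in A\big]=\big(Q(1-p+pz)-(1-p)\big)/p$. Because $\cL$ consists of reduced trees, I then apply the series reduction that collapses vertices with a single surviving direction: with $c:=\Phi'(0)=Q'(1-p)={\sf P}(M=1\mid T\in A)$, the reduction recursion $G(z)=\Phi(0)+c\,G(z)+\big(\Phi(z)-\Phi(0)-cz\big)$ solves to $G(z)=\big(\Phi(z)-cz\big)/(1-c)$, which rearranges to the claimed
\[
G(z)=z+\frac{Q\big((1-p)+pz\big)-(1-p)-pz}{p\big(1-Q'(1-p)\big)}.
\]
A quick check confirms $g_1=0$, $G(1)=1$, and $G'(1)=\big(Q'(1)-Q'(1-p)\big)/\big(1-Q'(1-p)\big)$, so that criticality is preserved.

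\medskip\noindent\emph{Edge lengths and the main obstacle.} A reduced edge is the concatenation of the original edges traversed between consecutive branch-points/leaves, passing through intermediate vertices with $M=1$, which occur independently with probability $c$. The subtlety I expect to be the crux is that such an edge may terminate in a partial \emph{stub} inside a pruned edge rather than at an original vertex; here I would invoke the memorylessness of the exponential law. Writing the survival of a point at distance $m$ from $v$ as $\{m\ge L^*\}$ for a threshold $L^*=L^*(\Delta_{v,T})$ forced by monotonicity, the excess length of a stub equals $\ell-L^*$ conditioned on $\ell\ge L^*$, which is again $\mathrm{Exp}(\lambda)$ and independent of $L^*$; hence every original edge entering a reduced edge contributes an independent $\mathrm{Exp}(\lambda)$ increment whether or not it is truncated. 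The reduced-edge Laplace transform $\hat W(s)$ then satisfies the renewal identity $\hat W(s)=\tfrac{\lambda}{\lambda+s}\big((1-c)+c\,\hat W(s)\big)$, whose solution $\hat W(s)=\lambda(1-c)/\big(\lambda(1-c)+s\big)$ identifies the law as $\mathrm{Exp}\big(\lambda(1-c)\big)=\mathrm{Exp}\big(\lambda(1-Q'(1-p))\big)$; independence across reduced edges follows since distinct reduced edges consume disjoint original edges and memorylessness disentangles the stub excesses from the shape-determining thresholds. I expect the heaviest lifting to lie in the decomposition step — the precise characterization of surviving directions and the conditional self-similarity $R_A(B_j)\mid\{B_j\in A\}\stackrel{d}{=}R_A(T)\mid\{R_A(T)\ne\phi\}$ in the CLCR real-tree setting — after which the generating-function and edge-length computations are routine.
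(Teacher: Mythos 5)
Your proposal is correct, and it is essentially the route the paper itself takes: the paper quotes this theorem from Duquesne--Winkel and proves only its specialization to generalized dynamical pruning (Lemma~\ref{lem:pruningGW}) in Appendix~\ref{appndx:ProofsGDPGW}, and your computation coincides with that appendix's --- your $\Phi(z)=\bigl(Q(1-p+pz)-(1-p)\bigr)/p$, $c=Q'(1-p)$, and $G(z)=(\Phi(z)-cz)/(1-c)$ reproduce \eqref{eqn:pruningpk}--\eqref{eqn:pruningG}, and your renewal identity for the reduced-edge Laplace transform is the appendix's Wald-equation step for the rate $\lambda\bigl(1-Q'(1-p)\bigr)$. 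The organizational difference is that the appendix establishes the Galton--Watson structure of the reduced tree by verifying the continuous-time Markov branching property of $\S(\varphi,T)|_{=s}$ via a filtration argument, whereas you use the recursive decomposition at the first branch vertex together with the commutation $R_A(T)\cap B_j=R_A(B_j)$; that commutation is exactly where heredity enters and is the one step you only sketch, so it should be written out. On two minor points you are actually more careful than the appendix: you account explicitly for the event $\{M=0,\,T\in A\}$ (the lone stem stub, which makes the appendix's formula \eqref{eqn:pBinom} literally incorrect at $m=0$; the appendix sidesteps this by solving $G(1)=1$ for $g_0$), and you give the memorylessness argument showing a truncated terminal stub is again $\mathrm{Exp}(\lambda)$ independent of the threshold, which the appendix leaves implicit. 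One caveat to record: your identifications $p={\sf P}(T\in A)={\sf P}\bigl(R_A(T)\neq\phi\bigr)$ and ``direction $j$ survives iff $B_j\in A$'' hold only up to null events --- heredity yields one inclusion, and the converse uses that, conditionally on everything below the first vertex, heredity forces the set of stem lengths placing the tree in $A$ to be an upper half-line, so atomlessness of the exponential edge law kills the boundary case; this is precisely your threshold-$L^*$ observation and should be invoked at that earlier step as well.
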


\medskip
\noindent
Observe the following direct link between the operations of generalized dynamical pruning and hereditary reduction.
Consider a Borel measurable monotone nondecreasing function $\varphi:\cL \rightarrow \mathbb{R}_+$. 
Then, for a fixed $t \geq 0$, the Borel set
\be\label{eqn:gdpAtHR}
A=\{T \in \mathbb{T} :\, \varphi(T)\geq t\}
\ee
is a hereditary property, and therefore $\,\S(\varphi,T)=R_A(T)\,$ is a hereditary reduction.

\medskip
\noindent
The composition of two hereditary properties $A$ and $A'$ was defined in \cite[Def.~2.12]{Winkel2012} as the set
$$A' \circ A=\{T \in \mathbb{T} :\, R_A(T)\in A' \}.$$
Consequently, in Lemma 2.13 of \cite{Winkel2012}, the hereditary reductions were shown to satisfy the composition property, $\,R_{A' \circ A}=R_{A'} \circ R_A$.
Importantly, if we let $A_t$ denote the hereditary property in \eqref{eqn:gdpAtHR}, then
$$A_t \circ A_s \not= A_{s+t}$$
for many (or rather, all but a few) functions $\varphi$, e.g. $\varphi(T)=\textsc{length}(T)$.
Speaking of the exceptions, equation $A_t \circ A_s = A_{s+t}$ is known to hold 
for $\varphi(T)=\textsc{height}(T)$ with real $s,t \in [0,\infty)$ corresponding to Neveu (leaf-length) erasure as in Example~\ref{ex:height},
and for $\varphi(T)={\sf ord}(T)-1$ with integer $s,t \in \mathbb{Z}_+$ corresponding to Horton pruning as is Example~\ref{ex:H}.

\medskip
\noindent
We will need the following adaptation of Theorem \ref{thm:hrGW} for generalized dynamical pruning.
\begin{lem}[{\bf Pruning Galton-Watson trees}]\label{lem:pruningGW}
Consider a critical or subcritical Galton-Watson measure $\mu\equiv\mathcal{GW}(\{q_k\},\lambda)$  ($q_1=0$) on $\cL^|$ with generating function $Q(z)$.
For a monotone nondecreasing function $\varphi:\cL \rightarrow \mathbb{R}_+$, let $\nu$ denote the corresponding pushforward probability measure induced by the pruning operator $\S(T)=\S(\varphi,T)$,
$$\nu(T)=\mu \circ \S^{-1}(T) = \mu \big(\S^{-1}(T)\big).$$
Then, $\nu\big(T \in \cdot \,|T\not= \phi\big)\stackrel{d}{=}\mathcal{GW}\left(\{g_k\},\, \lambda \big(1-Q'(1-p_t)\big)\right)$ is a Galton-Watson tree measure over $\cL^|$ with
independent exponential edge lengths with parameter $\lambda \big(1-Q'(1-p_t)\big)$,
offspring probabilities
\be\label{eqn:pruningpk}
g_0={Q(1-p_t)-(1-p_t) \over p_t\big(1-Q'(1-p_t)\big)}, ~~~g_1=0, ~~\text{ and }~~
g_m={p_t^{m-1} \over m!} Q^{(m)}\!(1-p_t) \,(1-Q'(1-p_t))^{-1} \quad (m \geq 2),
\ee
where $\,p_t=p_t(\lambda,\varphi)={\sf P}\big(\cS_t(\varphi,T) \not= \phi\big)$, and generating function
\be\label{eqn:pruningG}
G(z)=z+ {Q\big((1-p_t)+p_t z\big)-(1-p_t) -p_t z \over  p_t\big(1-Q'(1-p_t)\big)}.
\ee
Moreover, if $\mu(T\in \cdot)$ is critical, then so is  $\nu\big(T \in \cdot \,\big|\,T\not= \phi\big)$.
\end{lem}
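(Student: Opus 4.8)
The plan is to obtain this lemma as an immediate specialization of Theorem~\ref{thm:hrGW}, exploiting the identification \eqref{eqn:gdpAtHR} between generalized dynamical pruning and hereditary reduction. Fix $t\ge 0$ and (assuming, as in the discussion preceding \eqref{eqn:gdpAtHR}, that $\varphi$ is Borel measurable) set $A_t=\{T\in\mathbb{T}:\varphi(T)\ge t\}$. This is a hereditary property and, by \eqref{eqn:gdpAtHR}, the pruning operator coincides with the corresponding hereditary reduction, $\S(\varphi,T)=R_{A_t}(T)$. In particular $p_t={\sf P}\big(\S(\varphi,T)\neq\phi\big)={\sf P}\big(R_{A_t}(T)\neq\phi\big)$ is precisely the probability $p$ appearing in Theorem~\ref{thm:hrGW} when it is applied with $A=A_t$.

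First I would apply Theorem~\ref{thm:hrGW} with this choice of $A$. This yields at once that $\nu\big(T\in\cdot\,|\,T\neq\phi\big)$ is a Galton-Watson measure over $\cL^|$ with i.i.d. exponential edge lengths of parameter $\lambda\big(1-Q'(1-p_t)\big)$ and generating function $G(z)$ as in \eqref{eqn:pruningG}; substituting $p\mapsto p_t$ into \eqref{eqn:hrGW} reproduces \eqref{eqn:pruningG} verbatim. It remains only to read off the offspring probabilities \eqref{eqn:pruningpk} and to verify criticality, both of which follow by elementary differentiation of $G$.

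For the offspring probabilities I would use $g_m=G^{(m)}(0)/m!$. Setting $z=0$ directly in \eqref{eqn:pruningG} gives $g_0$. Differentiating once, $G'(z)=1+\big(Q'((1-p_t)+p_t z)-1\big)/\big(1-Q'(1-p_t)\big)$, so that $G'(0)=0$ and hence $g_1=0$. For $m\ge 2$ the affine terms of $G$ are annihilated and the chain rule gives $G^{(m)}(z)=p_t^{\,m-1}Q^{(m)}((1-p_t)+p_t z)\big(1-Q'(1-p_t)\big)^{-1}$; evaluation at $z=0$ yields exactly the stated formula for $g_m$.

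Finally, for the criticality claim I would evaluate $G'(1)=1+\big(Q'(1)-1\big)/\big(1-Q'(1-p_t)\big)$, which equals $1$ precisely when $Q'(1)=1$, i.e. when $\mu$ is critical; thus criticality is preserved. Because the reduction to Theorem~\ref{thm:hrGW} is direct, there is no substantive obstacle here: the only points requiring mild care are the Borel measurability of $\varphi$ (needed to invoke the hereditary-reduction framework) and the separate treatment of the cases $m=0,1$ versus $m\ge 2$ in the derivative computation.
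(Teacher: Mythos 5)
Your proposal is correct, but it takes a genuinely different route from the proof the paper actually writes out. You derive the lemma as a specialization of the Duquesne--Winkel result (Theorem~\ref{thm:hrGW}) via the hereditary property $A_t=\{T\in\mathbb{T}:\varphi(T)\geq t\}$ from \eqref{eqn:gdpAtHR}, and your calculus is sound: substituting $p\mapsto p_t$ in \eqref{eqn:hrGW} gives \eqref{eqn:pruningG}, your formula $G'(z)=1+\big(Q'((1-p_t)+p_tz)-1\big)/\big(1-Q'(1-p_t)\big)$ agrees with the paper's \eqref{eqn:genGpkdz}, the evaluations $g_0=G(0)$, $g_1=G'(0)=0$, $g_m=G^{(m)}(0)/m!$ for $m\geq 2$ reproduce \eqref{eqn:pruningpk}, and $G'(1)=1+\big(Q'(1)-1\big)/\big(1-Q'(1-p_t)\big)$ settles criticality. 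This is in fact the justification the main text gestures at when it calls the lemma an ``adaptation'' of Theorem~\ref{thm:hrGW}. The paper's written proof (Appendix~\ref{appndx:ProofsGDPGW}), however, is deliberately self-contained and probabilistic: it verifies directly that pruning preserves the Markov branching structure (Markov property plus conditional independence of the descendant subtrees $\{\Delta_{x,\S(\varphi,T)}\}$ along the filtration $\cF_s$), obtains the offspring law by binomial thinning of the progeny at the first internal vertex as in \eqref{eqn:pBinom}, sums via the binomial theorem to get \eqref{eqn:genGpk}, recovers $g_0$ from $G(1)=1$, and gets the exponential rate $\lambda\big(1-Q'(1-p_t)\big)$ from Wald's equation applied to the geometric chain of series-reduced degree-two vertices (whereas you inherit the edge-length statement wholesale from Theorem~\ref{thm:hrGW}). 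The trade-off is exactly the one you flag at the end, and the paper flags it too: your reduction is short but requires $\varphi$ to be Borel measurable so that $A_t$ is a legitimate hereditary property in the pointed Gromov--Hausdorff framework of $\mathbb{T}$ (and, strictly speaking, requires knowing $A_t$ is Borel in $\mathbb{T}$, not just on $\cL$), an assumption absent from the lemma's statement; the appendix proof works for every monotone nondecreasing $\varphi$ on finite-leaf trees with no real-tree or measurability machinery. So treat your argument as valid under that extra hypothesis, with the paper's direct proof covering the general case.
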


\noindent
An alternative proof of Lemma \ref{lem:pruningGW} can be found in Appendix \ref{appndx:ProofsGDPGW}.
Since Lemma \ref{lem:pruningGW} deals with the finite-leaf trees $(\textsc{leaves}(T)<\infty$), this lemma and its proof, as well as the whole set-up of generalized dynamical pruning, 
do not require introducing Gromov--Hausdorff metric and requiring 
the function $\varphi:\cL \rightarrow \mathbb{R}_+$ to be Borel measurable.

\subsection{Bernoulli leaf coloring}\label{sec:coloring}%%%%%%%%%%%

Duquesne and Winkel considered the following type of tree reduction in \cite{DW2007}.
Fix probability $p \in [0,1)$.  
For a finite tree $T \in \cT^|$ (or $\cL^|$), select a subset of its leaves
via performing $\textsc{leaves}(T)$ independent Bernoulli trials, where each leaf is independently selected in with probability $1-p$.
Let $\mathcal{C}_p(T)$ be the minimal subtree of $T$ that contains all selected leaves and the root $\rho$.
If $T$ is a random tree, then so is $\mathcal{C}_p(T)$. 
Notice that $\mathcal{C}_p$ is a random operator induced by a countable sequence of independent Bernoulli random variables.
\begin{thm}[{{\bf Evolution of Galton-Watson trees under Bernoulli leaf coloring, \cite{DW2007}}}]\label{thm:coloring}
Consider a critical or subcritical Galton-Watson measure $\mu\equiv\mathcal{GW}(\{q_k\})$  ($q_1=0$) on $\cT^|$ with generating function $Q(z)$.
Then, for a given $p \in [0,1)$, $\,\mu\big(\mathcal{C}_p(T) \in \cdot \,|\,\mathcal{C}_p(T)\not= \phi\big)\,$ is a Galton-Watson tree measure over $\cT^|$ with
the generating function
\be\label{eqn:coloring}
G_p(z)=z+ {Q\big((1-p)+g_pz\big)-(1-g_p) -g_pz \over  g_p\big(1-Q'(1-g_p)\big)},
\ee
where $\,g_p={\sf P}\big(\mathcal{C}_p(T)\not= \phi\big)$.
\end{thm}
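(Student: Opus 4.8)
The plan is to exploit the branching (self-similar) structure of Galton--Watson trees, which lets the leaf coloring and the subsequent minimal-subtree extraction act independently on the descendant subtrees hanging off any fixed vertex. Fix a vertex $w$ whose number of offspring is distributed as $\{q_k\}$, and let $a$ denote the probability that its descendant subtree $\Delta_{w,T}$ contains no selected leaf. Since $w$ is itself a leaf with probability $q_0$ (and is then deselected with probability $p$), while for $k\ge 2$ offspring the $k$ descendant subtrees independently contain no selected leaf with probability $a$ each, conditioning on the offspring of $w$ yields the fixed-point identity
\[
a=q_0\,p+\sum_{k\ge 2}q_k\,a^{k}=Q(a)-q_0(1-p),
\]
that is, $Q(a)-a=q_0(1-p)$. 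Because the planted tree satisfies $\mathcal{C}_p(T)=\phi$ exactly when the subtree below the stem carries no selected leaf, this identifies $a=1-g_p$, so $g_p={\sf P}\big(\mathcal{C}_p(T)\ne\phi\big)$ is characterized by $Q(1-g_p)-(1-g_p)=q_0(1-p)$.

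First I would record that, conditioned on $\mathcal{C}_p(T)\ne\phi$, the reduced tree is again Galton--Watson, by the same mechanism that underlies Theorem~\ref{thm:hrGW} and Lemma~\ref{lem:pruningGW}: the branching property makes the colorings inside distinct descendant subtrees mutually independent, and whether a subtree is retained (i.e.\ contains a selected leaf) depends only on that subtree; hence the retained subtrees hanging off any genuine branch point are i.i.d.\ copies of a retained tree, which is exactly the Galton--Watson recursion. It then remains only to identify the offspring generating function $G_p$ of the reduced tree.

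The core computation is to follow the tree downward from $w$ to the first vertex that is either a selected leaf or a genuine branch point, passing through the \emph{series-reduced} degree-two vertices along the way. Writing $W(z)={\sf E}\big[z^{D}\,\mathbf 1\{\Delta_{w,T}\text{ retained}\}\big]$, where $D$ is the number of children of that first vertex in the reduced subtree, I would decompose according to the offspring of $w$ and the survival pattern of its children into three contributions: (i) $w$ is a selected leaf, contributing $q_0(1-p)$; (ii) exactly one child subtree of $w$ is retained, so $w$ is suppressed and the recursion re-enters, contributing $\big(\sum_{k\ge 2}k\,q_k\,a^{k-1}\big)\,W(z)=Q'(a)\,W(z)$; and (iii) $j\ge 2$ children are retained, making $w$ a branch point with $D=j$, contributing $\sum_{k\ge 2}q_k\big[(a+g_pz)^{k}-a^{k}-k\,a^{k-1}g_pz\big]=Q(a+g_pz)-Q(a)-g_pz\,Q'(a)$. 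Collecting the three pieces gives the linear equation
\[
\big(1-Q'(a)\big)\,W(z)=q_0(1-p)+Q(a+g_pz)-Q(a)-g_pz\,Q'(a).
\]

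Finally I would solve for $W(z)$, substitute the survival relation $q_0(1-p)=Q(a)-a$ and $a=1-g_p$, and normalize by $g_p$ to obtain $G_p(z)=W(z)/g_p$; writing the linear term $z$ over the common denominator $g_p\big(1-Q'(1-g_p)\big)$ then cancels the $Q'$ contribution and collapses the expression to the generating function \eqref{eqn:coloring}, in exact parallel with \eqref{eqn:hrGW} and \eqref{eqn:pruningG} (the argument inside $Q$ appearing as $1-g_p+g_pz$). Criticality is inherited by differentiating $G_p$ at $z=1$ and using $Q'(1)=1$, just as in Lemma~\ref{lem:pruningGW}. The hard part will be the bookkeeping of the series reduction in step (ii): it is precisely the suppression of degree-two vertices that makes the recursion self-referential, produces the factor $1-Q'(1-g_p)$ in the denominator, and must be combined with a careful justification of the independence of the retained subtrees in order to conclude that the limit object is genuinely a Galton--Watson measure.
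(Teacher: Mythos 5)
Your derivation is correct, but be aware that the paper itself contains no proof of this statement: Theorem~\ref{thm:coloring} is quoted from Duquesne and Winkel \cite{DW2007}, and the nearest in-paper argument is the proof of the pruning analogue, Lemma~\ref{lem:pruningGW}, in Appendix~\ref{appndx:ProofsGDPGW}. Measured against that proof, your route is genuinely different in mechanics though identical in spirit. The appendix proof computes each offspring probability $g_m$ ($m\ge 2$) separately via binomial sums over retained children, sums these into the generating function, and recovers $g_0$ by normalizing $G(1)=1$; you instead encode the series reduction as the self-referential term $Q'(a)W(z)$ in a single renewal-type equation and solve linearly, which produces the $1-Q'(\cdot)$ denominator and the constant term in one stroke. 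Your extra ingredient, with no counterpart in the appendix, is the fixed-point identity $a=q_0p+\sum_{k\ge2}q_ka^k$, i.e. $Q(1-g_p)-(1-g_p)=q_0(1-p)$: in the pruning setting $p_t$ is an exogenous datum, whereas for Bernoulli coloring the survival probability must be characterized internally, and you do this correctly. Your appeal to the branching property for the i.i.d.\ structure of retained subtrees is the same step the paper formalizes (for pruning) with the filtration/Markov-branching argument; a fully rigorous write-up would mirror that formalization, as you acknowledge.

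One substantive point you should not gloss over: the formula you actually derive has $Q\bigl((1-g_p)+g_pz\bigr)$ inside, and you identify it with \eqref{eqn:coloring}, whose printed argument is $(1-p)+g_pz$. These agree only if $g_p=p$, which is false in general. Your version is the correct one: with the printed argument, $G_p(1)=1$ would force $Q(1-p+g_p)=1$, hence $g_p=p$, contradicting e.g.\ the critical binary case, where your fixed point gives $g_p=\sqrt{1-p}$ (and $g_p=(1-p)^q$ for $\mathcal{IGW}(q)$ generally). Moreover, only the $(1-g_p)+g_pz$ form reproduces the invariance of IGW trees asserted immediately after the theorem, parallels \eqref{eqn:hrGW} and \eqref{eqn:pruningG}, and has the shape $Q\bigl(z+(1-z)x\bigr)$ with $x=1-g_p$ required by Lemma~\ref{lem:IGWattractor} in the proof of Theorem~\ref{thm:coloringIGWattractor}. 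So \eqref{eqn:coloring} as printed contains a typo ($1-p$ should read $1-g_p$), and your argument, read literally, proves the corrected statement; state the discrepancy explicitly rather than asserting agreement with the displayed formula.
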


Theorem~\ref{thm:coloring} readily implies that the IGW trees are invariant with respect to Bernoulli leaf coloring.

\section{Results}\label{sec:results}%%%%%%%%%%%%%%%

\subsection{Metric properties of invariant Galton-Watson trees}\label{sec:IGWproperties}%%%%%%%%%%%

Here we derive explicit formulas for selected metric properties of $\mathcal{IGW}(q)$ and $\mathcal{IGW}(q,\lambda)$ trees in respective spaces, $\cT^|$ and $\cL^|$.
This includes the tree height distribution (Thm.~\ref{thm:heightIGW}), the tree length distribution (Thm.~\ref{thm:TreeLD}), the tree size (number of edges) distribution (Thm.~\ref{thm:edgesIGWq}) as well as the tail asymptotics for the distributions of the tree length (Prop.~\ref{prop:TreeLtail}) and tree size (Prop.~\ref{prop:edgesIGWtail}).
The proofs are collected in Sect.~\ref{sec:ProofsIGWproperties}.
\begin{thm}[{\bf Tree height distribution}]\label{thm:heightIGW}
Let $T\in\cL^|$ be an invariant Galton-Watson tree with parameters $q \in [1/2,1)$ and $\lambda>0$, i.e., $T\stackrel{d}{\sim}\mathcal{IGW}(q,\lambda)$.
Then the height of the tree $T$ has the cumulative distribution function 
$$H(x)={\sf P}\big(\textsc{height}(T) \leq x\big)=1-\big(\lambda(1-q) x+1\big)^{-q/(1-q)}, \qquad x\geq 0.$$
\end{thm}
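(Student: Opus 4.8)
The plan is to reduce the problem to a first-order ODE for $H$ by exploiting the recursive self-similarity of the Galton--Watson tree. Since $T\stackrel{d}{\sim}\mathcal{IGW}(q,\lambda)$ is planted, it consists of a stem of length $\ell_0\sim\mathrm{Exp}(\lambda)$ joining the root $\rho$ to the first branching vertex $v_1$, below which hangs a stemless Galton--Watson subtree $\Delta_{v_1}$. Writing $\textsc{height}(T)=\ell_0+\textsc{height}(\Delta_{v_1})$ and letting $G$ denote the distribution function of $\textsc{height}(\Delta_{v_1})$, the branching property gives $\textsc{height}(\Delta_{v_1})=\max_{i=1}^{N}\big(\ell_i+\textsc{height}(\Delta_i)\big)$, where $N\stackrel{d}{\sim}\{q_k\}$, the $\ell_i\sim\mathrm{Exp}(\lambda)$ are i.i.d., and the $\Delta_i$ are i.i.d.\ copies of $\Delta_{v_1}$. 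Conditioning on $N$ and using independence yields $G=Q\circ\psi$, where $\psi(t)={\sf P}(\ell+\textsc{height}(\Delta)\le t)$ is the convolution of $\mathrm{Exp}(\lambda)$ with $G$. Because the planted-tree height is exactly $\ell_0+\textsc{height}(\Delta_{v_1})$, one has $H=\psi$, and therefore $G=Q\circ H$.

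Combining these identities, I would obtain the renewal-type integral equation
\[
H(t)=\int_0^t \lambda e^{-\lambda s}\,Q\big(H(t-s)\big)\,ds=\lambda e^{-\lambda t}\int_0^t e^{\lambda u}\,Q\big(H(u)\big)\,du .
\]
Multiplying by $e^{\lambda t}$ and differentiating in $t$ then gives the autonomous ODE
\[
H'(t)=\lambda\big(Q(H(t))-H(t)\big),\qquad H(0)=0 .
\]
Substituting the IGW generating function $Q(z)-z=q(1-z)^{1/q}$ from \eqref{eqn:completeGWQz} turns this into the separable equation $H'(t)=\lambda q\,(1-H(t))^{1/q}$, which I would integrate directly: separating variables and using $\int (1-y)^{-1/q}\,dy=\tfrac{q}{1-q}(1-y)^{-(1-q)/q}$ together with $H(0)=0$ yields $(1-H(t))^{-(1-q)/q}=1+\lambda(1-q)t$, i.e.\ the claimed formula $H(t)=1-\big(\lambda(1-q)t+1\big)^{-q/(1-q)}$.

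The main technical point, rather than the elementary integration, is justifying the passage from the integral equation to the ODE: one must check that $H$ is a \emph{continuous} distribution function (no atoms, since the stem length is a.s.\ positive and continuous), so that the right-hand side above is continuously differentiable and the differentiation is legitimate, after which uniqueness for the ODE pins down $H$. An equivalent and perhaps cleaner route is to recognize $\mathcal{IGW}(q,\lambda)$ as the genealogy of a continuous-time Markov branching process in which particles live for $\mathrm{Exp}(\lambda)$ times and branch according to $\{q_k\}$; then $H(t)={\sf P}(\textsc{height}(T)\le t)$ is precisely the extinction-by-time-$t$ probability, which solves the backward Kolmogorov equation $H'=\lambda(Q(H)-H)$, $H(0)=0$, recovering the same ODE. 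Either way the final step is the explicit solution above.
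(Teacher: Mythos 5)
Your proposal is correct, and its skeleton is the same as the paper's: the identical stem-plus-subtrees decomposition yields the functional equation $H=\varphi_\lambda \ast (Q\circ H)$, which is converted to the ODE $H'=\lambda q\,(1-H)^{1/q}$ with $H(0)=0$ and integrated by separation of variables to the stated formula. The one genuine difference is how you pass from the convolution equation to the ODE. The paper takes Fourier transforms, integrates by parts, and invokes Parseval; as written this is somewhat formal, since $H$ is a distribution function with $H(x)\to 1$, so $\widehat{H}(t)=\int_{-\infty}^{\infty}e^{itx}H(x)\,dx$ is not an absolutely convergent integral. Your route --- writing $H(t)=\lambda e^{-\lambda t}\int_0^t e^{\lambda u}\,Q\big(H(u)\big)\,du$, multiplying by $e^{\lambda t}$, and differentiating --- is more elementary and fully rigorous once you note (as you do) that $H$, being a convolution with the exponential density, is continuous, so the integrand is continuous and the differentiation is legitimate; ODE uniqueness (the right-hand side $y\mapsto \lambda q(1-y)^{1/q}$ is Lipschitz on $[0,1]$ for $q\in[1/2,1)$) then pins down $H$. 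Your alternative identification of $H(t)$ with the extinction-by-time-$t$ probability of the continuous-time Markov branching process, solving the backward Kolmogorov equation $H'=\lambda\big(Q(H)-H\big)$, is also sound and is implicitly endorsed by the paper itself, which notes in its first example after Lemma~\ref{lem:onlyIGW} that $1-p_t=P_{1,0}(t)$ for $\varphi(T)=\textsc{height}(T)$. In short: same decomposition and same ODE, but your derivation of the ODE is cleaner than the paper's transform argument.
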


\noindent
Notice that for the case $q=1/2$, we have  $H(x)={\lambda x \over \lambda x +2}$ which matches the result in \cite{KZ20survey}.

\bigskip
\begin{thm}[{\bf Tree length distribution}]\label{thm:TreeLD}
Let $T\in\cL^|$ be an invariant Galton-Watson tree with parameters $q \in [1/2,1)$ and $\lambda>0$, i.e., $T\stackrel{d}{\sim}\mathcal{IGW}(q,\lambda)$.
Then the length of the tree $T$ has the probability density function 
\be\label{eqn:ELLx}
\ell(x)=\sum_{n=1}^{\infty}(-1)^{n-1}\frac{\Gamma(n/q+1)}{n!\,(n-1)!\,\Gamma(n/q-n+2)}(\lambda q)^n x^{n-1}, \qquad x\geq 0,
\ee
and the cumulative distribution function
\be\label{eqn:ELLxCDF}
L(x)={\sf P}\big(\textsc{length}(T) \leq x\big)=\sum_{n=1}^{\infty}(-1)^{n-1}\frac{\Gamma(n/q+1)}{n!\,n!\,\Gamma(n/q-n+2)}(\lambda q)^n x^n, \qquad x\geq 0.
\ee
\end{thm}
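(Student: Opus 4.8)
The plan is to compute the Laplace transform $\psi(s)={\sf E}\big[e^{-s\,\textsc{length}(T)}\big]$ of the total length, solve for it in closed form via the Lagrange inversion theorem, and then read off $\ell$ and $L$ by inverting the transform term by term.

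First I would exploit the branching structure of the Galton--Watson tree. Writing $\Lambda=\textsc{length}(T)$, the stem has an $\text{Exp}(\lambda)$ length $E_0$, and conditionally on the offspring number $K\sim\{q_k\}$ of the stem's endpoint the tree decomposes into $K$ independent pieces, each consisting of an attaching edge together with the subtree below it. Each such piece is again an edge leading to a vertex whose offspring law is $\{q_k\}$, with i.i.d.\ exponential edge lengths, hence is a distributional copy of the whole planted tree. This yields the self-similar identity $\Lambda\stackrel{d}{=}E_0+\sum_{i=1}^{K}\Lambda_i$ with $\Lambda_i$ i.i.d.\ copies of $\Lambda$, independent of $E_0$ and $K$. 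Taking Laplace transforms and using ${\sf E}[z^K]=Q(z)$ gives $\psi(s)=\frac{\lambda}{\lambda+s}\,Q\big(\psi(s)\big)$, and substituting $Q(z)=z+q(1-z)^{1/q}$ from \eqref{eqn:completeGWQz} collapses this to the clean relation $s\,\psi(s)=\lambda q\,\big(1-\psi(s)\big)^{1/q}$.

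Next I would solve this relation by Lagrange inversion. Setting $w=\lambda q/s$, it becomes $\psi=w\,(1-\psi)^{1/q}$, i.e.\ $\psi=w\,\phi(\psi)$ with $\phi(\psi)=(1-\psi)^{1/q}$ and $\phi(0)=1\neq0$; I select the branch analytic at $w=0$ with $\psi(0)=0$, which is the relevant one since $\psi(s)\to0$ as $s\to\infty$. The Lagrange inversion theorem (Appendix~\ref{appndx:LIT}) then gives $[w^n]\psi=\frac{1}{n}[\psi^{n-1}](1-\psi)^{n/q}=\frac{(-1)^{n-1}}{n}\binom{n/q}{n-1}$, and expressing the generalized binomial coefficient through Gamma functions yields $[w^n]\psi=(-1)^{n-1}\frac{\Gamma(n/q+1)}{n!\,\Gamma(n/q-n+2)}$. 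Re-substituting $w=\lambda q/s$ expresses $\psi(s)$ as a power series in $1/s$ with these coefficients.

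Finally, inverting term by term via $\mathcal{L}^{-1}[s^{-n}](x)=x^{n-1}/(n-1)!$ produces exactly \eqref{eqn:ELLx}, and integrating over $[0,x]$ (which multiplies the $n$-th coefficient by $x^n/n$) gives \eqref{eqn:ELLxCDF}. The main obstacle is the analytic justification rather than the algebra. I would locate the branch point of $w\mapsto\psi$ at $|w|=q(1-q)^{(1-q)/q}$ (equivalently $|s|=\lambda(1-q)^{(q-1)/q}$, which reduces to the known value $2\lambda$ when $q=1/2$), so that the $1/s$-series converges for all sufficiently large $s$ and coincides there with the true Laplace transform; and a Stirling estimate of the coefficients in \eqref{eqn:ELLx} shows that $|c_n|\sim n^{-n}C^n$ up to polynomial factors, so $\ell$ is entire and $\int_0^\infty e^{-sx}\ell(x)\,dx$ may be evaluated by interchanging sum and integral. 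Uniqueness of the Laplace transform then identifies $\ell$ as the density of $\Lambda$, which in particular guarantees $\ell\ge0$ despite the alternating form of the series.
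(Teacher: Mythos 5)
Your proposal is correct and follows essentially the same route as the paper's proof: the recursive decomposition $\Lambda \stackrel{d}{=} E_0 + \sum_{i=1}^K \Lambda_i$ yields the functional equation $s\,\psi(s)=\lambda q\big(1-\psi(s)\big)^{1/q}$, which the paper likewise solves via the substitution $z=\lambda q/t$ and Lagrange inversion (its Lemma~\ref{lem:LITapp}), followed by term-by-term inversion of the Laplace transform and integration for the CDF. Your added analytic justifications (branch-point location, Stirling bounds, interchange of sum and integral, uniqueness of the transform) go beyond what the paper records but do not change the argument.
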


%\bigskip
%\noindent
%\begin{ex}
%Let $q=1$, then by \eqref{eqn:ELLx}, we have
%$$\ell(x)=\sum_{n=1}^{\infty}(-1)^{n-1}\frac{n!}{n!(n-1)!}\lambda^n x^{n-1}=\lambda\sum_{n=1}^{\infty}\frac{(-\lambda x)^{n-1}}{(n-1)!}=\lambda e^{-\lambda x}$$
%This is true for the degenerate case of $q=1$ as the tree consists of just a stem with an exponentially distributed length.
%\end{ex}

\bigskip
\noindent
\begin{ex}
Let $q={1 \over 2}$. Then, $\ell(x)$ is already known (see \cite{KZ20,KZ20survey}):
\be\label{eqn:ellxI1}
\ell(x)=\frac{1}{x}e^{-\lambda x}I_1(\lambda x)=\sum_{n=0}^{\infty}\frac{\lambda^{2n+1}x^{2n}e^{-\lambda x}}{2^{2n+1} \, (n+1)! \, n!}
\ee
Next, we use the multinomial approach to show that \eqref{eqn:ellxI1} matches the equation \eqref{eqn:ELLx} for $q={1 \over 2}$.
First, we rewrite \eqref{eqn:ellxI1}:
\begin{align}\label{eqn:ellxI1ds}
\ell(x)&=e^{-\lambda x}\sum_{n=0}^{\infty}\frac{\lambda^{2n+1}}{2^{2n+1}\,(n+1)!\,n!}x^{2n}=\sum_{k=0}^{\infty}\frac{(-\lambda)^k}{k!}x^k\sum_{n=0}^{\infty}\frac{\lambda^{2n+1}}{2^{2n+1}\,(n+1)! \, n!}x^{2n} \nonumber \\
&=\sum\limits_{m=0}^\infty \left(\sum\limits_{k+2n=m}{(-1)^k 2^{-2n-1} \over k!\,(n+1)! \, n!} \right)\lambda^{m+1} x^m
%=\sum\limits_{m=0}^\infty \left(\sum\limits_{k+2n=m}{(m+1)! \over k!(n+1)!n!}(-1)^k 2^{-2n-1} \right){\lambda^{m+1} x^m \over (m+1)!}
=\sum\limits_{m=0}^\infty \left(\sum\limits_{k+2n=m}{(-2)^k\over k!\, (n+1)! \, n!} \right){\lambda^{m+1} x^m \over 2^{m+1}}.
\end{align}
Recall that
$$\big(z+z^{-1}+a\big)^{m+1}=\sum\limits_{n+k+j=m+1} {(m+1)! \over n!\, k! \,j!} z^n z^{-j} a^k,$$
and therefore
$${1 \over 2\pi i}\oint\limits_{|z|=1}\big(z+z^{-1}+a\big)^m \,dz=\sum\limits_{n+k+j=m+1} {(m+1)! \over n!\,k!\,j!}a^k  {1 \over 2\pi i}\oint\limits_{|z|=1} z^{n-j} \,dz
=\sum\limits_{k+2n=m} {(m+1)! \over n!\, (n+1)!\,k!}a^k,$$
implying
$$\sum\limits_{k+2n=m} {1\over n!\, (n+1)!\, k!}a^k={1 \over 2\pi i (m+1)! }\oint\limits_{|z|=1}\big(z+z^{-1}+a\big)^{m+1} \,dz.$$
Now, %since $z+z^{-1}-2={(z-1)^2 \over z}$,
$${1 \over 2\pi i}\oint\limits_{|z|=1}\big(z+z^{-1}-2\big)^{m+1} \,dz={1 \over 2\pi i}\oint\limits_{|z|=1}{(z-1)^{2m+2} \over z^{m+1}} \,dz
={1 \over 2\pi i}\oint\limits_{|z|=1}\sum\limits_{j=0}^{2m+2}{2m+2 \choose j}(-1)^j z^{j-m-1} \,dz$$
$$=(-1)^m {2m+2 \choose m}$$
Hence,
\be\label{eqn:a2CombIdty}
\sum\limits_{k+2n=m}{(-2)^k \over k!\, (n+1)!\, n!}={1 \over (m+1)! }\,{1 \over 2\pi i}\oint\limits_{|z|=1}\big(z+z^{-1}-2\big)^{m+1} \,dz
=(-1)^m{1 \over (m+1)!}{2m+2 \choose m}
\ee

\medskip
\noindent
Thus, substituting \eqref{eqn:a2CombIdty} into \eqref{eqn:ellxI1ds}, we obtain
$$\ell(x)=\sum\limits_{m=0}^\infty (-1)^m{1 \over (m+1)!}{2m+2 \choose m}{\lambda^{m+1} x^m \over 2^{m+1}}
=\sum\limits_{m=0}^\infty (-1)^m{(2m+2)! \over (m+1)! \,m! \,(m+2)!}(\lambda q)^{m+1} x^m$$
$$=\sum\limits_{m=0}^\infty (-1)^m{\Gamma\big((m+1)/q+1\big) \over (m+1)! \,m! \, \Gamma\big((m+1)/q - m+1)}(\lambda q)^{m+1} x^m$$
for $q={1 \over 2}$, as in the equation \eqref{eqn:ELLx} of Theorem~\ref{thm:TreeLD}.
\end{ex}

\bigskip
\noindent
The following proposition is needed since computing the cumulative distribution function $L(x)$ in \eqref{eqn:ELLxCDF} 
becomes difficult (even numerically) for all values of $q \not={1 \over 2}$, i.e., $q \in (1/2,1)$.
\begin{prop}[{\bf Tail of the tree length distribution}]\label{prop:TreeLtail}
Let $T\stackrel{d}{\sim}\mathcal{IGW}(q,\lambda)$ be an invariant Galton-Watson tree in $\cL^|$ with parameters $q \in [1/2,1)$ and $\lambda>0$.
Then the cumulative distribution function $L(x)$ in \eqref{eqn:ELLxCDF} satisfies
\be\label{eqn:ELLtail}
1-L(x) \sim {1 \over (\lambda q)^q \,\Gamma(1-q)} x^{-q}.
\ee
\end{prop}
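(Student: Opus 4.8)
The plan is to work with the Laplace transform of the total length, locate a power-law singularity at the origin, and then convert it to a tail estimate by a Tauberian argument. Write $W=\textsc{length}(T)$ and $\phi(s)={\sf E}[e^{-sW}]$ for $s\ge 0$. Since $T\stackrel{d}{\sim}\mathcal{IGW}(q,\lambda)$ is planted, it decomposes into its stem (an $\mathrm{Exp}(\lambda)$ edge) together with the $X\sim\{q_k\}$ subtrees hanging from the first vertex, each of which — its connecting edge included — is an independent copy of $T$. Hence $W\stackrel{d}{=}\xi+\sum_{i=1}^{X}W_i$ with $\xi\sim\mathrm{Exp}(\lambda)$, the $W_i$ i.i.d. copies of $W$, and all factors independent, so that $\phi$ solves the functional equation $\phi(s)=\tfrac{\lambda}{\lambda+s}\,Q(\phi(s))$ with $Q$ as in \eqref{eqn:completeGWQz}. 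This is the same relation underlying the series in Theorem~\ref{thm:TreeLD}, so I would either rederive it or borrow it from that proof.

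Substituting $Q(z)=z+q(1-z)^{1/q}$ and writing $u(s)=1-\phi(s)$, the equation $\phi=\tfrac{\lambda}{\lambda+s}\big(\phi+q(1-\phi)^{1/q}\big)$ simplifies, after clearing the denominator, to the clean implicit relation $s\,(1-u)=\lambda q\,u^{1/q}$. Because a critical Galton--Watson tree is finite almost surely, $W<\infty$ a.s. and $\phi(0^+)={\sf P}(W<\infty)=1$, so $u(s)\downarrow 0$ as $s\downarrow 0$. Feeding $1-u\to 1$ back into the relation gives $s\sim \lambda q\,u^{1/q}$, that is, $1-\phi(s)=u(s)\sim (\lambda q)^{-q}\,s^{q}$ as $s\to 0^+$.

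It remains to turn this regularly varying behaviour of $1-\phi$ at the origin into the tail of $W$. Writing $\bar F(x)={\sf P}(W>x)$, integration by parts yields the identity $\tfrac{1-\phi(s)}{s}=\int_0^\infty e^{-sx}\bar F(x)\,dx$, so the Laplace transform of the monotone function $\bar F$ is asymptotic to $(\lambda q)^{-q}s^{-(1-q)}$ as $s\to 0^+$. Applying Karamata's Tauberian theorem to the nondecreasing primitive $\int_0^x\bar F$ gives $\int_0^x\bar F(t)\,dt\sim \tfrac{(\lambda q)^{-q}}{\Gamma(2-q)}\,x^{1-q}$, and then the converse of Karamata's theorem (the monotone density theorem, Appendix~\ref{appndx:Karamata}), applicable since $\bar F$ is monotone, allows differentiation of this to $\bar F(x)\sim \tfrac{(1-q)(\lambda q)^{-q}}{\Gamma(2-q)}\,x^{-q}$. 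Using $\Gamma(2-q)=(1-q)\Gamma(1-q)$ collapses the constant to $\tfrac{1}{(\lambda q)^q\,\Gamma(1-q)}$, which is exactly \eqref{eqn:ELLtail}.

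The routine parts are the functional equation and the elementary $s\to 0^+$ expansion of $u$. The delicate step is the Tauberian passage: one must note that $q\in(0,1)$ puts the exponent $1-q$ in $(0,1)$, so the Karamata theorems apply with a genuine non-integer power and a constant slowly varying factor, and one must track the $\Gamma$-factors so that the intermediate $\Gamma(2-q)$ reduces to the claimed $\Gamma(1-q)$. I would sanity-check the constant against the $q=\tfrac12$ case, where $\ell(x)=x^{-1}e^{-\lambda x}I_1(\lambda x)\sim (2\pi\lambda)^{-1/2}x^{-3/2}$ integrates to $\bar F(x)\sim \sqrt{2/(\pi\lambda)}\,x^{-1/2}$, in agreement with \eqref{eqn:ELLtail}.
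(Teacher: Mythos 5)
Your proof is correct and follows essentially the same route as the paper: the same Laplace-transform functional equation $s\,\cL[\ell](s)=\lambda q\,\big(1-\cL[\ell](s)\big)^{1/q}$, the same integration-by-parts identity $1-\cL[\ell](s)=s\,\cL[1\!-\!L](s)$, and a Tauberian conversion of $\cL[1\!-\!L](s)\sim(\lambda q)^{-q}s^{-(1-q)}$ as $s\to 0+$ into the tail \eqref{eqn:ELLtail}. The only cosmetic difference is that you split the last step into Karamata's Tauberian theorem plus the monotone density theorem (which, note, is not literally the converse Karamata theorem stated in Appendix~\ref{appndx:Karamata}, though it is the standard companion result and your monotonicity hypothesis for it is verified), whereas the paper invokes Feller's Hardy--Littlewood--Karamata theorem for monotone functions in a single stroke.
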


\begin{ex}
For $q={1 \over 2}$, $L(x)$ is expressed as follows  \cite{KZ20,KZ20survey}:
$$L(x)=1-e^{-\lambda x}\big(I_0(\lambda x)+I_1(\lambda x)\big).$$
Thus, since $I_a(z) \sim {1 \over \sqrt{2\pi z}}e^z$ for all $a \geq 0$, we have 
$$1-L(x)=e^{-\lambda x}\big(I_0(\lambda x)+I_1(\lambda x)\big) \sim \sqrt{2 \over \lambda \pi}x^{-1/2}={1 \over (\lambda q)^q \,\Gamma(1-q)}x^{-q}
\quad\text{ for }\,q={1 \over 2}$$
as $\,\Gamma(1/2)=\sqrt{\pi}$. 
This matches the general case in Prop.~\ref{prop:TreeLtail}. 
\end{ex}

\bigskip
\noindent
The following is a discrete analog of Theorem~\ref{thm:TreeLD}.
\begin{thm}[{\bf Tree size distribution}]\label{thm:edgesIGWq}
Let $T\in\cT^|$ be an invariant Galton-Watson tree with parameters $q \in [1/2,1)$, i.e., $T\stackrel{d}{\sim}\mathcal{IGW}(q)$.
Then, the number of edges in $T$ is distributed with the probability mass function
\be\label{eqn:edgesIGWq}
\alpha(n)=\sum\limits_{k=1}^n (-1)^{k-1} {n-1 \choose k-1}{\Gamma(k/q+1) \over k!\,\Gamma(k/q -k+2)}\,q^k \qquad \text{ for }~n=1,2,\hdots,
\ee
with the cumulative distribution function
\be\label{eqn:CDFedgesIGWq}
\mathcal{A}(x)=\sum\limits_{k=1}^{\lfloor x \rfloor} (-1)^{k-1} {\lfloor x \rfloor \choose k}{\Gamma(k/q+1) \over k!\,\Gamma(k/q -k+2)}\,q^k, \qquad x \geq 1.
\ee
\end{thm}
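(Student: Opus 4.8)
The plan is to obtain the probability generating function of the tree size, extract its coefficients by the Lagrange Inversion Theorem of Appendix~\ref{appndx:LIT}, and then collapse the resulting double sum into the stated cumulative distribution function via a hockey-stick identity. First I would record that for $T\stackrel{d}{\sim}\mathcal{IGW}(q)$ on $\cT^|$ the number of edges equals the total progeny $N$ of the underlying Galton--Watson process started from the single offspring of the root: the tree has $N+1$ vertices, hence $N$ edges. Decomposing at that first reproducing vertex gives $N=1+\sum_{i=1}^{X}N_i$ with $X\stackrel{d}{\sim}\{q_k\}$ and $N_i$ i.i.d. copies of $N$, so the generating function $\Phi(s)={\sf E}[s^N]$ satisfies the branching identity
$$\Phi(s)=s\,Q\big(\Phi(s)\big),\qquad Q(z)=z+q(1-z)^{1/q},$$
as in \eqref{eqn:completeGWQz}. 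Since $\alpha(n)={\sf P}(N=n)=[s^n]\Phi(s)$, it remains to read off these coefficients.

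Next I would apply Lagrange inversion. Writing $w=\Phi(s)$, the functional equation has the form $w=s\,\phi(w)$ with $\phi(w)=Q(w)$ and $\phi(0)=q\neq 0$, so the hypotheses of Appendix~\ref{appndx:LIT} hold and
$$\alpha(n)=[s^n]\Phi(s)=\frac1n\,[w^{n-1}]\,Q(w)^n=\frac1n\,[w^{n-1}]\big(w+q(1-w)^{1/q}\big)^n.$$
Expanding by the binomial theorem, $\big(w+q(1-w)^{1/q}\big)^n=\sum_{k=0}^n\binom{n}{k}q^k\,w^{n-k}(1-w)^{k/q}$, and extracting the coefficient of $w^{n-1}$ with the generalized binomial series $(1-w)^{k/q}=\sum_{j\ge 0}\binom{k/q}{j}(-w)^j$ isolates the term $j=k-1$, whence
$$\alpha(n)=\frac1n\sum_{k=1}^n(-1)^{k-1}\binom{n}{k}\binom{k/q}{k-1}q^k,$$
where the $k=0$ term drops out because $\binom{0}{-1}=0$.

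The remaining work is bookkeeping. Using $\tfrac1n\binom{n}{k}=\tfrac1k\binom{n-1}{k-1}$ together with $\binom{k/q}{k-1}=\Gamma(k/q+1)/\big((k-1)!\,\Gamma(k/q-k+2)\big)$ and $\tfrac1k\cdot\tfrac1{(k-1)!}=\tfrac1{k!}$ recovers \eqref{eqn:edgesIGWq}. For the cumulative distribution function I would set $M=\lfloor x\rfloor$, write $\mathcal{A}(x)=\sum_{n=1}^{M}\alpha(n)$, interchange the two finite sums, and collapse the inner one by the hockey-stick identity $\sum_{n=k}^{M}\binom{n-1}{k-1}=\binom{M}{k}$, which yields \eqref{eqn:CDFedgesIGWq}. (As a sanity check this reproduces $\alpha(1)=q=q_0$, $\alpha(2)=0$ since $q_1=0$, and $\alpha(3)=q_2\,q^2=q(1-q)/2$.)

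The hard part will be the coefficient-extraction step: one must justify the term-by-term use of the non-integer-exponent binomial series and correctly track the index shift that sends $(1-w)^{k/q}$ to its $w^{k-1}$ coefficient, while verifying that the spurious $k=0$ contribution vanishes. A secondary point deserving a brief remark is the validity of the inversion despite the branch-point singularity of $Q$ at $w=1$: the Lagrange identity holds at the level of formal power series, and since $\Phi$ is analytic at $s=0$ the extracted coefficients are genuinely the probabilities $\alpha(n)$, which sum to one by criticality ($Q'(1)=1-(1-z)^{1/q-1}\big|_{z=1}=1$, so $N<\infty$ almost surely).
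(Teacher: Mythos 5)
Your proof is correct, and it shares the paper's skeleton: the same decomposition at the first branching vertex, the same fixed-point equation $a(z)=z\,Q(a(z))$ for the size generating function, Lagrange inversion, and the same hockey-stick collapse $\sum_{n=k}^{\lfloor x\rfloor}\binom{n-1}{k-1}=\binom{\lfloor x\rfloor}{k}$ for the CDF. The one genuine difference is how the inversion is executed. The paper rearranges $a=z\,Q(a)$ into $\frac{a}{(1-a)^{1/q}}=\frac{qz}{1-z}$ and invokes Lemma~\ref{lem:LITapp} --- the inversion of $w\mapsto w/(1-w)^{1/q}$, already established for the tree-length theorem --- so that the coefficient $\binom{n-1}{k-1}$ arises from expanding $\bigl(z/(1-z)\bigr)^k$ in powers of $z$; you instead apply Theorem~\ref{thm:LIT} directly in the standard form $\alpha(n)=\frac1n\,[w^{n-1}]\,Q(w)^n$, and $\binom{n-1}{k-1}$ emerges from $\frac1n\binom{n}{k}=\frac1k\binom{n-1}{k-1}$ after extracting $[w^{k-1}](1-w)^{k/q}=(-1)^{k-1}\binom{k/q}{k-1}$. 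Your route is self-contained in that it bypasses Lemma~\ref{lem:LITapp} entirely, at the cost of a double binomial expansion; the paper's substitution lets a single inversion computation serve both the length and the size distributions, which is an economy worth noting. The points you flag as delicate are in fact routine and your remarks already dispose of them: $Q$ is analytic on $|w|<1$ despite the branch point at $w=1$, so expanding the finite binomial sum $Q(w)^n=\sum_{k=0}^n\binom{n}{k}q^k w^{n-k}(1-w)^{k/q}$ and extracting coefficients termwise is legitimate, the $k=0$ term contributes nothing to $[w^{n-1}]$, and since the analytic solution of $w=s\,Q(w)$ with $w(0)=0$ is unique near $s=0$ (implicit function theorem, $Q(0)=q\neq0$), the extracted coefficients are indeed the probabilities $\alpha(n)$, which your criticality remark ($Q'(1)=1$, so $N<\infty$ almost surely) confirms sum to one.
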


\bigskip
\noindent
Next proposition is analogous to Prop.~\ref{prop:TreeLtail} and has a similar proof. It gives an estimate on the tail distribution $1-\mathcal{A}(x)$. 
\begin{prop}[{\bf Tail of the tree size distribution}]\label{prop:edgesIGWtail}
Let $T\stackrel{d}{\sim}\mathcal{IGW}(q)$ be an invariant Galton-Watson tree in $\cT^|$ with parameters $q \in [1/2,1)$.
Then the cumulative distribution function $\mathcal{A}(x)$ in \eqref{eqn:CDFedgesIGWq} satisfies
\be\label{eqn:edgesIGWtail}
1-\mathcal{A}(x) \sim {1 \over q^q \,\Gamma(1-q)} x^{-q}.
\ee
\end{prop}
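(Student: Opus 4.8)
The plan is to read the tail off the probability generating function of the edge count, exactly paralleling the proof of Proposition~\ref{prop:TreeLtail}. Let $N$ denote the number of edges of $T$, so that $\alpha(n)={\sf P}(N=n)$ as in \eqref{eqn:edgesIGWq} and $\mathcal{A}(x)={\sf P}(N\le x)$ as in \eqref{eqn:CDFedgesIGWq}, and set $B(z)={\sf E}[z^N]=\sum_{n\ge1}\alpha(n)z^n$. The derivation of \eqref{eqn:edgesIGWq} in the proof of Theorem~\ref{thm:edgesIGWq} goes through the Lagrange Inversion Theorem (Appendix~\ref{appndx:LIT}) applied to the generating function $M(z)$ of the total progeny of the subtree below the stem, characterized by $M(z)=zQ(M(z))$ with $Q$ as in \eqref{eqn:completeGWQz}; indeed $\alpha(n)=[z^n]M(z)=\tfrac1n[w^{n-1}]Q(w)^n$ reproduces \eqref{eqn:edgesIGWq}. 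Hence $B(z)=M(z)$, and since $\mathcal{IGW}(q)$ is critical one has $M(1)=1$, so $z=1$ is the dominant singularity of $B$.

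The key step is the singular expansion of $M$ at $z=1$. Writing $u=1-M(z)$ and using $Q(1-u)=(1-u)+q\,u^{1/q}$ from \eqref{eqn:completeGWQz}, the relation $M=zQ(M)$ rearranges to $q\,u^{1/q}=(1-z)\big(1-u+q\,u^{1/q}\big)$. As $z\to1^-$ both $u\to0$ and the parenthesis tends to $1$, so to leading order $u^{1/q}\sim(1-z)/q$, that is,
$$1-M(z)\sim q^{-q}(1-z)^{q},\qquad z\to1^-.$$
This is the discrete counterpart of the singular behaviour already extracted in the proof of Proposition~\ref{prop:TreeLtail}: there the Laplace transform of $\textsc{length}(T)$ equals $M\big(\lambda/(\lambda+s)\big)$ with $1-z=s/(\lambda+s)\sim s/\lambda$, and the factor $\lambda^{-q}$ merely rescales $(1-z)^q$, which is why the present constant is the $\lambda=1$ specialization of \eqref{eqn:ELLtail}.

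Finally I transfer the expansion to the tail. Since ${\sf P}(N>n)$ is a nonnegative nonincreasing sequence and $\sum_{n\ge0}{\sf P}(N>n)z^n=(1-B(z))/(1-z)$, the display above gives
$$U(z):=\sum_{n\ge0}{\sf P}(N>n)\,z^n\sim q^{-q}(1-z)^{-(1-q)},\qquad z\to1^-.$$
Applying Karamata's Tauberian theorem (Appendix~\ref{appndx:Karamata}) with index $1-q\in(0,1)$ yields $\sum_{k=0}^{n}{\sf P}(N>k)\sim q^{-q}n^{1-q}/\Gamma(2-q)$, and the converse of Karamata's theorem (the monotone density theorem), valid because ${\sf P}(N>n)$ is monotone, then gives ${\sf P}(N>n)\sim q^{-q}(1-q)\,n^{-q}/\Gamma(2-q)$. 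Using $\Gamma(2-q)=(1-q)\Gamma(1-q)$ collapses the constant to $1/\big(q^q\Gamma(1-q)\big)$, which is \eqref{eqn:edgesIGWtail}. I expect the main obstacle to be the rigorous justification of the singular expansion of $M$—controlling the correction terms in the implicit equation to confirm that $q^{-q}(1-z)^q$ is the exact leading order and that $M$ has the regularly varying structure required for the Tauberian passage; once this is in place the remaining computation is routine.
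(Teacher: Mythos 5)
Your proof is correct and is essentially the proof the paper intends: the paper omits the argument, stating only that it is ``analogous to Prop.~\ref{prop:TreeLtail}'', and your derivation is exactly that analogue, replacing the Laplace transform by the probability generating function $a(z)=zQ(a(z))$ and the Hardy--Littlewood--Karamata theorem for Laplace transforms by the power-series Tauberian theorem plus monotone density. The one obstacle you flag at the end is in fact a non-issue: the functional equation gives the exact identity $1-a(z)=\bigl(a(z)(1-z)/(qz)\bigr)^{q}$ for $z\in(0,1)$, so $1-a(z)\sim q^{-q}(1-z)^{q}$ follows immediately from $a(z)\to 1$ (criticality), with no correction terms to control.
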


\subsection{Invariance under generalized dynamical pruning}\label{sec:GDP}%%%%%%%%%%%
Here we consider invariance (Prop.~\ref{prop:GDPinvarianceIGW}) and uniqueness (Lem.~\ref{lem:onlyIGW}) properties of $\mathcal{IGW}(q,\lambda)$ measures under generalized dynamical prunings.
Although both Prop.~\ref{prop:GDPinvarianceIGW} and Lem.~\ref{lem:onlyIGW} follow immediately from the results of Duquesne and Winkel \cite[Sect.~3.2.1]{Winkel2012}, 
alternative proofs of these statements that do not rely on a real tree setting are presented in Sect.~\ref{sec:ProofsGDP}.

\medskip
\noindent
We say that a Galton-Watson tree measure $\mu$ is invariant under the operation of pruning $\S(\cdot)=\S(\varphi,\cdot)$ if
for $\,T \stackrel{d}{\sim} \mu$,
$${\sf P}\big(\textsc{shape}(\S(T))=\tau \,\big|\,\S(T)\not=\phi \big)=\mu\big(\textsc{shape}(T)=\tau\big), \qquad \text{ for all }\, \tau \in \cT^|.$$

\begin{prop}[{\bf Invariance with respect to generalized dynamical pruning}]\label{prop:GDPinvarianceIGW}
Let $T\stackrel{d}{\sim}\mathcal{IGW}(q,\lambda)$ be an invariant Galton-Watson tree 
with parameters $q \in [1/2,1)$ and $\lambda>0$.
Then, for any monotone nondecreasing function $\varphi:\cL^|\rightarrow\mathbb{R}_+$ and
any $t>0$ we have
\[T^t:=\{\cS_t(\varphi,T)|\cS_t(\varphi,T) \not= \phi\} \stackrel{d}{\sim} 
\mathcal{IGW}\left(q,\,\cE _t(\lambda)\right),\]
where $\cE _t(\lambda)=\lambda p_t^{(1-q)/q}$ and  $p_t=p_t(\lambda,\varphi)={\sf P}(\cS_t(\varphi,T) \not= \phi)$. 
\end{prop}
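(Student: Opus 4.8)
The plan is to invoke Lemma~\ref{lem:pruningGW} directly and verify that the resulting Galton-Watson measure, after pruning an $\mathcal{IGW}(q,\lambda)$ tree, is again an IGW measure with the same shape parameter $q$ but a rescaled edge-length parameter. The key observation is that the IGW generating function $Q(z)=z+q(1-z)^{1/q}$ has a special algebraic structure under the affine substitution $z\mapsto (1-p_t)+p_t z$ that appears in the pruning formula \eqref{eqn:pruningG}. So the real content of the proof is a computation showing that $G(z)$ from \eqref{eqn:pruningG} equals $Q(z)$ again (giving $q$-invariance of the shape) while the edge-length parameter transforms as $\lambda \mapsto \lambda(1-Q'(1-p_t))$, which I must then identify with $\cE_t(\lambda)=\lambda p_t^{(1-q)/q}$.

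\textbf{Step 1.} First I would apply Lemma~\ref{lem:pruningGW} to $\mu=\mathcal{IGW}(q,\lambda)$. The lemma tells us immediately that $T^t=\{\cS_t(\varphi,T)\,|\,\cS_t(\varphi,T)\neq\phi\}$ is a Galton-Watson measure over $\cL^|$ with independent exponential edge lengths of parameter $\lambda(1-Q'(1-p_t))$ and generating function $G(z)$ given by \eqref{eqn:pruningG}, where $p_t=p_t(\lambda,\varphi)$.

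\textbf{Step 2.} Next I would compute $G(z)$ explicitly for $Q(z)=z+q(1-z)^{1/q}$. Writing $w=1-((1-p_t)+p_t z)=p_t(1-z)$, one has $Q((1-p_t)+p_tz)=(1-p_t)+p_tz+q\,w^{1/q}=(1-p_t)+p_tz+q\,p_t^{1/q}(1-z)^{1/q}$. Substituting into \eqref{eqn:pruningG}, the affine terms cancel against the $-(1-p_t)-p_tz$ in the numerator, leaving
\[
G(z)=z+\frac{q\,p_t^{1/q}(1-z)^{1/q}}{p_t(1-Q'(1-p_t))}.
\]
Separately, since $Q'(z)=1-(1-z)^{1/q-1}$, we get $1-Q'(1-p_t)=p_t^{(1-q)/q}=p_t^{1/q-1}$. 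Plugging this denominator in, the powers of $p_t$ combine as $p_t^{1/q}/(p_t\cdot p_t^{1/q-1})=p_t^{1/q-1-1/q+1}=p_t^{0}=1$, so $G(z)=z+q(1-z)^{1/q}=Q(z)$. This proves that $\textsc{shape}(T^t)\sim\mathcal{IGW}(q)$.

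\textbf{Step 3.} Finally I would identify the edge-length parameter. From Step 2, the new parameter from Lemma~\ref{lem:pruningGW} is $\lambda(1-Q'(1-p_t))=\lambda\,p_t^{(1-q)/q}$, which is exactly $\cE_t(\lambda)$. Combining Steps 2 and 3 with Definition~\ref{def:IGWqLambda} gives $T^t\sim\mathcal{IGW}(q,\cE_t(\lambda))$, as claimed. The main obstacle is genuinely only bookkeeping: one must be careful that the exponents of $p_t$ cancel exactly (this is precisely where the invariance of $q$ is forced, and any other $Q$ would fail), and one should note that $1-Q'(1-p_t)>0$ so the denominator and the rescaled rate are well defined. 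No analytic subtlety arises beyond the algebraic identity, since Lemma~\ref{lem:pruningGW} already handles the measure-theoretic structure and the criticality preservation.
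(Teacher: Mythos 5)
Your proposal is correct and follows essentially the same route as the paper: invoke Lemma~\ref{lem:pruningGW}, substitute $Q(z)=z+q(1-z)^{1/q}$ into \eqref{eqn:pruningG}, use $1-Q'(1-p_t)=p_t^{(1-q)/q}$ so the powers of $p_t$ cancel and $G(z)=Q(z)$, and read off the new rate $\lambda p_t^{(1-q)/q}=\cE_t(\lambda)$. The algebra checks out, and your added remarks (positivity of the denominator, that the cancellation is exactly what forces invariance) are harmless refinements of the paper's argument rather than a different approach.
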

\noindent
In other words, Prop.~\ref{prop:GDPinvarianceIGW} yields the invariance of $\mathcal{IGW}(q,\lambda)$ measure under generalized dynamical prunings $\S$.
For $\varphi(T) = {\sf ord}(T)-1$, Prop.~\ref{prop:GDPinvarianceIGW} yields the `if' part of Thm.~\ref{thm:completeGW}.
Next, we formulate the following uniqueness result.
\begin{lem}[{\bf Uniqueness of IGW measures}]\label{lem:onlyIGW} 
Consider a critical Galton-Watson tree measure $\mu\equiv\mathcal{GW}(\{q_k\},\lambda)$  ($q_1=0$) on $\cL^|$, and
let $T\stackrel{d}{\sim} \mu$.
Let $\varphi:\cL \rightarrow \mathbb{R}_+$ be a monotone non-decreasing function such that $\,p_t={\sf P}(\cS_t(\varphi,T) \not= \phi)\,$ is 
a decreasing function of $t$, mapping $[0,\infty)$ onto $(0,1]$. 
Then, $\mu$ is invariant under the operation of pruning $\S(T)=\S(\varphi,T)$ 
if and only if $\mu\equiv \mathcal{IGW}(q_0,\lambda)$.
\end{lem}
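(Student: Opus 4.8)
The plan is to reduce the invariance condition to a functional equation on the generating function $Q(z)$ and show that its only solutions are the IGW generating functions \eqref{eqn:completeGWQz}. The key tool is Lemma \ref{lem:pruningGW}, which tells us exactly what the pushforward measure looks like: for $\mu \equiv \mathcal{GW}(\{q_k\},\lambda)$ with generating function $Q(z)$, the conditioned pruned tree $\nu(T \in \cdot \,|\,T \neq \phi)$ is again a critical Galton-Watson measure $\mathcal{GW}(\{g_k\}, \lambda(1-Q'(1-p_t)))$ with generating function $G(z)$ given by \eqref{eqn:pruningG}. Invariance of $\mu$ under $\S(\varphi,\cdot)$ means precisely that $G(z) = Q(z)$ for all $t > 0$ (the edge-length parameter is allowed to change, so we only need the shape generating functions to agree). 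First I would write out the invariance requirement $G(z) = Q(z)$ explicitly, obtaining for every $t>0$ and $p = p_t \in (0,1)$ the identity
\begin{equation}\label{eqn:uniqFE}
Q\big((1-p)+pz\big)-(1-p)-pz = p\big(1-Q'(1-p)\big)\big(Q(z)-z\big).
\end{equation}

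The crucial hypothesis is that $p_t$ ranges over all of $(0,1)$ as $t$ ranges over $[0,\infty)$. This is what promotes the family of identities indexed by $t$ into a single functional equation \eqref{eqn:uniqFE} that must hold for \emph{all} $p \in (0,1)$ and all $z \in [0,1]$. The strategy is then to extract a differential equation. Using Proposition \ref{prop:Qminusx}, write $Q(z)-z = (1-z)^2 g(z)$; substituting the shifted argument $(1-p)+pz$ gives $(1-z)^2 p^2$ times $g$ evaluated at the shifted point, so after cancelling the common factor $(1-z)^2$ the functional equation \eqref{eqn:uniqFE} becomes a relation purely among $g$, $p$, and $Q'(1-p)$. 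Differentiating \eqref{eqn:uniqFE} in $z$ and then setting $z = 1$ (or alternatively expanding near $z=1$ and matching leading coefficients) should let me isolate how $Q'(1-p)$ depends on $p$, yielding an autonomous ODE for $Q$ along the diagonal. I expect this to force $1 - Q'(1-p)$ to be a pure power of $p$, which is exactly the structure that characterizes the critical Galton-Watson measures with generating function $Q(z) = z + q(1-z)^{1/q}$.

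The cleanest route, which I would pursue in parallel, is to differentiate \eqref{eqn:uniqFE} with respect to $p$ and evaluate at a convenient point; since the right-hand side is a product of a $p$-dependent scalar $c(p) := p(1-Q'(1-p))$ with the fixed function $Q(z)-z$, matching the two sides forces the scalar factor to satisfy a multiplicative (semigroup-like) relation $c(p_1 p_2)$-type identity. Concretely, composing two invariance steps with parameters $p$ and $p'$ and comparing with the single step of parameter $pp'$ shows $c(p)$ is multiplicative, hence $c(p) = p^{1/q}$ for some constant, which integrates to $Q(z) = z + q(1-z)^{1/q}$. The criticality constraint $Q(1)=1$, $Q'(1)=1$ fixes the normalization and pins down $q \in [1/2,1)$ (the range $[1/2,1)$ being forced by the requirement that the $q_k$ in \eqref{eqn:completeGWqk} be nonnegative probabilities). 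The converse direction — that each $\mathcal{IGW}(q_0,\lambda)$ is indeed invariant — is already supplied by Proposition \ref{prop:GDPinvarianceIGW}, so no separate argument is needed there.

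The main obstacle will be rigorously justifying the passage from the $t$-indexed family of identities to a genuine functional equation holding for a continuum of $p$ values: this is where the hypothesis that $p_t$ maps $[0,\infty)$ \emph{onto} $(0,1]$ is indispensable, and I must verify that $G(z)=Q(z)$ as \emph{polynomials/power series in} $z$ truly holds identically (not merely at countably many $p$) before differentiating. A secondary technical point is confirming that the multiplicative/semigroup structure of $c(p)$ can be extracted even though the pruning operator $\S(\varphi,\cdot)$ need not satisfy a semigroup property (as Examples \ref{ex:L} and \ref{ex:numL} emphasize); the resolution is that invariance is a statement about the one-parameter family of \emph{stationary} generating functions, so the composition argument operates on the generating-function level via \eqref{eqn:pruningG} rather than requiring $\S_t \circ \S_s = \S_{t+s}$ on trees.
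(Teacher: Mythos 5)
Your proposal is correct and, at its core, follows the same route as the paper: both arguments start from Lemma~\ref{lem:pruningGW}, use the hypothesis that $t\mapsto p_t$ maps $[0,\infty)$ onto $(0,1]$ to promote the invariance condition $G=Q$ into the functional equation
$Q\big((1-p)+pz\big)-(1-p)-pz = c(p)\big(Q(z)-z\big)$ with $c(p)=p\big(1-Q'(1-p)\big)$, valid for \emph{all} $p\in(0,1)$, and both then resolve this equation by a Cauchy-functional-equation argument. Where you differ is the mechanism for solving it. The paper sets $R(z)=\big(Q(z)-z\big)/q_0$, rewrites the equation as $R\big(p+(1-p)z\big)=R(z)R(p)$, substitutes $\ell(z)=\ln R(1-z)$ and $r(y)=\ell(e^y)$, and invokes the additive Cauchy equation on the half-line $(-\infty,0]$ to conclude $r(y)=\kappa y$. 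You instead compose the functional equation with itself: substituting $z\mapsto(1-p')+p'z$ and using $(1-p)+p\big((1-p')+p'z\big)=(1-pp')+pp'z$ gives $c(pp')=c(p)c(p')$ on $(0,1)$ (since $Q(z)-z\not\equiv 0$, as $q_0>0$), i.e., the multiplicative Cauchy equation for the scalar $c$, whence $c(p)=p^\kappa$ by continuity of $Q'$; evaluating the original equation at $z=0$ then yields $Q(1-p)-(1-p)=q_0\,c(p)$, so $Q(z)=z+q_0(1-z)^\kappa$. This is an equivalent reduction (the paper's $\ell$-equation is the same Cauchy structure after a logarithm), and your observation that the composition operates purely at the generating-function level --- so that no semigroup property of $\S_t$ on trees is needed --- is exactly the right resolution of the only plausible objection to this step.

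Two corrections of detail. First, discard your parallel ``ODE'' route: differentiating at $z=1$ is delicate since $Q''(1-)$ is infinite for the heavy-tailed members of the family; the composition route is the one that works, and it suffices on its own. Second, it is not criticality that pins down the exponent: for $Q(z)=z+q_0(1-z)^\kappa$ with $\kappa>1$, both $Q(1)=1$ and $Q'(1)=1$ hold automatically. What forces $\kappa=1/q_0$ is the standing hypothesis $q_1=0$, i.e., $Q'(0)=1-q_0\kappa=0$ --- exactly the final step in the paper's proof. With that fix (and nonnegativity of the $q_k$ restricting $q_0\in[1/2,1)$, as you note), your argument is complete; the converse direction is indeed supplied by Proposition~\ref{prop:GDPinvarianceIGW}.
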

\noindent
Notice that Lem.~\ref{lem:onlyIGW} does not imply the uniqueness result in Thm.~\ref{thm:completeGW}, which is valid under the regularity Asm.~\ref{asm:reg}.
Next, we list some examples where the assumptions of Lem.~\ref{lem:onlyIGW} are satisfied.
\begin{ex}
Let $\varphi(T)=\textsc{height}(T)$. Consider a critical Galton-Watson tree measure $\mu\equiv\mathcal{GW}(\{q_k\},\lambda)$  ($q_1=0$) on $\cL^|$, and
let $T\stackrel{d}{\sim} \mu$.
Then, $1-p_t=P_{1,0}(t)$ is the probability of extinction by time $t$ of the critical continuous time branching process. 
Since $P_{1,0}(t)$ is a continuous function of $t$, mapping $[0,\infty)$ onto $[0,1)$, Lemma \ref{lem:onlyIGW} implies 
$IGW(q,\lambda)$ is the only class of Galton-Watson measures that are invariant under the generalized dynamical pruning with $\varphi(T)=\textsc{height}(T)$.
\end{ex}

\begin{ex}
Let $\varphi(T)=\textsc{length}(T)$. Consider a critical Galton-Watson tree measure $\mu\equiv\mathcal{GW}(\{q_k\},\lambda)$  ($q_1=0$) on $\cL^|$, and
let $T\stackrel{d}{\sim} \mu$. Denote by $N$ the number of edges in $T$. Then, the density function of $\textsc{length}(T)$ can be expressed as
$\,\sum\limits_{k=1}^{\infty}P(N=k)f_{k,\lambda}(x)$,
where $f_{k,\lambda}(x)$ is a Gamma function 
$\,f_{k,\lambda}(x)={\lambda^k \over \Gamma(k)}x^{k-1}e^{-\lambda x}$.
Hence, the cumulative distribution function of $\textsc{length}(T)$, 
$${\sf P}(\textsc{length}(T) \leq t)=1-p_t,$$
 is a continuous function of $t$, mapping $[0,\infty)$ onto $[0,1)$.
Thus, by Lemma \ref{lem:onlyIGW}, 
$IGW(q,\lambda)$ is the only class of Galton-Watson measures invariant under the generalized dynamical pruning with $\varphi(T)=\textsc{height}(T)$.
\end{ex}

\medskip
\noindent
%Recall that for $\varphi(T)=\textsc{height}(T)$ it is known that the operator  $\cS_t(T)=\cS_t(\varphi,T)$ is a continuous semigroup, i.e., for all $T\in\cL^|$,
%$$\cS_s\circ \cS_t(T)=\cS_s\big(\cS_t(T)\big)=\cS_{t+s}(T) \qquad \forall s,t \geq 0.$$
Next, we check that Proposition~\ref{prop:GDPinvarianceIGW} and Theorem~\ref{thm:heightIGW} are consistent with this semigroup property
of the generalized dynamical pruning induced by $\varphi(T)=\textsc{height}(T)$ as in Example~\ref{ex:height}.
Indeed, for $T\stackrel{d}{\sim}\mathcal{IGW}(q,\lambda)$, Prop.~\ref{prop:GDPinvarianceIGW} yields
$$T^t:=\{\cS_t(\varphi,T)|\cS_t(\varphi,T) \not= \phi\} \stackrel{d}{\sim} \mathcal{IGW}\left(q,\,\cE _t(\lambda)\right),$$
where by Thm.~\ref{thm:heightIGW}, $\cE _t(\lambda)=\lambda p_t^{(1-q)/q}={\lambda \over \lambda(1-q) t+1}$.
Hence,
$$\cE_s \circ \cE_t (\lambda)=\cE_s \big(\cE_t (\lambda)\big)=\cE_{t+s} (\lambda),$$ 
thus reaffirming the semigroup property of $\cS_t$ for $\varphi(T)=\textsc{height}(T)$.

\subsection{Invariant Galton-Watson trees $\mathcal{IGW}(q)$ as attractors}\label{sec:Attractors}%%%%%%%%%%%%%%%%%%
The following result extends Theorem \ref{thm:IGWattractorHorton} to all generalized dynamical pruning operators $\S(T)=\S(\varphi,T)$.
\begin{thm}[{{\bf IGW attractors under generalized dynamical pruning}}]\label{thm:IGWattractor}
Consider a Galton-Watson measure $\mu \equiv\mathcal{GW}(\{q_k\}, \lambda)$ with $q_1=0$ on $\cL^|$.
Suppose the measure is critical and Assumption \ref{asm:reg} is satisfied.  
Then, for any random tree $T\in\cL^|$ distributed according to $\mu$, i.e., $\,T \stackrel{d}{\sim} \mu$,
$$\lim_{t\to\infty}{\sf P}\big(\textsc{shape}(\S(T))=\tau \,\big|\,\S(T)\not=\phi \big)=\mu^*(\tau), \qquad \text{ for all }\, \tau \in \cT^|,$$
where $\mu^*$ denotes the invariant Galton-Watson measure $\mathcal{IGW}(q)$ with $q={1 \over 2-L}$ and $L$ defined in \eqref{eqn:gLnLimL}.

Finally, suppose the Galton-Watson measure $\mu \equiv\mathcal{GW}(\{q_k\},\lambda)$ (with $q_1=0$) is subcritical, then 
for $\,T \stackrel{d}{\sim} \mu$, the distribution 
${\sf P}\big(\textsc{shape}(\S(T))=\cdot \,\big|\,\S(T)\not=\phi \big)$ converges to a point mass measure, $\mathcal{GW}(q_0\!=\!1)$.
\end{thm}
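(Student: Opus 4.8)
\textbf{The plan} is to reduce everything to a single application of Lemma~\ref{lem:pruningGW} followed by an analysis of the resulting generating function as the survival probability tends to $0$. By Lemma~\ref{lem:pruningGW}, for each fixed $t$ the conditional law ${\sf P}\big(\textsc{shape}(\S(T))\in\cdot \mid \S(T)\neq\phi\big)$ is exactly the shape distribution of a critical Galton--Watson measure $\mathcal{GW}(\{g_k^{(t)}\})$ whose generating function $G_t$ is given by \eqref{eqn:pruningG} with $p=p_t$, and criticality is preserved. Thus it suffices to show that $G_t$ converges pointwise on $[0,1)$ to the $\mathcal{IGW}(q)$ generating function \eqref{eqn:completeGWQz} with $q=1/(2-L)$, since convergence of probability generating functions forces convergence of each offspring probability $g_k^{(t)}\to q_k^\ast$, and the probability of any fixed shape $\tau\in\cT^|$ is a finite product of offspring probabilities, hence converges to $\mu^\ast(\tau)$. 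First I would record that $p_t\to 0$ as $t\to\infty$: since $\varphi$ is monotone nondecreasing and $\Delta_{x,T}\preceq T$, every descendant subtree satisfies $\varphi(\Delta_{x,T})\le\varphi(T)<\infty$ for a.e.\ finite $T$, so $\cS_t(\varphi,T)=\phi$ once $t>\varphi(T)$; as $\mathbf{1}\{\cS_t(\varphi,T)\neq\phi\}$ is nonincreasing in $t$ and vanishes pointwise, monotone convergence gives $p_t\downarrow 0$.

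The core computation rewrites $G_t$ using Proposition~\ref{prop:Qminusx}. Writing $Q(x)-x=(1-x)^2 g(x)$ and substituting $x=(1-p_t)+p_t z$, so that $1-x=p_t(1-z)$, the numerator in \eqref{eqn:pruningG} is exactly $Q(x)-x=p_t^2(1-z)^2 g\big(1-p_t(1-z)\big)$, whence
\[
G_t(z)-z=(1-z)^2\,\frac{g\big(1-p_t(1-z)\big)}{g(1-p_t)}\cdot\frac{p_t\,g(1-p_t)}{1-Q'(1-p_t)}.
\]
For the second factor I would invoke Lemma~\ref{lem:dSof1}: since $\frac{Q(x)-x}{(1-x)(1-Q'(x))}=\frac{(1-x)g(x)}{1-Q'(x)}\to\frac{1}{2-L}$, evaluating at $x=1-p_t$ gives $\frac{p_t\,g(1-p_t)}{1-Q'(1-p_t)}\to\frac{1}{2-L}$. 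The remaining factor is the ratio $g(1-p_t(1-z))/g(1-p_t)$, which I claim tends to $(1-z)^{-L}$; granting this, $G_t(z)\to z+\frac{1}{2-L}(1-z)^{2-L}=z+q(1-z)^{1/q}$ with $q=1/(2-L)$, as required.

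The main obstacle is exactly this ratio limit, i.e.\ upgrading the logarithmic asymptotic \eqref{eqn:gLnLimL} to genuine regular variation of $g$ at $1$. The resolution is to differentiate the identity $Q(x)-x=(1-x)^2g(x)$ to obtain $1-Q'(x)=(1-x)\big[2g(x)-(1-x)g'(x)\big]$, so that Lemma~\ref{lem:dSof1} becomes equivalent to $\frac{(1-x)g'(x)}{g(x)}\to L$. Setting $h(u)=g(1-u)$, which is eventually monotone as $g$ has nonnegative coefficients, this reads $\frac{u\,h'(u)}{h(u)}\to -L$ as $u\to 0^+$, and by Karamata's characterization (Appendix~\ref{appndx:Karamata}) this makes $h$ regularly varying at $0$ with index $-L$. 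Hence $\frac{h(c u)}{h(u)}\to c^{-L}$ for every fixed $c>0$; taking $c=1-z$ and $u=p_t\to 0$ yields $g(1-p_t(1-z))/g(1-p_t)\to(1-z)^{-L}$, completing the generating-function limit. This step is the analytic heart also underlying Theorem~\ref{thm:IGWattractorHorton}, to which the present result reduces when $\varphi={\sf ord}-1$, since the Horton semigroup property turns $k$ iterations into a single reduction with $p={\sf P}({\sf ord}(T)>k)\to 0$.

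Finally, for the subcritical case I would track only $g_0^{(t)}$. From \eqref{eqn:pruningpk}, $g_0^{(t)}=\frac{p_t\,g(1-p_t)}{1-Q'(1-p_t)}$, and since $m:=Q'(1^-)<1$ we have $1-Q'(1-p_t)\to 1-m>0$, while $Q(1-p_t)-(1-p_t)=p_t^2 g(1-p_t)\sim(1-m)p_t$ gives $p_t\,g(1-p_t)\to 1-m$; hence $g_0^{(t)}\to 1$. Thus the offspring law degenerates to the point mass at $0$, i.e.\ the conditional shape distribution converges to $\mathcal{GW}(q_0=1)$.
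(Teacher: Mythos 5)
Your proposal is correct, and its skeleton is the same as the paper's: Lemma~\ref{lem:pruningGW} reduces the theorem to a generating-function limit, the factorization $Q(x)-x=(1-x)^2g(x)$ of Proposition~\ref{prop:Qminusx} splits $G_t(z)-z$ into exactly the two factors you display, Lemma~\ref{lem:dSof1} supplies the constant $\frac{1}{2-L}$, and regular variation of $g$ at $1$ with index $L$ supplies $(1-z)^{-L}$ --- which is precisely the content of the paper's Lemmas~\ref{lem:dgLn}, \ref{lem:gRegVar}, and \ref{lem:IGWattractor}. The small divergences are all sound, and two of them improve on the paper: you obtain $(1-x)g'(x)/g(x)\to L$ by inverting the identity $\frac{Q(x)-x}{(1-x)(1-Q'(x))}=\left(2-\frac{(1-x)g'(x)}{g(x)}\right)^{-1}$, legitimate since the limit $\frac{1}{2-L}$ is finite and nonzero, where the paper's Lemma~\ref{lem:dgLn} argues via L'H\^opital; you prove $p_t\to 0$ explicitly from $\varphi(\Delta_{x,T})\le\varphi(T)<\infty$ on a.s.\ finite trees, a fact the paper uses tacitly when trading $\lim_{t\to\infty}$ for $\lim_{x\to 1-}$ with $x=1-p_t$; and in the subcritical case you read $g_0^{(t)}\to 1$ directly off \eqref{eqn:pruningpk} rather than passing through the generating-function limit $z+(1-z)=1$ of Lemma~\ref{lem:IGWattractor}. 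The one place to tighten is the regular-variation step: Appendix~\ref{appndx:Karamata} states only the integral-form converse Karamata theorem, so your appeal to a derivative-form ``Karamata characterization'' of $h(u)=g(1-u)$ is not literally covered by the cited statement. Either add the one-line bridge
$$\ln\frac{h(cu)}{h(u)}=\int\limits_u^{cu}\frac{sh'(s)}{h(s)}\,\frac{ds}{s}\;\longrightarrow\; -L\ln c \qquad (u\to 0+,\ c>0 \text{ fixed}),$$
which is valid because $g$ is a positive, differentiable power series near $1$, or reproduce the paper's Lemma~\ref{lem:gRegVar}, which derives the Karamata integral-ratio limit by L'H\^opital from Lemma~\ref{lem:dgLn} and then invokes Theorem~\ref{thm:convKaramata}. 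With that citation repaired, the argument is complete.
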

\noindent
Theorem \ref{thm:IGWattractor} is proved in Section \ref{sec:ProofsAttractors}.

\medskip
\noindent
Next two corollaries of Theorem~\ref{thm:IGWattractor} follow immediately from Lemmas~\ref{lem:2plusMoment} and \ref{lem:Zipf}.
\begin{cor}[{{\bf Attraction property of critical Galton-Watson trees of Zipf type}}]\label{cor:SZipfIGWattractor}
Consider a critical Galton-Watson process $\mu \equiv \mathcal{GW}(\{q_k\})$ with $q_1=0$, with offspring distribution $q_k$ of Zipf type, 
i.e., $q_k \sim C k^{-(\alpha+1)}$, with $\alpha \in (1,2]$ and $C>0$.
Then, for any a random tree $T\in\cL^|$ distributed according to $\mu$, i.e., $\,T \stackrel{d}{\sim} \mu$,
$$\lim_{t\to\infty}{\sf P}\big(\textsc{shape}(\S(T))=\tau \,\big|\,\S(T)\not=\phi \big)=\mu^*(\tau), \qquad \text{ for all }\, \tau \in \cT^|,$$
where $\mu^*$ is the invariant Galton-Watson measure $\mathcal{IGW}\left({1 \over \alpha}\right)$.
\end{cor}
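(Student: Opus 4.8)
The plan is to derive this corollary directly from Theorem~\ref{thm:IGWattractor}, using Lemma~\ref{lem:Zipf} to supply the required hypotheses and to pin down the value of the limiting exponent $L$. The entire argument is a matter of checking that the hypotheses of the general attractor theorem hold and then performing a single substitution, so there is essentially no new content beyond assembling the two cited results.

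First I would verify the hypotheses of Theorem~\ref{thm:IGWattractor}. The measure $\mu \equiv \mathcal{GW}(\{q_k\})$ is assumed critical with $q_1 = 0$, so the only nontrivial point is Assumption~\ref{asm:reg}. Since the offspring distribution is of Zipf type, $q_k \sim C k^{-(\alpha+1)}$ with $\alpha \in (1,2]$ and $C>0$, Lemma~\ref{lem:Zipf} applies verbatim: it guarantees that Assumption~\ref{asm:reg} is satisfied and, moreover, that the limit in \eqref{eqn:gLnLimL} equals $L = 2-\alpha$. With this in hand I would invoke Theorem~\ref{thm:IGWattractor}, which asserts that for $T \stackrel{d}{\sim} \mu$ and any $\tau \in \cT^|$,
$$\lim_{t\to\infty}{\sf P}\big(\textsc{shape}(\S(T))=\tau \,\big|\,\S(T)\not=\phi \big)=\mu^*(\tau),$$
where $\mu^*$ is the invariant Galton-Watson measure $\mathcal{IGW}(q)$ with $q = 1/(2-L)$. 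Substituting $L = 2-\alpha$ gives $q = 1/(2-(2-\alpha)) = 1/\alpha$, whence $\mu^* = \mathcal{IGW}(1/\alpha)$, exactly the claimed attractor.

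As a consistency check I would confirm that the resulting parameter lies in the admissible range: for $\alpha \in (1,2]$ we have $1/\alpha \in [1/2,1)$, matching the range $q \in [1/2,1)$ demanded of an $\mathcal{IGW}(q)$ measure in Definition~\ref{def:IGWq0} (with the endpoint $\alpha = 2$ giving the critical binary case $q = 1/2$). I would also note that the edge-length parameter $\lambda$ is immaterial here, since the convergence is stated for the tree shape $\textsc{shape}(\S(T))$ and the limit $\mu^*$ is correspondingly the shape measure $\mathcal{IGW}(1/\alpha)$ on $\cT^|$. Because every step merely cites an already-established statement, no genuine obstacle arises; the only care needed is the bookkeeping of the substitution $L = 2-\alpha$ and the verification that $q = 1/\alpha$ is a valid IGW parameter.
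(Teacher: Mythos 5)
Your proposal is correct and coincides with the paper's own argument: the paper derives this corollary exactly as you do, citing it as an immediate consequence of Theorem~\ref{thm:IGWattractor} together with Lemma~\ref{lem:Zipf}, which supplies Assumption~\ref{asm:reg} and $L=2-\alpha$, whence $q=1/(2-L)=1/\alpha$. Your additional sanity checks (that $1/\alpha\in[1/2,1)$ and that $\lambda$ plays no role for the shape measure) are fine but not part of the paper's one-line deduction.
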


\begin{cor}[{{\bf Attraction property of critical binary Galton-Watson tree, \cite{BWW00}}}]\label{BWW00_1}
Consider a critical Galton-Watson process $\mu \equiv \mathcal{GW}(\{q_k\})$ with $q_1=0$.
Assume one of the following two conditions holds.
\begin{description}
  \item[(a)] The second moment assumption is satisfied:
  $$\sum \limits_{k=2}^\infty k^2 q_k ~<\infty.$$
  \item[(b)] Assumption \ref{asm:reg} is satisfied, and the ``$2-$'' moment assumption is satisfied, i.e.,
  $$\sum \limits_{k=2}^\infty k^{2-\epsilon}q_k ~<\infty \qquad \forall \epsilon>0.$$
\end{description}
Then, for any a random tree $T\in\cL^|$ distributed according to $\mu$, i.e., $\,T \stackrel{d}{\sim} \mu$,
$$\lim_{t\to\infty}{\sf P}\big(\textsc{shape}(\S(T))=\tau \,\big|\,\S(T)\not=\phi \big)=\mu^*(\tau), \qquad \text{ for all }\, \tau \in \cT^|,$$
where $\mu^*$ is the critical binary Galton-Watson measure $\mathcal{IGW}(1/2)$.
\end{cor}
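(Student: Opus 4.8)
The plan is to deduce this corollary directly from Theorem~\ref{thm:IGWattractor}, whose conclusion identifies the attractor as $\mathcal{IGW}(q)$ with $q={1 \over 2-L}$ and $L$ the exponent from \eqref{eqn:gLnLimL}. Since $\mathcal{IGW}(1/2)$ is precisely the critical binary Galton--Watson measure (Def.~\ref{def:IGWq0}), it suffices to verify in each of the two cases (a) and (b) that Assumption~\ref{asm:reg} holds and that $L=0$; for then $q={1 \over 2-0}={1 \over 2}$, and $\mu^{*}=\mathcal{IGW}(1/2)$ is exactly the claimed critical binary Galton--Watson measure.

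For case (a), I would first observe that a finite second moment forces the ``$2-$'' moment condition \eqref{eqn:2plusMoment} of Lemma~\ref{lem:2plusMoment}: for every $\epsilon>0$ and every $k\geq 1$ one has $k^{2-\epsilon}\leq k^{2}$, whence $\sum_{k}k^{2-\epsilon}q_{k}\leq\sum_{k}k^{2}q_{k}<\infty$. Lemma~\ref{lem:2plusMoment} then yields $L=0$, while its ``moreover'' clause, invoked under the finite-second-moment hypothesis, supplies Assumption~\ref{asm:reg} (with limiting value ${1 \over 2}$). For case (b), both Assumption~\ref{asm:reg} and the ``$2-$'' moment condition are assumed outright, so Lemma~\ref{lem:2plusMoment} again gives $L=0$. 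In either case we have simultaneously established $L=0$ and the validity of Assumption~\ref{asm:reg}.

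Having secured these two facts, I would apply Theorem~\ref{thm:IGWattractor} to the critical measure $\mu$ to conclude that, for $T\stackrel{d}{\sim}\mu$,
$$\lim_{t\to\infty}{\sf P}\big(\textsc{shape}(\S(T))=\tau \,\big|\,\S(T)\not=\phi \big)=\mu^{*}(\tau), \qquad \text{for all }\tau\in\cT^{|},$$
where $\mu^{*}=\mathcal{IGW}(1/2)$, and then identify $\mathcal{IGW}(1/2)$ with the critical binary Galton--Watson measure to finish. There is no serious obstacle here, as the corollary is an immediate specialization of Theorem~\ref{thm:IGWattractor}; the only point requiring care is the hypothesis-checking in case (a), where one must notice that the single assumption $\sum_{k}k^{2}q_{k}<\infty$ delivers \emph{both} the vanishing of $L$ (through the weaker ``$2-$'' moment condition it implies) \emph{and} the regularity Assumption~\ref{asm:reg} (through the stronger ``moreover'' clause of Lemma~\ref{lem:2plusMoment}). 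In case (b) the regularity assumption is granted directly, so nothing beyond a single application of Lemma~\ref{lem:2plusMoment} is needed.
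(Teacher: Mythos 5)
Your proposal is correct and takes essentially the same route as the paper, which states this corollary as an immediate consequence of Theorem~\ref{thm:IGWattractor} via Lemma~\ref{lem:2plusMoment}: in case (a) the finite second moment yields both the ``$2-$'' condition (hence $L=0$) and Assumption~\ref{asm:reg} through the lemma's ``moreover'' clause, and in case (b) Assumption~\ref{asm:reg} is assumed while the lemma gives $L=0$, so $q={1\over 2-L}={1\over 2}$ exactly as you argue.
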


\medskip
\noindent
Next, we state a result for Bernoulli leaf coloring operator $\mathcal{C}_p$ (see Sect.~\ref{sec:coloring}), 
analogous to the one in Theorem~\ref{thm:IGWattractor}.
\begin{thm}[{{\bf IGW attractors under Bernoulli leaf coloring}}]\label{thm:coloringIGWattractor}
Consider a Galton-Watson measure $\mu \equiv\mathcal{GW}(\{q_k\})$ with $q_1=0$ on $\cT^|$.
Suppose the measure is critical and Assumption \ref{asm:reg} is satisfied.  
Then, for any a random tree $T\in\cT^|$ distributed according to $\mu$, i.e., $\,T \stackrel{d}{\sim} \mu$,
$$\lim_{p \to 1-}{\sf P}\big(\mathcal{C}_p(T)=\tau \,|\,\mathcal{C}_p(T)\not= \phi\big)=\mu^*(\tau), \qquad \text{ for all }\, \tau \in \cT^|,$$
where $\mu^*$ denotes the invariant Galton-Watson measure $\mathcal{IGW}(q)$ with $q={1 \over 2-L}$ and $L$ as defined in \eqref{eqn:gLnLimL}.

Suppose $\mu \equiv\mathcal{GW}(\{q_k\})$ (with $q_1=0$) is subcritical, then for $\,T \stackrel{d}{\sim} \mu$, the conditional distribution 
${\sf P}\big(\mathcal{C}_p(T)=\cdot \,|\,\mathcal{C}_p(T)\not= \phi\big)$ converges to a point mass measure, $\mathcal{GW}(q_0\!=\!1)$.
\end{thm}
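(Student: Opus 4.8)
The plan is to reduce the claim to an analysis of offspring generating functions, exactly paralleling the proof of Theorem~\ref{thm:IGWattractor}; the only difference is that the generating function of the reduced tree is now supplied by Theorem~\ref{thm:coloring} (leaf coloring) rather than by Lemma~\ref{lem:pruningGW} (dynamical pruning). By Theorem~\ref{thm:coloring}, for each $p\in[0,1)$ the conditional law ${\sf P}\big(\mathcal{C}_p(T)\in\cdot\mid\mathcal{C}_p(T)\neq\phi\big)$ is a Galton--Watson measure on $\cT^{|}$ with generating function $G_p$ as in \eqref{eqn:coloring}, expressed through the survival probability $g_p={\sf P}(\mathcal{C}_p(T)\neq\phi)$. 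The first step is to note that $g_p\to0$ as $p\to1-$: conditioning on the number of leaves $N=\textsc{leaves}(T)$ and retaining each leaf independently with probability $1-p$ gives $1-g_p={\sf E}[p^{N}]$, and since $N<\infty$ almost surely for a (sub)critical tree, dominated convergence yields $g_p\to0$. Everything then reduces to computing $\lim_{g_p\to0}G_p(z)$ for $z\in[0,1]$, because the continuity theorem for generating functions turns convergence of $G_p$ into convergence of the offspring probabilities $\{g_k\}$, while the $\mathcal{GW}$-mass of any fixed finite $\tau\in\cT^{|}$ is a fixed polynomial in finitely many $g_k$, which delivers the stated pointwise convergence of tree measures.

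For the critical case I would substitute $w=(1-g_p)+g_pz=1-g_p(1-z)$ into \eqref{eqn:coloring}, so that $G_p(z)=z+\frac{Q(w)-w}{g_p\big(1-Q'(1-g_p)\big)}$. By Proposition~\ref{prop:Qminusx} and $1-w=g_p(1-z)$ we have $Q(w)-w=g_p^2(1-z)^2\,g(w)$, hence
$$G_p(z)=z+(1-z)^2\,\frac{g_p\,g(w)}{1-Q'(1-g_p)}.$$
Rewriting the limit of Lemma~\ref{lem:dSof1} through the identity $\frac{Q(x)-x}{(1-x)(1-Q'(x))}=\frac{g(x)}{2g(x)-(1-x)g'(x)}$ shows that $\frac{(1-x)g'(x)}{g(x)}\to L$, which in turn gives both $1-Q'(1-g_p)\sim(2-L)\,g_p\,g(1-g_p)$ and, upon setting $u=-\ln(1-x)$ and $\gamma(u)=\ln g(1-e^{-u})$ so that $\gamma'(u)\to L$, the ratio limit
$$\frac{g(w)}{g(1-g_p)}=\exp\!\Big(\gamma\big(u_0-\ln(1-z)\big)-\gamma(u_0)\Big)\longrightarrow(1-z)^{-L},$$
where $u_0=-\ln g_p\to\infty$ and the exponent is the integral of $\gamma'$ over a window of fixed length $-\ln(1-z)$ receding to infinity. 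Since $q=1/(2-L)$, combining these yields $\lim_{p\to1-}G_p(z)=z+q(1-z)^{2-L}=z+q(1-z)^{1/q}$, which is the $\mathcal{IGW}(q)$ generating function \eqref{eqn:completeGWQz}.

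For the subcritical case the mean $Q'(1)={\sf E}[X]<1$, so no regularity is needed: here $Q(x)-x=(1-x)B(x)$ with $B(x)=\big(Q(x)-x\big)/(1-x)$ continuous up to $x=1$ and $B(1)=1-{\sf E}[X]>0$. The same substitution gives $Q(w)-w=g_p(1-z)B(w)$ and $G_p(z)=z+\frac{(1-z)B(w)}{1-Q'(1-g_p)}$; letting $g_p\to0$ (so $w\to1$) and using Abel continuity of $B$ and $Q'$ at $1$ yields $G_p(z)\to z+(1-z)=1$, the generating function of the point mass $\mathcal{GW}(q_0\!=\!1)$ carried by the single-edge tree. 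Together with the continuity theorem this establishes the asserted convergence.

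The main obstacle is the ratio limit $g\big(1-g_p(1-z)\big)/g(1-g_p)\to(1-z)^{-L}$ in the critical case. Mere existence of $L$ in \eqref{eqn:gLnLimL} is too weak, since it tolerates slowly varying oscillations that would destroy the ratio; the argument must genuinely promote Assumption~\ref{asm:reg} to the derivative statement $\frac{(1-x)g'(x)}{g(x)}\to L$ and then integrate $\gamma'$ over a fixed-length window sliding to infinity. This is precisely the analytic core already established in the proof of Theorem~\ref{thm:IGWattractor}; once it is available, the only genuinely new verifications for leaf coloring are that \eqref{eqn:coloring} has the same structural form as the pruning generating function and that $g_p\to0$ as $p\to1-$.
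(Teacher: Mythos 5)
Your proof is correct and follows essentially the same route as the paper: the paper's own proof is a two-line reduction, citing formula \eqref{eqn:coloring} of Theorem~\ref{thm:coloring} and then applying Lemma~\ref{lem:IGWattractor}, whose statement is exactly the generating-function limit you derive inline (including the identical subcritical computation). Two minor points of comparison. First, where the paper obtains the ratio limit $g\big(1-g_p(1-z)\big)/g(1-g_p)\to(1-z)^{-L}$ by proving regular variation of $g(1-1/y)$ via the converse Karamata theorem (Lemmas~\ref{lem:dgLn} and~\ref{lem:gRegVar}), you reach the same conclusion more directly by integrating $\gamma'(u)\to L$ over a sliding window of fixed length $-\ln(1-z)$; both arguments are valid, and yours is self-contained at the cost of re-deriving what Lemma~\ref{lem:IGWattractor} packages. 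Second, you supply two details the paper leaves implicit: the verification that $g_p\to 0$ as $p\to 1-$ (via $1-g_p={\sf E}[p^{N}]$ and $N<\infty$ a.s.), which is genuinely needed to translate the $p\to1-$ limit into the $x\to1-$ limit of Lemma~\ref{lem:IGWattractor}, and the continuity-theorem step converting convergence of generating functions into pointwise convergence of tree measures. Note also that you correctly read the argument of $Q$ in \eqref{eqn:coloring} as $(1-g_p)+g_pz$; the printed $(1-p)$ there is evidently a typo, as the paper's own application of Lemma~\ref{lem:IGWattractor} with $x=1-g_p$ confirms.
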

\noindent
Theorem \ref{thm:coloringIGWattractor} is proved in Section \ref{sec:ProofsAttractors}.

\medskip
\noindent
Finally, another result analogous to Theorem~\ref{thm:IGWattractor} can be obtained for iterative hereditary reductions (see Sect.~\ref{sec:DWreductions}).
\begin{thm}[{{\bf IGW attractors under generalized hereditary reductions}}]\label{thm:hrIGWattractor}
Consider a Galton-Watson measure $\mu \equiv\mathcal{GW}(\{q_k\}, \lambda)$ with $q_1=0$ on $\cL^|$.
Suppose the measure is critical and Assumption \ref{asm:reg} is satisfied. 
Let $T\in\cL^|$ be a random tree distributed according to $\mu$, and
let $H_1,\,H_2,\hdots$ be a sequence of hereditary properties satisfying
$$\lim\limits_{n \to \infty}{\sf P}\big(R_{H_n}\circ \hdots \circ R_{H_1}(T) \not= \phi \big)=0,$$%=\lim\limits_{n \to \infty}{\sf P}\big(R_{H_n\circ \hdots \circ H_1}(T) \not= \phi \big)
where $R_{H_1},\,R_{H_2},\hdots$ are the corresponding hereditary properties.
Then, for $\,T \stackrel{d}{\sim} \mu$,
$$\lim_{t\to\infty}{\sf P}\big(\textsc{shape}(\S(T))=\tau \,\big|\,\S(T)\not=\phi \big)=\mu^*(\tau), \qquad \text{ for all }\, \tau \in \cT^|,$$
where $\mu^*$ denotes the invariant Galton-Watson measure $\mathcal{IGW}(q)$ with $q={1 \over 2-L}$ and $L$ as defined in \eqref{eqn:gLnLimL}.

If $\mu$ is a subcritical Galton-Watson measure, then for $\,T \stackrel{d}{\sim} \mu$, the conditional distribution 
${\sf P}\big(\textsc{shape}(\S(T))=\cdot \,\big|\,\S(T)\not=\phi \big)$ converges to a point mass measure, $\mathcal{GW}(q_0\!=\!1)$.
\end{thm}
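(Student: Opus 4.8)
The plan is to reduce the iterated reduction to a single hereditary reduction and then identify the limiting generating function. First I would invoke the composition property of hereditary reductions (Lemma~2.13 of \cite{Winkel2012}): since $H_n\circ\cdots\circ H_1$ is again a hereditary property, the iterate $R_{H_n}\circ\cdots\circ R_{H_1}$ coincides with $R_{A_n}$ for $A_n=H_n\circ\cdots\circ H_1$. Applying Theorem~\ref{thm:hrGW} to the single reduction $R_{A_n}$ then shows that the conditional law of $\textsc{shape}\big(R_{A_n}(T)\big)$, given nonextinction, is the Galton--Watson measure $\mathcal{GW}(\{g_k^{(n)}\})$ whose generating function
$$G_n(z)=z+ {Q\big((1-p_n)+p_n z\big)-(1-p_n) -p_n z \over  p_n\big(1-Q'(1-p_n)\big)}$$
depends on the sequence only through $p_n={\sf P}\big(R_{A_n}(T)\neq\phi\big)$. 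By hypothesis $p_n\to 0$, so the theorem reduces to computing $\lim_{p_n\to 0+}G_n(z)$, exactly the asymptotic analysis underlying Theorem~\ref{thm:IGWattractor}.

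For the critical case I would substitute the factorization $Q(x)-x=(1-x)^2 g(x)$ from Proposition~\ref{prop:Qminusx} together with the identity obtained by differentiating it, $1-Q'(x)=(1-x)\big(2g(x)-(1-x)g'(x)\big)$. Writing $(1-p_n)+p_n z=1-p_n(1-z)$ and cancelling the common factor $p_n^2$ yields the clean form
$$G_n(z)=z+(1-z)^2\,{g\big(1-p_n(1-z)\big) \over 2g(1-p_n)-p_n\,g'(1-p_n)}.$$
Lemma~\ref{lem:dSof1}, via its identity $\lim_{x\to 1-}\frac{g(x)}{2g(x)-(1-x)g'(x)}=\frac{1}{2-L}$, gives $2g(x)-(1-x)g'(x)\sim(2-L)g(x)$ as $x\to 1-$, so the denominator is asymptotic to $(2-L)\,g(1-p_n)$.

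The main obstacle is controlling the numerator ratio $g\big(1-p_n(1-z)\big)/g(1-p_n)$. Here I would argue that the log-ratio limit \eqref{eqn:gLnLimL}, combined with the monotonicity of the coefficients of $g$ established in Proposition~\ref{prop:Qminusx} and Karamata's theorem (Appendix~\ref{appndx:Karamata}), forces $g$ to be regularly varying of index $-L$ at $1-$; writing $g(1-u)=u^{-L}\ell(u)$ with $\ell$ slowly varying at $0+$, this gives $g(1-p_n s)/g(1-p_n)\to s^{-L}$ and hence $g\big(1-p_n(1-z)\big)/g(1-p_n)\to(1-z)^{-L}$ for each fixed $z\in[0,1)$. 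Combining the two asymptotics yields
$$\lim_{n\to\infty}G_n(z)=z+{(1-z)^{2-L}\over 2-L}=z+q(1-z)^{1/q}, \qquad q={1\over 2-L},$$
which is precisely the $\mathcal{IGW}(q)$ generating function \eqref{eqn:completeGWQz}.

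Finally, I would dispatch the subcritical case by a first-order expansion: since $1-Q'(1)>0$, both the numerator and the denominator of $G_n$ are, to leading order, $p_n\big(1-Q'(1)\big)$ times $(1-z)$ and $1$ respectively, so $G_n(z)\to z+(1-z)=1$, the generating function of the point mass $\mathcal{GW}(q_0\!=\!1)$. In both regimes, pointwise convergence of $G_n$ on $[0,1)$ upgrades to convergence $g_k^{(n)}\to g_k^{*}$ of each offspring probability; because the $\mathcal{GW}$ probability of any fixed shape $\tau$ is a finite product of offspring probabilities, this gives $\mathcal{GW}(\{g_k^{(n)}\})(\tau)\to\mu^{*}(\tau)$ for every $\tau\in\cT^{|}$, completing the proof.
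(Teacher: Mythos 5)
Your proposal follows the same architecture as the paper's (very terse) proof: the composition property of hereditary reductions from \cite{Winkel2012} collapses the iterate into a single reduction $R_{A_n}$, Theorem~\ref{thm:hrGW} identifies the conditional law as Galton--Watson with generating function \eqref{eqn:hrGW} depending only on $p_n={\sf P}\big(R_{A_n}(T)\neq\phi\big)\to 0$, and the limit of that generating function as $p_n\to 0+$ is exactly the content of the paper's Lemma~\ref{lem:IGWattractor}, which the paper simply cites and you instead re-derive inline (the substitution $Q(x)-x=(1-x)^2g(x)$, the cancellation of $p_n^2$, and the asymptotics of numerator and denominator are precisely the computation in the paper's proof of that lemma). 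Your subcritical expansion matches the paper's, and your final step --- upgrading pointwise convergence of generating functions on $[0,1)$ to convergence of each $g_k^{(n)}$ and hence of the probability of each fixed shape, which is a finite product of offspring probabilities --- is correct and is a worthwhile explicit addition, since the paper leaves it implicit.

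The one step whose justification would fail as written is the regular-variation claim. The log-ratio limit \eqref{eqn:gLnLimL}, even combined with monotonicity, does not force $g(1-u)$ to be regularly varying of index $-L$ at $0+$: for instance $h(u)=u^{-L}e^{\epsilon\sin\ln(1/u)}$ with $0<\epsilon<L$ is monotone and satisfies $\ln h(u)/\ln(1/u)\to L$, yet $h(ru)/h(u)$ oscillates without a limit for fixed $r\neq 1$. What the regular variation actually requires is the full strength of Assumption~\ref{asm:reg}: existence of the limit \eqref{eqn:RegAsmQ} yields, via the identity $\frac{Q(x)-x}{(1-x)(1-Q'(x))}=\bigl(2-\frac{(1-x)g'(x)}{g(x)}\bigr)^{-1}$ and L'H\^opital, the derivative-ratio limit $\lim_{x\to 1-}(1-x)g'(x)/g(x)=L$ (the paper's Lemma~\ref{lem:dgLn}), and feeding this into the \emph{converse} Karamata theorem (Thm.~\ref{thm:convKaramata}, part (a), applied to $f(y)=g(1-1/y)$) gives the regular variation --- this is exactly the paper's Lemma~\ref{lem:gRegVar}. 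Since you already invoked the equivalent identity from Lemma~\ref{lem:dSof1} when handling the denominator, the repair is immediate and costs nothing; but as stated, ``log-limit plus monotone coefficients plus Karamata'' is not a valid derivation, and it is precisely to close this gap that the paper routes the argument through Lemmas~\ref{lem:dgLn} and \ref{lem:gRegVar} rather than through \eqref{eqn:gLnLimL} directly.
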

\noindent
Theorem \ref{thm:hrIGWattractor} is proved in Section \ref{sec:ProofsAttractors}

\section{Proofs}\label{sec:proofs}%%%%%%%%%%%%%%%

\subsection{Metric properties of invariant Galton-Watson trees}\label{sec:ProofsIGWproperties}%%%%%%%%%%%

\begin{proof}[Proof of Theorem~\ref{thm:heightIGW}]
Consider a tree $T\stackrel{d}{\sim}\mathcal{IGW}(q,\lambda)$. Let $X$ denote the length of the stem connecting the random tree's root $\rho$ to the root's only child vertex $v_0$.
Let $K={\sf br}(v_0)$ be the branching number of $v_0$, and let the $K$ subtrees branching out of $v_0$ be denoted by $T_i, ~1 \le i \le K$. 
Let $H(x)$ be the cumulative distribution function for the height of $T$. Then, for each subtree $T_i\stackrel{d}{\sim}\mathcal{IGW}(q,\lambda)$, 
its height $\textsc{height}(T_i)$ has the same cumulative distribution function $H(x)$. The number of subtrees $K\stackrel{d}{\sim} q_k$ has generating function $Q(z)=z+q(1-z)^{1/q}$. 
Let $M(x)$ denote the cumulative distribution function of 
$\,\max\limits_{1 \le i \le K}\{\textsc{height}(T_i)\}$, then 
\begin{align}\label{eqn:QoH}
M(x)&=P\big(\max\limits_{1 \le i \le K}\{\textsc{height}(T_i)\} \leq x\big)=\sum\limits_{k=0}^\infty q_k P\big(\max\limits_{1 \le i \le K}\{\textsc{height}(T_i)\} \leq x \,\big|\, K=k\big) \nonumber \\
&=\sum\limits_{k=0}^\infty q_k P\big(\textsc{height}(T) \leq x\big)^k=\sum\limits_{k=0}^\infty q_k \big(H(x)\big)^k \nonumber \\
&=(Q\circ H)(x)=H(x)+q\big(1-H(x) \big)^{1/q}.
\end{align}
The stem length $X$ is an exponentially distributed random variable with parameter $\lambda$, and density function $\varphi_\lambda(x)=\lambda \exp\{-\lambda x\} {\bf 1}_{x \geq 0}$. 
Since, $\,\textsc{height}(T)=X+\max\limits_{1 \le i \le K}\{\textsc{height}(T_i)\}$, we have
\be\label{eqn:convPhiM}
H(x) = \varphi_\lambda \ast M(x).
\ee
We will use the following notations: let $\widehat{g}(t)=\int\limits_{-\infty}^\infty e^{itx} g(x) \,dx$ denote the Fourier transform of $g(x)$. 
Equations \eqref{eqn:QoH} and \eqref{eqn:convPhiM} yield
$$H(x)=\varphi_\lambda \ast (Q\circ H)(x).$$
Taking Fourier transform, we obtain
$$\widehat{H}(t)={\lambda \over \lambda-it}\, \left(\widehat{H}(t)+q\widehat{\big(1-H \big)^{1/q}}(t)\right),$$
which simplifies as
$$it\widehat{H}(t)+\lambda q \widehat{\big(1-H \big)^{1/q}}(t)=0,$$
where 
$$\widehat{\big(1-H \big)^{1/q}}(t)=\int\limits_{-\infty}^\infty e^{itx} \big(1-H(x) \big)^{1/q} \,dx.$$
Therefore,
\be\label{eqn:ieqnHhat}
\int\limits_{-\infty}^\infty e^{itx} \left(itH(x)+\lambda q\big(1-H(x) \big)^{1/q}\right) \,dx=0 \qquad \forall t \in \mathbb{R},
\ee
where integration by parts yields
\be\label{eqn:ipartH}
\int\limits_{-\infty}^\infty e^{itx}it H(x)\,dx=-\int\limits_{-\infty}^\infty e^{itx}H'(x)\,dx.
\ee
Substituting \eqref{eqn:ipartH} back into \eqref{eqn:ieqnHhat} yields
$$\int\limits_{-\infty}^\infty e^{itx} \left(H'(x)-\lambda q\big(1-H(x) \big)^{1/q}\right) \,dx=0 \qquad \forall t \in \mathbb{R},$$
which, by Parseval's equation implies the following ODE
\begin{equation}\label{eqn:fODE}
H'(x)=\lambda q\big(1-H(x) \big)^{1/q}.
\end{equation}

\medskip
\noindent
Next, differential equation \eqref{eqn:fODE} above via integration, obtaining
\begin{equation}\label{eqn:solODE}
H(x)=1-\big((\lambda x+C)(1-q)\big)^{-{q \over 1-q}},
\end{equation}
where $C$ is a scalar.
Since $H(x)$ is a cumulative distribution function of a positive random variable $\textsc{height}(T)$, we have $H(0)=0$, implying $C={1 \over 1-q}$.
Thus, for $q \in \left[{1 \over 2},1\right)$,
$$H(x)=1-\left(\left(\lambda x+\frac{1}{1-q}\right)(1-q)\right)^{-{q \over 1-q}}=1-\big(\lambda(1-q) x+1\big)^{-q/(1-q)}.$$
\end{proof}

\bigskip
\noindent
Next, we use the following application of the Lagrange Inversion Theorem (Thm.~\ref{thm:LIT}).
\begin{lem}\label{lem:LITapp}
Let $q \in [1/2,1)$ be given.
Suppose $W=W(z)$ is an analytic function satisfying equation 
$$z={W \over (1-W)^{1/q}}$$
in a neighborhood of the origin, where we take $-\pi< \arg(z)<\pi$ branch of the function $z^{1/q}$. 
Then, for $z$ near the origin, we have
\be\label{eqn:LITapp}
W=\sum_{n=1}^{\infty}(-1)^{n-1}\frac{\Gamma(n/q+1)}{n!\,\Gamma(n/q-n+2)}z^n.
\ee
\end{lem}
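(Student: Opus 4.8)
The plan is to apply the Lagrange Inversion Theorem (Thm.~\ref{thm:LIT}) directly to the functional equation defining $W$. First I would rewrite the equation $z = W/(1-W)^{1/q}$ in the standard form for Lagrange inversion. Observe that $W$ satisfies $W = z \cdot (1-W)^{1/q}$, so setting $\phi(w) = (1-w)^{1/q}$, we have $W = z\,\phi(W)$ with $\phi(0) = 1 \ne 0$ and $\phi$ analytic near the origin. This is precisely the hypothesis needed to expand $W$ as a power series in $z$ via Lagrange inversion.

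The Lagrange Inversion Theorem then gives the coefficient of $z^n$ in $W$ as
\[
[z^n]\,W = \frac{1}{n}[w^{n-1}]\,\phi(w)^n = \frac{1}{n}[w^{n-1}]\,(1-w)^{n/q}.
\]
So the next step is to extract the coefficient $[w^{n-1}](1-w)^{n/q}$ from the binomial series. Using the generalized binomial expansion,
\[
(1-w)^{n/q} = \sum_{j=0}^{\infty}\binom{n/q}{j}(-w)^j,
\]
the coefficient of $w^{n-1}$ is $(-1)^{n-1}\binom{n/q}{n-1}$. Hence
\[
[z^n]\,W = \frac{(-1)^{n-1}}{n}\binom{n/q}{n-1}.
\]

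The remaining step is purely computational: rewrite $\frac{1}{n}\binom{n/q}{n-1}$ in terms of Gamma functions to match the stated form. Writing the binomial coefficient as
\[
\binom{n/q}{n-1} = \frac{\Gamma(n/q+1)}{\Gamma(n)\,\Gamma(n/q-n+2)},
\]
and using $\Gamma(n) = (n-1)!$ together with $n\cdot(n-1)! = n!$, one obtains
\[
[z^n]\,W = (-1)^{n-1}\frac{\Gamma(n/q+1)}{n!\,\Gamma(n/q-n+2)},
\]
which is exactly \eqref{eqn:LITapp}.

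The main thing to verify carefully is the analytic validity of the Lagrange inversion: one must confirm that $\phi(w)=(1-w)^{1/q}$ is analytic in a neighborhood of $w=0$ (which holds since we take the principal $-\pi<\arg(z)<\pi$ branch and $1-w$ stays near $1$), and that the resulting series has positive radius of convergence so that $W(z)$ is genuinely the unique analytic solution vanishing at the origin. Since $\phi(0)=1\ne 0$, the hypotheses of Thm.~\ref{thm:LIT} are met and uniqueness of the analytic branch with $W(0)=0$ follows from the implicit function theorem. The coefficient extraction and the Gamma-function bookkeeping are then routine.
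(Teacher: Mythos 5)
Your proof is correct and takes essentially the same route as the paper: both apply the Lagrange Inversion Theorem to $f(w)=w/(1-w)^{1/q}$ (equivalently $W=z\,\phi(W)$ with $\phi(w)=(1-w)^{1/q}$, $\phi(0)=1\neq 0$) and reduce the problem to the coefficient of $w^{n-1}$ in $(1-w)^{n/q}$. Your coefficient-extraction form $[z^n]W=\frac{1}{n}[w^{n-1}]\phi(w)^n$ combined with the generalized binomial series is just a rephrasing of the paper's computation of $\frac{d^{n-1}}{dw^{n-1}}(1-w)^{n/q}\big|_{w=0}$, and your Gamma-function bookkeeping $\frac{1}{n}\binom{n/q}{n-1}=\frac{\Gamma(n/q+1)}{n!\,\Gamma(n/q-n+2)}$ matches the paper's formula exactly.
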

\noindent
Observe that the conclusion of Lemma~\ref{lem:LITapp} also applies in a real-valued setting, under the assumption 
of infinite differentiability of $W:\mathbb{R} \to \mathbb{R}$. Here, if $z={W \over (1-W)^{1/q}}$ for $z \in \mathbb{R}$
in a neighborhood of the origin on the real line, then the power series expansion \eqref{eqn:LITapp} holds in proximity to $0$.
\begin{proof}[Proof of Lemma~\ref{lem:LITapp}]
We notice that function $f(w)={w \over (1-w)^{1/q}}$ is analytic at $w=0$, and $f'(0)=1 ~\not=0$.
Thus, we can apply the Lagrange Inversion Theorem (Thm.~\ref{thm:LIT}) to express $\Lambda$ in terms of $z$ power series.  
Now, since
$$\left({w \over f(w)}\right)^n=(1-w)^{n/q},$$
we have
$$\frac{d^{n-1}}{dw^{n-1}}\left({w \over f(w)}\right)^{\!\!n}\!\Bigg|_{\Lambda=0}=(-1)^{n-1}(n/q)(n/q-1)\hdots(n/q-n+2)
=(-1)^{n-1}\frac{\Gamma(n/q+1)}{\Gamma(n/q-n+2)}$$
Therefore, by the Lagrange Inversion Theorem (Thm.~\ref{thm:LIT}), we obtain
$$W=\sum_{n=1}^{\infty}{z^n \over n!} \left[{d^{n-1} \over dw^{n-1}}\left({w \over f(w)}\right)^{\!\!n}\right]_{w=0}
=\sum_{n=1}^{\infty}(-1)^{n-1}\frac{\Gamma(n/q+1)}{n!\,\Gamma(n/q-n+2)}z^n.$$
\end{proof}

\bigskip
\begin{proof}[Proof of Theorem~\ref{thm:TreeLD}] 
Consider a tree $T\stackrel{d}{\sim}\mathcal{IGW}(q,\lambda)$ consisting of a stem of length $X$ that connects the root $\rho$ to its child vertex $v_0$, and $K={\sf br}(v_0)$ subtrees $T_i,~1 \le i \le K$ branching out from $v_0$. Let $\ell(x)$ be the density function of length of $T$. Notice that the length of each subtree $T_i$ is also $\ell(x)$ distributed. Random variable $K\stackrel{d}{\sim} q_k$ has generating function $Q(z)=z+q(1-z)^{1/q}$. 
Letting $N(x)$ denote the probability density function of $\,\sum\limits_{1 \le i \le K}\{\textsc{length}(T_i)\}$, we have 
\be\label{eqn:Nell}
N(x)=\sum_{k=0}^\infty q_k \ell_k(x), \quad\text{ where }~\ell_k(x)=\underbrace{\ell \ast  \hdots \ast \ell}_{k \text{ times}}(x).
\ee
Observe that $\,\textsc{length}(T)=X+\sum\limits_{1 \le i \le K}\{\textsc{length}(T_i)\}$, where
$X$ has exponential p.d.f. $\varphi_\lambda(x)=\lambda \exp\{-\lambda x\} {\bf 1}_{x \geq 0}$. 
Thus, $\ell(x)$ can be represent as the following convolution
\be\label{eqn:convPhiN}
\ell(x) = \varphi_\lambda \ast N(x).
\ee
Let for $t \geq 0$, function $\cL[g](t)=\int\limits_0^\infty e^{-tx} g(x) \,dx$  denote the Laplace transform $g$.
Then, \eqref{eqn:Nell} and \eqref{eqn:convPhiN} imply
$$\cL[\ell](t)=\cL[\varphi_\lambda](t) \,\cL[N](t)=\cL[\varphi_\lambda](t) \,Q\big(\cL[\ell](t)\big)={\lambda \over \lambda+t} \left(\cL[\ell](t)+q\big(1-\cL[\ell](t) \big)^{1/q}\right),$$
which simplifies as
\be\label{eqn:Lellqt}
t\cL[\ell](t)=\lambda q\big(1-\cL[\ell](t) \big)^{1/q}.
\ee

\bigskip
\noindent
Letting $\,z={\lambda q \over t}$ and $\Lambda=\cL[\ell]\left({\lambda q \over z}\right)=\cL[\ell](t)$, we have
$$z={\Lambda \over (1-\Lambda)^{1/q}}.$$
Then, Lemma~\ref{lem:LITapp} yields
$$\cL[\ell](t)=\Lambda=\sum_{n=1}^{\infty}(-1)^{n-1}{\Gamma(n/q+1) \over n!\, \Gamma(n/q-n+2)}\frac{(\lambda q)^n}{t^n}$$
Finally, we invert the Laplace transform $\cL[\ell](t)$, obtaining
$$\ell(x)=\sum_{n=1}^{\infty}(-1)^{n-1} {\Gamma(\alpha n+1) \over n! \,(n-1)! \,\Gamma(\alpha n-n+2)}(\lambda q)^n x^{n-1}.$$
\end{proof}

\begin{proof}[Proof of Proposition~\ref{prop:TreeLtail}]
Observe that
\begin{align*}
1-\cL[\ell](t) &= \int\limits_0^\infty (1-e^{-tx})\,\ell(x) \,dx ~= t \int\limits_0^\infty  \int\limits_0^x e^{-ty} \,\ell(x) \,dy\,dx \\
&= t \int\limits_0^\infty  e^{-ty} \, \int\limits_y^\infty \ell(x) \,dx\,dy ~= t \int\limits_0^\infty  e^{-ty} \, \big(1-L(y)\big)\,dy
~=t\,\cL[1\!-\!L](t).
\end{align*}
Thus, by \eqref{eqn:Lellqt}, we have
$$t\cL[\ell](t)=\lambda q\big(1-\cL[\ell](t) \big)^{1/q}=\lambda q \,t^{1/q}\,\big(\cL[1\!-\!L](t) \big)^{1/q},$$
and therefore,
$$\cL[1\!-\!L](t)={1 \over t^{1-q}} {\big(\cL[\ell](t)\big)^q \over (\lambda q)^q}, 
\quad \text{ where }\quad \lim\limits_{t\to 0+}{\big(\cL[\ell](t)\big)^q \over (\lambda q)^q}={1 \over (\lambda q)^q}.$$
Hence, by the Hardy-Littlewood-Karamata Tauberian Theorem for Laplace transforms \cite{FellerII},
$$1-L(x) \sim {1 \over (\lambda q)^q \,\Gamma(1-q)}x^{-q}.$$
\end{proof}

\bigskip
\begin{proof}[Proof of Theorem~\ref{thm:edgesIGWq}] 
Observe that in a reduced tree $T \in \cT^| \setminus \{\phi\}$, the number of edges equals one plus the number of edges in all subtrees 
splitting from the stem. Therefore, 
$$\alpha(n+1)=\sum_{k=n}^\infty q_k\, \underbrace{\alpha \ast  \hdots \ast \alpha}_{k \text{ times}}(n), \qquad n=0,1,\hdots.$$
Therefore, the generating function $\,a(z)=\sum\limits_{n=1}^\infty z^n \,\alpha(n)\,$ satisfies $\,a(z)=z\,Q\big(a(z)\big)$.
Hence,
\be\label{eqn:vzq}
a(z)=z\,\Big(a(z)+q\big(1-a(z)\big)^{1/q}\Big)
\ee
and therefore,
$$w={a \over (1-a)^{1/q}}, \qquad \text{ where }~a=a(z) ~~~\text{ and }~~w={q z \over 1-z}.$$

\medskip
\noindent
Lemma~\ref{lem:LITapp} yields 
\begin{align*}
a(z)&=\sum_{k=1}^{\infty} \sum\limits_{n=k}^\infty (-1)^{k-1} {n-1 \choose k-1}{\Gamma(k/q+1)\over k!\,\Gamma(k/q -k+2)}\,q^k z^n \\
&=\sum\limits_{n=1}^\infty z^n \sum_{k=1}^n  (-1)^{k-1} {n-1 \choose k-1}{\Gamma(k/q+1)\over k!\,\Gamma(k/q -k+2)}\,q^k.
\end{align*}
Thus, since $\,a(z)=\sum\limits_{n=1}^\infty  z^n \,\alpha(n)$, equation \eqref{eqn:edgesIGWq} follows.
Finally, the cumulative distribution function equals
\begin{align*}
\mathcal{A}(x)&=\sum\limits_{n=1}^{\lfloor x \rfloor}  \alpha(n) ~=\sum\limits_{n=1}^{\lfloor x \rfloor} \sum_{k=1}^n  (-1)^{k-1} {n-1 \choose k-1}{\Gamma(k/q+1)\over k!\,\Gamma(k/q -k+2)}\,q^k\\
&=\sum_{k=1}^{\lfloor x \rfloor}(-1)^{k-1}  \left(\sum\limits_{n=k}^{\lfloor x \rfloor}  {n-1 \choose k-1}\right){\Gamma(k/q+1)\over k!\,\Gamma(k/q -k+2)}\,q^k\\
&=\sum\limits_{k=1}^{\lfloor x \rfloor} (-1)^{k-1} {\lfloor x \rfloor \choose k}{\Gamma(k/q+1) \over k!\,\Gamma(k/q -k+2)}\,q^k
\end{align*}
for all real $\,x \geq 1$, and \eqref{eqn:CDFedgesIGWq} holds.
\end{proof}

\subsection{Invariance under generalized dynamical pruning}\label{sec:ProofsGDP}%%%%%%%%%%%

\begin{proof}[Proof of Proposition~\ref{prop:GDPinvarianceIGW}]
For $q \in [1/2,1)$ and $Q(z)=z+q(1-z)^{1/q}$, equation \eqref{eqn:pruningG} in Lemma~\ref{lem:pruningGW} implies
\begin{align*}%\label{eqn:pruningGinIGW}
G(z)&=z+{Q\big(1-p_t+p_t z\big)-(1-p_t)-zp_t \over p_t\big(1-Q'(1-p_t)\big)}\\
&=z+p_t^{-1/q}\left(Q\big(z+(1-z)(1-p_t)\big)-(1-p_t) -zp_t \right)\\
&=z+p_t^{-1/q}q p_t^{1/q}(1-z)^{1/q} ~=Q(z).
\end{align*}
The rest of the proof follows from Lemma~\ref{lem:pruningGW} as 
\be\label{eqn:pruningLambda}
\lambda \big(1-Q'(1-p_t)\big)=\lambda p_t^{(1-q)/q}.
\ee
yielding $\, \S(T) \stackrel{d}{\sim} \mathcal{IGW}\big(q,\lambda p_t^{(1-q)/q}\big)$.
\end{proof}

\begin{proof}[Proof of Lemma~\ref{lem:onlyIGW}]
From Lemma~\ref{lem:pruningGW}, we have
$$q_0^{(1)}=\frac{Q(p_t)-p_t}{(1-p_t)\left(1-Q'(p_t)\right)}\quad \text{ and }\quad
G(z)=z+{Q\big(p_t+(1-p_t)z\big)-p_t-z(1-p_t) \over (1-p_t)\big(1-Q'(p_t)\big)}.$$
Combining the above together yields 
$$G(z)=z+q_0^{(1)}{Q(p_t+(1-p_t)z)-(p_t+(1-p_t)z) \over Q(p_t)-p_t}$$
Suppose $\mu$ is invariant under the operation of pruning $\S(T)=\S(\varphi,T)$, then $G(z)=Q(z)$ and $q_0^{(1)}=q_0$,
implying
\be\label{eqn:QQQ}
Q(z)=z+q_0{Q(p_t+(1-p_t)z)-(p_t+(1-p_t)z) \over Q(p_t)-p_t}.
\ee
%$${Q(z)-z \over q_0}={{Q(p_t+(1-p_t)z)-(p_t+(1-p_t)z) \over q_0} \over {Q(p_t)-p_t \over q_0}}$$
Let $R(z)=\frac{Q(z)-z}{q_0}$, then equation \eqref{eqn:QQQ} rewrites as $R(z)=\frac{R(p_t+(1-p_t)z)}{R(p_t)}$.
Thus, for $\ell(z)=\ln(R(1-z))$, we have $\ell(1-z)+\ell(1-p_t)=\ell\big((1-p_t) (1-z)\big)$ as $p_t+(1-p_t)z=1-(1-p_t)(1-z)$. 

\medskip
\noindent
Therefore, $\,\ell\big((1-p_t) x\big)=\ell(x)+\ell(1-p_t)$. Let $r(y)=\ell\big(e^y\big)$, then 
\begin{equation}\label{eqn:A}
r(y+\varepsilon_t)=r(y)+r(\varepsilon_t) \qquad \forall t\geq 0,
\end{equation}
where $\varepsilon_t=\ln(1-p_t)$. Here $r(0)=\ln R(0) =0$.

\medskip
\noindent
We notice that the domain of $r(y)$ is $y\in (-\infty,0]$, and
$$\{\varepsilon_t \,:\, t \in [0,\infty)\}= (-\infty,0]$$ 
as $1-p_t$ is an increasing function of $t$, mapping $[0,\infty)$ onto $[0,1)$.
Hence, equation \eqref{eqn:A} implies the following Cauchy's Functional Equation
\begin{equation}\label{eqn:B}
r(y+\varepsilon)=r(y)+r(\varepsilon) \qquad \forall y,\varepsilon \in (-\infty,0].
\end{equation}
The general Cauchy's Functional Equation states that assuming 
\begin{itemize}
  \item continuity: $f(x) \in C(\mathbb{R})$,
  \item additivity: $\,f(x+y)=f(x)+f(y)$ for all $x,y \in \mathbb{R}$,
\end{itemize}
the function $f(x)=cx$ for some $c\in \mathbb{R}$.
Notice that \eqref{eqn:B} is a sub-case of the general Cauchy's Functional Equation restricted to a half-line, and therefore has the same linear solution
and the same proof. Thus, \eqref{eqn:B} yields that $r(y)=\kappa y$ for some constant $\kappa$.

\medskip
\noindent
Thus, we have $\ell(x)=\kappa\ln(x)$,
$$\kappa\ln(1-z)=\ell(1-z)=\ln R(z)=\ln\left(\frac{Q(z)-z}{q_0}\right)$$
and
$$Q(z)=z+q_0(1-z)^\kappa$$
Finally, $\,q_1=0\,$ yields $\,Q'(0)=0$. Therefore,  $\,Q'(z)=1-q_0 \kappa(1-z)^{\kappa-1}\,$ implies $\,\kappa={1 \over q_0}$.
\end{proof}

\subsection{Invariant Galton-Watson trees $\mathcal{IGW}(q)$ as attractors}\label{sec:ProofsAttractors}%%%%%%%%%%%%%%%%%%

First we prove the following result, related to Lemma \ref{lem:dSof1}.
\begin{lem}\label{lem:dgLn}
Consider a critical Galton-Watson measure $\mathcal{GW}(\{q_k\})$ with $q_1=0$.
If Assumption \ref{asm:reg} is satisfied, then for $g(x)$ defined in \eqref{eqn:gxDef} 
the following limit 
\be\label{eqn:dgLnLimL}
\lim\limits_{x \rightarrow 1-}{(1-x)g'(x) \over g(x)}
\ee
exists and and is equal to the limit $L$, defined in \eqref{eqn:gLnLimL}.
\end{lem}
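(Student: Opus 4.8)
The plan is to bypass any direct asymptotic analysis of $g$ and instead extract the limit from an \emph{exact} identity obtained by differentiating the relation of Proposition~\ref{prop:Qminusx}. The point is that the quantity in \eqref{eqn:dgLnLimL} is a logarithmic derivative, $\frac{(1-x)g'(x)}{g(x)}=(1-x)\frac{d}{dx}\ln g(x)$, and its only genuinely delicate feature — the mere \emph{existence} of the limit — will be inherited for free from Lemma~\ref{lem:dSof1} once the right identity is in place. Before differentiating I would record that $g(x)=\sum_{m\ge0}c_m x^m$ with $c_m={\sf E}[(X-m-1)_+]$ nonnegative, nonincreasing in $m$, and bounded (indeed $c_0={\sf E}[(X-1)_+]=q_0\le1$, using criticality ${\sf E}[X]=1$); hence the radius of convergence of $g$ is at least $1$, so on $(0,1)$ the function $g$ is differentiable with $g'(x)\ge0$ and $g(x)>0$, and term-by-term differentiation of $Q(x)-x=(1-x)^2g(x)$ is justified for real $x$ near $1$.

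First I would differentiate $Q(x)-x=(1-x)^2g(x)$, which gives $Q'(x)-1=-2(1-x)g(x)+(1-x)^2g'(x)$, hence
$$1-Q'(x)=(1-x)\bigl(2g(x)-(1-x)g'(x)\bigr).$$
Dividing by $(1-x)g(x)$, which is legitimate since $g(x)>0$ for $x$ near $1$, yields the exact identity
$$\frac{1-Q'(x)}{(1-x)g(x)}=2-\frac{(1-x)g'(x)}{g(x)},$$
valid for all $x$ sufficiently close to $1$.

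It then remains to compute the limit of the left-hand side. Using Proposition~\ref{prop:Qminusx} once more, $\frac{Q(x)-x}{(1-x)(1-Q'(x))}=\frac{(1-x)g(x)}{1-Q'(x)}$, so Lemma~\ref{lem:dSof1} reads $\lim_{x\to1-}\frac{(1-x)g(x)}{1-Q'(x)}=\frac{1}{2-L}$. Because $1-Q'(x)\ge0$, $1-x>0$, and $g(x)>0$, this ratio is nonnegative, so its limit $\frac{1}{2-L}$ is finite and strictly positive; inverting, $\lim_{x\to1-}\frac{1-Q'(x)}{(1-x)g(x)}=2-L$. Substituting into the displayed identity, the left-hand side converges to $2-L$, hence so does the right-hand side, and therefore $\frac{(1-x)g'(x)}{g(x)}=2-\frac{1-Q'(x)}{(1-x)g(x)}$ converges, with limit $2-(2-L)=L$, as claimed.

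I would emphasize that the step one might expect to be the main obstacle — proving that the limit \eqref{eqn:dgLnLimL} \emph{exists} at all — never has to be argued separately. A brute-force route through the monotone density or Karamata theorems of Appendix~\ref{appndx:Karamata} would demand regular variation of $g$, which is stronger than what Assumption~\ref{asm:reg} supplies. The exact identity makes existence automatic, so no regularity of $g'$ beyond ordinary differentiability is needed, and the whole argument collapses to the differentiation step followed by inverting the limit of Lemma~\ref{lem:dSof1}.
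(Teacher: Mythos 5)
Your proof is correct, and its engine is the same as the paper's: differentiate the factorization $Q(x)-x=(1-x)^2g(x)$ of Proposition~\ref{prop:Qminusx} to obtain the exact pointwise identity $\frac{1-Q'(x)}{(1-x)g(x)}=2-\frac{(1-x)g'(x)}{g(x)}$, which the paper writes in the equivalent form $\frac{Q(x)-x}{(1-x)(1-Q'(x))}=\Bigl(2-\frac{(1-x)g'(x)}{g(x)}\Bigr)^{-1}$. Where you genuinely diverge is the finishing move. The paper argues from Assumption~\ref{asm:reg} that $\lim_{x\to1-}\frac{(1-x)g'(x)}{g(x)}$ exists or equals $\pm\infty$, and then applies L'H\^opital's rule to $\frac{\ln g(x)}{-\ln(1-x)}$ to identify that limit with $L$; it thus uses essentially only the definition \eqref{eqn:gLnLimL} of $L$ and its finiteness. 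You instead import the quantitative half of Lemma~\ref{lem:dSof1}, namely that the ratio in Assumption~\ref{asm:reg} converges to $\frac{1}{2-L}$, invert, and read the answer off the identity; this buys you the existence of \eqref{eqn:dgLnLimL} for free and avoids L'H\^opital entirely, at the cost of leaning on the full conclusion of Lemma~\ref{lem:dSof1} rather than just the definition of $L$. Two small repairs, neither affecting correctness: your sentence deducing that $\frac{1}{2-L}$ is ``finite and strictly positive'' from nonnegativity conflates two facts --- finiteness comes from $L$ being a real number, as asserted in Lemma~\ref{lem:dSof1} (indeed $L\le 1$, since monotonicity of $Q'$ gives $Q(x)-x=\int_x^1(1-Q'(s))\,ds\le(1-x)(1-Q'(x))$, so the ratio in Assumption~\ref{asm:reg} lies in $[0,1]$), while nonnegativity then rules out $\frac{1}{2-L}<0$ and finiteness of $L$ rules out the value $0$; and to divide pointwise you should also record that $1-Q'(x)>0$ for $x\in[0,1)$, which holds because $q_1=0$ and criticality force $Q'(x)<Q'(1)=1$ there.
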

\begin{proof}
Note that for $x \in (-1,1)$,
$${Q(x)-x \over (1-x)\big(1-Q'(x)\big)}={1 \over 2-{(1-x)g'(x) \over g(x)}}.$$
Thus, by Assumption \ref{asm:reg}, either the limit $\lim\limits_{x \rightarrow 1-}{(1-x)g'(x) \over g(x)}$
exists or is equal to $\,\pm\infty$. Hence, by the L'H\^{o}pital's rule,
$$L=\lim\limits_{x \rightarrow 1-}\left({\ln{g(x)} \over -\ln(1-x)}\right)=\lim\limits_{x \rightarrow 1-}{(1-x)g'(x) \over g(x)}.$$
\end{proof}

\medskip
\noindent
Before proving Theorem \ref{thm:IGWattractor}, we will need the following result.
\begin{lem}\label{lem:gRegVar}
Consider a critical Galton-Watson measure $\mathcal{GW}(\{q_k\})$ with $q_1=0$,
and let $g(x)$ be as defined in \eqref{eqn:gxDef} and $L$ be as defined in \eqref{eqn:gLnLimL}.
If Assumption \ref{asm:reg} is satisfied, then $g(1-1/y)$ is a regularly varying function (Def. \ref{def:regvar}) with index $L$,
i.e.,
\be\label{eqn:gRegVar}
\lim\limits_{x \to 1-}{g\left(\left(1-{1 \over r}\right)+{1 \over r}x\right) \over g(x)}=\lim\limits_{y \to \infty}{g\left(1-{1 \over ry}\right) \over g\left(1-{1 \over y}\right)}=r^L \qquad \text{ for all }\, r>0.
\ee
\end{lem}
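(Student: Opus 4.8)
The plan is to show that $g(1-1/y)$ is regularly varying with index $L$ by establishing the limit in \eqref{eqn:gRegVar} directly from Lemma~\ref{lem:dgLn}, which already gives us that $\lim_{x\to 1-}(1-x)g'(x)/g(x)=L$. The key observation is that if I substitute $x=1-1/y$ and set $\tilde g(y)=g(1-1/y)$, then the logarithmic derivative condition on $g$ translates into a statement about the logarithmic derivative of $\tilde g$, which is the standard characterization of regular variation via the Karamata representation. So the strategy is: first reformulate Lemma~\ref{lem:dgLn} in the variable $y$, then integrate the logarithmic derivative to recover the ratio $\tilde g(ry)/\tilde g(y)$.

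**First** I would compute the relationship between the logarithmic derivative of $g$ in the variable $x$ and that of $\tilde g(y)=g(1-1/y)$ in the variable $y$. With $x=1-1/y$ we have $dx/dy = 1/y^2$ and $1-x=1/y$, so by the chain rule $\tilde g'(y)=g'(1-1/y)\cdot y^{-2}$, giving
$$\frac{y\,\tilde g'(y)}{\tilde g(y)}=\frac{y^{-1}g'(1-1/y)}{g(1-1/y)}=\frac{(1-x)g'(x)}{g(x)}\Big|_{x=1-1/y}.$$
By Lemma~\ref{lem:dgLn}, the right-hand side converges to $L$ as $y\to\infty$ (equivalently $x\to 1-$). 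Hence $y\,\tilde g'(y)/\tilde g(y)\to L$.

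**Next** I would use this to obtain the ratio limit. Writing $\ln\tilde g(ry)-\ln\tilde g(y)=\int_y^{ry}\frac{\tilde g'(s)}{\tilde g(s)}\,ds=\int_y^{ry}\frac{1}{s}\cdot\frac{s\,\tilde g'(s)}{\tilde g(s)}\,ds$, and substituting $s=yu$ so that the domain becomes $u\in[1,r]$, I get
$$\ln\frac{\tilde g(ry)}{\tilde g(y)}=\int_1^{r}\frac{1}{u}\cdot\frac{(yu)\,\tilde g'(yu)}{\tilde g(yu)}\,du.$$
Since the integrand's second factor tends to $L$ uniformly on the compact interval $u\in[1,r]$ (and is eventually bounded there, justifying passage of the limit under the integral by dominated convergence), the integral converges to $L\int_1^r u^{-1}\,du=L\ln r$. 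Exponentiating yields $\tilde g(ry)/\tilde g(y)\to r^L$, which is exactly \eqref{eqn:gRegVar}; the first equality there is just the change of variable $y=1/(1-x)$, rewriting $(1-1/r)+x/r=1-1/(r/(1-x))=1-1/(ry)$.

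**The main obstacle** is the rigorous justification of passing the limit inside the integral, since Lemma~\ref{lem:dgLn} only gives pointwise convergence of $(1-x)g'(x)/g(x)$ to $L$ as $x\to 1-$, not uniform convergence. I would handle this by noting that $g$ is a power series with nonnegative coefficients \eqref{eqn:gxDef}, hence $g$ and $g'$ are positive and monotone nondecreasing on $(0,1)$; consequently $s\,\tilde g'(s)/\tilde g(s)$ is locally bounded and the convergence is indeed uniform on compact $u$-intervals once $y$ is large (for fixed $r$, the rescaled argument $yu$ ranges over $[y,ry]$, all of which tends to $\infty$). An alternative, cleaner route avoiding uniformity concerns is to invoke Karamata's characterization theorem directly (Appendix~\ref{appndx:Karamata}): a positive measurable function whose logarithmic derivative satisfies $y\,\tilde g'(y)/\tilde g(y)\to L$ is regularly varying of index $L$, which is a standard consequence of the Karamata representation theorem. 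Either way the analytic content is the same, and the nonnegativity of the coefficients of $g$ is what makes the monotonicity and boundedness estimates routine.
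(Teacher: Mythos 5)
Your proof is correct, and it reaches the conclusion by a genuinely different route than the paper. Both arguments rest on the same key input, Lemma~\ref{lem:dgLn}, i.e.\ $\lim_{x\to 1-}(1-x)g'(x)/g(x)=L$, but you then integrate the logarithmic derivative of $\tilde g(y)=g(1-1/y)$ directly: from $y\,\tilde g'(y)/\tilde g(y)\to L$ you write $\ln\bigl(\tilde g(ry)/\tilde g(y)\bigr)=\int_1^r u^{-1}\,\bigl[(yu)\tilde g'(yu)/\tilde g(yu)\bigr]\,du\to L\ln r$ and exponentiate. This is in effect a self-contained proof of the special case of Karamata theory you need, and your handling of the limit under the integral is sound --- indeed simpler than you suggest, since once $s\,\tilde g'(s)/\tilde g(s)\to L$ (finite by Lemma~\ref{lem:dgLn}), the entire range $s=yu\in[y,ry]$ escapes to infinity for fixed $r$, so convergence is automatically uniform on $u\in[1,r]$ and the monotonicity of $g$ and $g'$ is not actually needed. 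The paper instead verifies the hypothesis of the converse Karamata theorem (Thm.~\ref{thm:convKaramata}(a)): it applies L'H\^opital's rule to $y^{\alpha+1}\tilde g(y)\big/\int_a^y s^\alpha \tilde g(s)\,ds$ for $\alpha>-L-1$, reduces the resulting expression to the limit of Lemma~\ref{lem:dgLn} to get $\alpha+1+L$, and then cites the converse theorem to conclude regular variation with index $L$. Your approach buys independence from the appendix machinery and makes the mechanism transparent; the paper's buys brevity by outsourcing the integration step to a standard theorem. One small caveat on your suggested shortcut: the appendix does not state the ``logarithmic-derivative implies regular variation'' characterization you invoke as the cleaner alternative --- its converse Karamata theorem is phrased in terms of integral ratios, so using it as stated would force you through essentially the same L'H\^opital computation the paper performs; your primary integration argument is the one that stands on its own.
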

\begin{proof}
For $\alpha>-L-1$, the L'H\^{o}pital's rule and Lemma \ref{lem:dgLn} yield
\begin{align*}
\lim\limits_{y \to \infty}{y^{\alpha+1}g(1-1/y) \over  \int\limits_a^y s^\alpha g(1-1/s)\,ds}
&=\lim\limits_{y \to \infty}{(\alpha+1)y^\alpha g(1-1/y)+ y^{\alpha-1}g'(1-1/y) \over  y^\alpha g(1-1/y)} \\
&=\alpha+1+\lim\limits_{y \to \infty}{y^{\alpha-1}g'(1-1/y) \over  y^\alpha g(1-1/y)} \\
&=\alpha+1+\lim\limits_{x \rightarrow 1-}{(1-x)g'(x) \over g(x)} ~=\alpha+1+L.
\end{align*}
Hence, by the Converse Karamata's theorem (Thm.~\ref{thm:convKaramata}), $g(1-1/y)$ is a regularly varying function with index $L$,
and \eqref{eqn:gRegVar} holds.
\end{proof}

\medskip
\noindent
The following lemma will be the instrument for establishing $\mathcal{IGW}(q)$ trees are attractors.
\begin{lem}\label{lem:IGWattractor}
Consider a Galton-Watson measure $\mathcal{GW}(\{q_k\})$ with $q_1=0$ on $\cT^|$.
Suppose the measure is critical and Assumption \ref{asm:reg} is satisfied. Then, its progeny generating function $Q(z)$ satisfies
$$\lim\limits_{x \rightarrow 1-}{Q\big(z+(1-z)x\big)-\big(z+(1-z)x\big) \over (1-x)(1-Q'(x))}={1 \over 2-L}(1-z)^{2-L},$$
where $L$ is as defined in \eqref{eqn:gLnLimL}.

If the Galton-Watson measure $\mathcal{GW}(\{q_k\})$ (with $q_1=0$) is subcritical, then 
$$\lim\limits_{x \rightarrow 1-}{Q\big(z+(1-z)x\big)-\big(z+(1-z)x\big) \over (1-x)(1-Q'(x))}=1-z.$$
\end{lem}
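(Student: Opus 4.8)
The plan is to push everything through the auxiliary function $g$ supplied by Proposition~\ref{prop:Qminusx}, which writes $Q(x)-x=(1-x)^2 g(x)$, and then to read off the limit from the regular-variation properties of $g$ recorded in Lemmas~\ref{lem:dgLn} and~\ref{lem:gRegVar}. Write $w=z+(1-z)x$ for the argument appearing in the numerator; the decisive observation is that $1-w=(1-z)(1-x)$, so applying Proposition~\ref{prop:Qminusx} at $w$ gives $Q(w)-w=(1-w)^2 g(w)=(1-z)^2(1-x)^2\,g\big(z+(1-z)x\big)$. This already isolates the factor $(1-z)^2$ together with a factor $(1-x)^2$ that will cancel against the denominator.

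Next I would rewrite the denominator in terms of $g$. Differentiating the identity $Q(x)-x=(1-x)^2 g(x)$ yields $1-Q'(x)=(1-x)\big(2g(x)-(1-x)g'(x)\big)$, hence $(1-x)(1-Q'(x))=(1-x)^2\big(2g(x)-(1-x)g'(x)\big)$. Forming the quotient, the two copies of $(1-x)^2$ cancel and, after dividing through by $g(x)$, one is left with
$$\frac{Q(w)-w}{(1-x)(1-Q'(x))}=\frac{(1-z)^2\,\dfrac{g\big(z+(1-z)x\big)}{g(x)}}{\,2-\dfrac{(1-x)g'(x)}{g(x)}\,}.$$
Everything now reduces to evaluating the two ratios on the right as $x\to1-$.

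For the critical case, Lemma~\ref{lem:dgLn} gives $\frac{(1-x)g'(x)}{g(x)}\to L$, so the denominator tends to $2-L$; this limit is positive and finite because its reciprocal is exactly the limit in Lemma~\ref{lem:dSof1} (the $z=0$ specialization of the present statement). For the numerator I would invoke Lemma~\ref{lem:gRegVar} with the choice $r=\frac{1}{1-z}$: then $\big(1-\frac1r\big)+\frac1r x=z+(1-z)x=w$, and the lemma delivers $\frac{g(z+(1-z)x)}{g(x)}\to r^L=(1-z)^{-L}$ for every $z\in[0,1)$. Combining the two limits, the ratio converges to $\frac{(1-z)^2(1-z)^{-L}}{2-L}=\frac{1}{2-L}(1-z)^{2-L}$, as claimed.

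The subcritical case cannot be routed through Lemmas~\ref{lem:dgLn} and~\ref{lem:gRegVar} (stated for critical measures), so I would argue directly. Here the mean $m:=Q'(1)<1$ is finite, so $1-Q'(x)\to1-m>0$, and the elementary one-sided computation $\frac{Q(w)-w}{1-w}=1+\frac{Q(w)-1}{1-w}\to 1-m$ holds. Since $1-w=(1-z)(1-x)$, this gives $Q(w)-w\sim(1-m)(1-z)(1-x)$ while $(1-x)(1-Q'(x))\sim(1-m)(1-x)$, so the quotient tends to $1-z$. I expect the only genuine obstacle to lie in the critical case: correctly matching the composite argument $z+(1-z)x$ to the normalization in Lemma~\ref{lem:gRegVar} (that is, identifying $r=\tfrac{1}{1-z}$) and confirming $2-L\neq0$ so that the limit is finite; once the differentiation identity and the cancellation of $(1-x)^2$ are carried out, the remainder is routine.
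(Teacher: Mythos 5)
Your proposal is correct and takes essentially the same route as the paper's own proof: the identity $Q(w)-w=(1-z)^2(1-x)^2\,g(w)$ for $w=z+(1-z)x$ from Proposition~\ref{prop:Qminusx}, the differentiated relation $1-Q'(x)=(1-x)\bigl(2g(x)-(1-x)g'(x)\bigr)$, then Lemma~\ref{lem:dgLn} for the denominator and Lemma~\ref{lem:gRegVar} with $r=\tfrac{1}{1-z}$ for the ratio $g\bigl(z+(1-z)x\bigr)/g(x)\to(1-z)^{-L}$. Your direct treatment of the subcritical case via $\tfrac{Q(w)-w}{1-w}\to 1-Q'(1)$ is also exactly the paper's computation, so there is nothing to fix.
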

\begin{proof}%[Proof of Lemma \ref{lem:IGWattractor}.]
Consider a critical Galton-Watson measure $\mathcal{GW}(\{q_k\})$ with $q_1=0$ and progeny generating function $Q(z)$.
For $x,z \in (-1,1)$, we have
$$Q\big(z+(1-z)x\big)-\big(z+(1-z)x\big)=(1-z)^2 (1-x)^2\, g\big(z+(1-z)x\big)$$
Thus, as
$$1-Q'(x)=2(1-x)g(x)-(1-x)^2g'(x),$$
Lemma \ref{lem:dgLn} yields
\begin{align*}
\lim\limits_{x \rightarrow 1-}&{Q\big(z+(1-z)x\big)-\big(z+(1-z)x\big) \over (1-x)(1-Q'(x))} 
= (1-z)^2 \lim\limits_{x \rightarrow 1-}{g\big(z+(1-z)x\big) \over 2g(x)-(1-x)g'(x)} \\
&= (1-z)^2 \lim\limits_{x \rightarrow 1-}{g\big(z+(1-z)x\big) \over \left(2-{(1-x)g'(x) \over g(x)}\right)g(x)} 
~= {1 \over 2-L}(1-z)^2 \lim\limits_{x \rightarrow 1-}{g\big(z+(1-z)x\big) \over g(x)} \\
&= {1 \over 2-L}(1-z)^2 (1-z)^{-L} ~= {1 \over 2-L}(1-z)^{2-L}
\end{align*}
by \eqref{eqn:gRegVar} with $r={1 \over 1-z}$.
The main statement in Lemma \ref{lem:IGWattractor} follows.

\medskip
\noindent
Now, if we consider a subcritical Galton-Watson measure $\mathcal{GW}(\{q_k\})$ with $q_1=0$ and progeny generating function $Q(z)$.
Then, $Q'(1)<1$, and
\begin{align*}
\lim\limits_{x \rightarrow 1-}&{Q\big(z+(1-z)x\big)-\big(z+(1-z)x\big) \over (1-x)(1-Q'(x))} 
={1 \over 1-Q'(1)} \lim\limits_{x \rightarrow 1-}{Q\big(z+(1-z)x\big)-\big(z+(1-z)x\big) \over 1-x} \\
&={1-z \over 1-Q'(1)} \lim\limits_{x \rightarrow 1-}{Q\big(1-(1-z)(1-x)\big)-Q(1)+(1-z)(1-x) \over (1-z)(1-x)} 
~=1-z.
\end{align*} 
\end{proof}

\medskip
\noindent
Now, we are ready to prove Theorem \ref{thm:IGWattractor}.
\begin{proof}[Proof of Theorem \ref{thm:IGWattractor}.]
Suppose $\mu \equiv\mathcal{GW}(\{q_k\}, \lambda)$ with $q_1=0$  is critical and Assumption \ref{asm:reg} holds.
Then, by equation \eqref{eqn:pruningG} in Lemma~\ref{lem:pruningGW} and Lemma \ref{lem:IGWattractor}, 
the generating function of $\,\textsc{shape}(\S(T))$ converges to
\begin{align*}
z+  \lim\limits_{t \to \infty} {Q\big(z+(1-z)(1-p_t)\big)-(1-p_t) -zp_t \over  p_t\big(1-Q'(1-p_t)\big)}
&=z+\lim\limits_{x \rightarrow 1-}{Q\big(z+(1-z)x\big)-\big(z+(1-z)x\big) \over (1-x)(1-Q'(x))} \\
&=z+{1 \over 2-L}(1-z)^{2-L},
\end{align*}
the generating function of $\mathcal{IGW}(q)$ with $q={1 \over 2-L}$ and $L$ as defined in \eqref{eqn:gLnLimL}.

\medskip
\noindent
If the Galton-Watson measure $\mu \equiv\mathcal{GW}(\{q_k\}, \lambda)$ (with $q_1=0$) is subcritical, then 
Lemma~\ref{lem:pruningGW} and Lemma \ref{lem:IGWattractor} yield convergence of the generating function of $\,\textsc{shape}(\S(T))$ 
\begin{align*}
z+  \lim\limits_{t \to \infty} {Q\big(z+(1-z)(1-p_t)\big)-(1-p_t) -zp_t \over  p_t\big(1-Q'(1-p_t)\big)}
&=z+\lim\limits_{x \rightarrow 1-}{Q\big(z+(1-z)x\big)-\big(z+(1-z)x\big) \over (1-x)(1-Q'(x))} \\
&=z+(1-z) ~=1
\end{align*}
the generating function of $\mathcal{GW}(q_0\!=\!1)$.
Hence,  for $\,T \stackrel{d}{\sim} \mu$, the conditional distribution
${\sf P}\big(\textsc{shape}(\S(T))=\cdot \,\big|\,\S(T)\not=\phi \big)$ converges to a point mass measure, $\mathcal{GW}(q_0\!=\!1)$.
\end{proof}

\medskip
\noindent
\begin{proof}[Proof of Theorem \ref{thm:coloringIGWattractor}.]
Analogously to the proof of Theorem \ref{thm:IGWattractor} above, 
Theorem \ref{thm:coloringIGWattractor} follows from formula \eqref{eqn:coloring} in Theorem \ref{thm:coloring} 
and Lemma \ref{lem:IGWattractor}.
\end{proof}

\medskip
\noindent
\begin{proof}[Proof of Theorem \ref{thm:hrIGWattractor}.]
Following the steps in the proof of Theorem \ref{thm:IGWattractor} above, 
Theorem \ref{thm:hrIGWattractor} follows from formula \eqref{eqn:hrGW} in Theorem \ref{thm:hrGW} 
with $p={\sf P}\big(R_{H_n}\circ \hdots \circ R_{H_1}(T) \not= \phi \big)$
and Lemma \ref{lem:IGWattractor}.
\end{proof}

\section{Discussion}\label{sec:discuss}%%%%%%%%%%%%%%%
In this work, we established the metric and attractor properties 
of the IGW branching processes with respect to a family of the generalized dynamical pruning 
operators.
Informally, these operators eliminate a tree from leaves toward the root and are 
flexible enough to accommodate for a number of classic (e.g. continuous erasure) and custom 
(e.g., erasure by the number of leaves) tree elimination rules.  
Together with the richness of the IGW family, which includes power-law offspring distributions with
tail indices in the range between $1$ and $2$, this makes the presented results 
a useful tool for a variety of physical and mathematical problems.

Observe that erasing a random tree in accordance with the generalized dynamical pruning
describes a coalescence dynamics -- merging of particles represented by the tree
leaves into consecutively larger clusters represented by the internal vertices. 
The invariance and attractor properties of the IGW branching processes can be used 
to study a number of merger dynamics.
For example, the continuum ballistic annihilation process 
(a ballistic motion of random-velocity particles that annihilate at contact) has been shown in \cite{KZ20} to correspond to a generalized dynamical pruning with $\varphi(T) = \textsc{length}(T)$,
as in Example~\ref{ex:L}. 
The invariance of the critical binary Galton-Watson measure $\mathcal{IGW}(1/2, \lambda)$ under the generalized dynamical pruning was used in \cite{KZ20} to obtain an explicit analytical description of 
the annihilation dynamics for a special case of the initial two-valued velocity alternating at the instances of a Poisson process. 
Similarly, the generalized dynamical pruning with $\varphi(T) = \textsc{height}(T)$ as in Example~\ref{ex:height}
corresponds to one dimensional Zeldovich model in cosmology.
The invariance results of this work may provide an interesting analytical insight into the dynamics
of these and other models of coalescence.

The IGW branching processes naturally arise in seismological data
that are traditionally modeled by branching processes with immigration; see \cite{KZBZ21} for a review and discussion.
In essence, a sequence of earthquakes in a region is represented as a collection of clusters, each of which is initiated by an immigrant (the first cluster event).  
It has been shown in \cite{KZBZ21} that the IGW process provides a close approximation to the 
existing earthquake occurrence models and to the observed earthquake cluster statistics in southern California. 
The metric properties of the IGW trees have a meaningful interpretation in the seismological setting, where
the edge length represent interevent times.
The attraction property of the IGW processes allows one to construct a robust earthquake 
modeling framework, which is stable with respect to various catalog uncertainties.
The IGW processes may provide a useful model in other areas that deal with imprecisely
observed data represented by trees. 

We conclude with an open problem.
Lemma~\ref{lem:onlyIGW} established uniqueness of the IGW processes as the invariants
of the generalized dynamical pruning.
The results of Duquesne and Winkel \cite{DW2007} (see Thm.~\ref{thm:coloring} of this paper)
show that the IGW processes are invariant with respect to a broader set of tree reductions. 
It would be interesting to describe all tree transforming operators that preserve
the IGW invariance (and attraction) property.

\section*{Acknowledgements}%%%%%%%%%%%%
%We are thankful to Matthias Winkel for pointing to us that the generalized dynamical pruning is a hereditary reduction.
We are thankful to Matthias Winkel for pointing to us the connection between the generalized dynamical pruning and the hereditary reduction.
This research is supported by NSF awards DMS-1412557 (YK) and EAR-1723033 (IZ).

%+++++++++++++++++++++++++++++++++++++++++++++++++++++++++++++++++++++++++++++++++++++++

\appendix

\section{Lagrange Inversion Theorem}\label{appndx:LIT}%%%%%%%%%%%%%%%%%%

Lagrange Inversion Theorem (aka Lagrange Inversion Formula) can be found in E.\,T. Whittaker and G.\,N. Watson \cite{WW1927}
and in M.~Abramowitz and  I.\,A.~Stegun \cite{AS1964}.
\begin{thm}[{{\bf Lagrange Inversion Theorem}}]\label{thm:LIT}
Consider a function $g:\mathbb{C}\to\mathbb{C}$ such that  $g(w)$ is analytic in a neighborhood of the origin, $g(0)=0$, and $g'(0)\not=0$. 
Then, $g^{-1}$ can be expressed as the following power series near the origin
$$g^{-1}(z)=\sum\limits_{n=1}^\infty {z^n \over n!} \left[{d^{n-1} \over dw^{n-1}}\left({w \over g(w)}\right)^n\right]_{w=0}.$$
Moreover, for any $\varphi:\mathbb{C}\to\mathbb{C}$ analytic in a neighborhood around the origin,
$$\varphi\big(g^{-1}(z)\big)=\varphi(0)+\sum\limits_{n=1}^\infty {z^n \over n!} \left[{d^{n-1} \over dw^{n-1}}\left(\varphi'(w){w \over g(w)}\right)^n\right]_{w=0}.$$
\end{thm}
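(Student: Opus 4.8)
The plan is to prove the general Lagrange--B\"urmann identity by the classical residue method; the first displayed formula is then merely the special case $\varphi(w)=w$ (where $\varphi'\equiv 1$), so it suffices to establish the second identity. Since $g(0)=0$ and $g'(0)\neq 0$, the holomorphic inverse function theorem guarantees that $g$ restricts to a biholomorphism from a neighborhood $U$ of the origin onto a neighborhood $V$ of the origin, with $g^{-1}$ analytic on $V$. Consequently $F(z):=\varphi\big(g^{-1}(z)\big)$ is analytic near $0$, and its Taylor coefficients are exactly what must be computed.

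First I would extract those coefficients via Cauchy's formula: for small $\varepsilon>0$ with $\{|z|\le\varepsilon\}\subset V$, one has $\,[z^n]F=\frac{1}{2\pi i}\oint_{|z|=\varepsilon}\frac{\varphi(g^{-1}(z))}{z^{n+1}}\,dz$. Next I would change variables $z=g(w)$, $dz=g'(w)\,dw$; because $g$ is an orientation-preserving bijection fixing the origin, the circle $|z|=\varepsilon$ pulls back to a simple closed contour $\gamma$ winding once around $0$ in the $w$-plane, turning the integral into $\frac{1}{2\pi i}\oint_\gamma \varphi(w)\,g'(w)\,g(w)^{-(n+1)}\,dw$. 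The decisive algebraic step is the identity $\frac{g'(w)}{g(w)^{n+1}}=-\frac{1}{n}\frac{d}{dw}g(w)^{-n}$, which permits integration by parts; the boundary term vanishes on the closed loop $\gamma$, leaving $\frac{1}{2\pi i\,n}\oint_\gamma \varphi'(w)\,g(w)^{-n}\,dw$. Writing $g(w)=w\cdot\big(g(w)/w\big)$ with $g(w)/w$ analytic and nonvanishing at $0$, I rewrite $g(w)^{-n}=w^{-n}\big(w/g(w)\big)^n$, so the integrand is $w^{-n}\psi(w)$ with $\psi(w):=\varphi'(w)\big(w/g(w)\big)^n$ analytic near $0$. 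Reading off the residue as the coefficient of $w^{n-1}$ in $\psi$ gives $\psi^{(n-1)}(0)/(n-1)!$, and the prefactor $1/n$ upgrades $(n-1)!$ to $n!$, producing precisely $\frac{1}{n!}\big[\frac{d^{n-1}}{dw^{n-1}}\big(\varphi'(w)(w/g(w))^n\big)\big]_{w=0}$. Summing over $n$ and using $F(0)=\varphi\big(g^{-1}(0)\big)=\varphi(0)$ yields the stated series.

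The step I expect to require the most care is the contour change of variables: one must verify that $\gamma=g^{-1}(\{|z|=\varepsilon\})$ is a single simple loop with winding number $+1$ about the origin, and that every manipulation stays inside $U$, where all the functions in play are holomorphic; this is where the hypothesis $g'(0)\neq 0$ is genuinely used. Convergence of the final power series needs no separate argument, since it is simply the Taylor expansion of the analytic function $F$ on $V$. I would close by remarking that substituting $\varphi(w)=w$, hence $\varphi'(w)=1$, collapses $\psi$ to $\big(w/g(w)\big)^n$ and recovers the first displayed formula for $g^{-1}$ itself.
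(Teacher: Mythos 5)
Your argument is correct, but there is nothing in the paper to compare it against: Theorem~\ref{thm:LIT} is stated in Appendix~\ref{appndx:LIT} as a classical result and quoted without proof from Whittaker--Watson \cite{WW1927} and Abramowitz--Stegun \cite{AS1964}. Your residue derivation is in fact the standard textbook proof found in those sources, and every step checks out: the inverse function theorem gives analyticity of $g^{-1}$ near $0$ (so convergence of the series is automatic, as you note); Cauchy's formula extracts $[z^n]\varphi(g^{-1}(z))$; the substitution $z=g(w)$ is legitimate because $g$ is a biholomorphism near $0$, so $\gamma=g^{-1}(\{|z|=\varepsilon\})$ is a simple closed loop with winding number $+1$ about the origin (by the argument principle, $\frac{1}{2\pi i}\oint_\gamma \frac{g'}{g}\,dw$ equals the winding number of the image circle, namely $1$, matching the simple zero of $g$ at $0$); the integration by parts via $g'(w)g(w)^{-(n+1)}=-\frac{1}{n}\frac{d}{dw}\,g(w)^{-n}$ is valid since $\varphi(w)g(w)^{-n}$ is single-valued and meromorphic, so the boundary term vanishes; and the residue read-off after writing $g(w)^{-n}=w^{-n}\bigl(w/g(w)\bigr)^n$ (using that $w/g(w)$ is analytic and nonvanishing at $0$, which is exactly where $g'(0)\neq 0$ enters) produces $\frac{1}{n!}\bigl[\frac{d^{n-1}}{dw^{n-1}}\bigl(\varphi'(w)(w/g(w))^n\bigr)\bigr]_{w=0}$. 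One useful byproduct of your computation: the $n$-th power should apply only to $w/g(w)$, whereas in the paper's display the exponent is typeset over the whole product $\varphi'(w)\,\frac{w}{g(w)}$, which is incorrect for $n\geq 2$; this is harmless for the paper itself, since only the first display is invoked (in the proof of Lemma~\ref{lem:LITapp}), i.e., the case $\varphi(w)=w$, where the two readings coincide.
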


\section{Regularly varying functions}\label{appndx:Karamata}%%%%%%%%%%%%%%%%%%%

We define regularly varying functions and state Karamata's theorems. 
See \cite{RegularBook1989} for a rigorous treatment of the theory of regularly varying functions.
\begin{Def}\label{def:regvar}
A positive measurable function $f(x)$ is said to be {\bf regularly varying} with index $\beta \in\mathbb{R}$ if 
$$\lim\limits_{x \to \infty}{f(rx) \over f(x)}=r^\beta \qquad \text{ for all }\, r>0.$$
\end{Def}

\medskip
\noindent
\begin{thm}[{{\bf Karamata's theorem, direct part \cite{RegularBook1989}}}]\label{thm:Karamata}
Let $f(x):[a,\infty) \to [a,\infty)$ be a regularly varying function with index $\beta \in\mathbb{R}$. Then, 
  $$\lim\limits_{x \to \infty}{x^{\alpha+1}f(x) \over  \int\limits_a^x y^\alpha f(y)\,dy}=\alpha+\beta+1 \qquad \text{ for all }\,\alpha>-\beta-1$$
and  
  $$\lim\limits_{x \to \infty}{x^{\alpha+1}f(x) \over  \int\limits_x^\infty y^\alpha f(y)\,dy}=-(\alpha+\beta+1) \qquad \text{ for all }\,\alpha<-\beta-1.$$
\end{thm}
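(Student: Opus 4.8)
The plan is to reduce both assertions of Theorem~\ref{thm:Karamata} to a single statement by absorbing the weight $y^\alpha$ into $f$. Set $g(y):=y^\alpha f(y)$; since $y^\alpha$ is regularly varying with index $\alpha$ and $f$ with index $\beta$, the product $g$ is regularly varying with index $\gamma:=\alpha+\beta$. Using $x\,g(x)=x^{\alpha+1}f(x)$ and $\int_a^x g(y)\,dy=\int_a^x y^\alpha f(y)\,dy$, the first claim becomes
\[
\frac{x\,g(x)}{\int_a^x g(y)\,dy}\longrightarrow \gamma+1 \qquad(\gamma>-1),
\]
and the second becomes
\[
\frac{x\,g(x)}{\int_x^\infty g(y)\,dy}\longrightarrow -(\gamma+1)\qquad(\gamma<-1),
\]
so that $\gamma+1=\alpha+\beta+1$ recovers the stated constants. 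It therefore suffices to prove these two reduced limits.

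First I would express the reciprocal ratio as an integral of the normalized quotient. The substitution $y=ux$ yields
\[
\frac{\int_a^x g(y)\,dy}{x\,g(x)}=\int_{a/x}^{1}\frac{g(ux)}{g(x)}\,du
\qquad\text{and}\qquad
\frac{\int_x^\infty g(y)\,dy}{x\,g(x)}=\int_{1}^{\infty}\frac{g(ux)}{g(x)}\,du.
\]
By the definition of regular variation (Def.~\ref{def:regvar}), the integrand converges pointwise, $g(ux)/g(x)\to u^\gamma$, for each fixed $u>0$ as $x\to\infty$. The candidate limits are then $\int_0^1 u^\gamma\,du=\frac{1}{\gamma+1}$ (finite precisely because $\gamma>-1$) in the first case and $\int_1^\infty u^\gamma\,du=\frac{-1}{\gamma+1}$ (finite precisely because $\gamma<-1$) in the second; taking reciprocals gives $\gamma+1$ and $-(\gamma+1)$, as required.

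The substantive step, and the main obstacle, is justifying the interchange of limit and integral, since the pointwise convergence fails to be uniform near the singular endpoint ($u\downarrow0$ in the first integral, $u\uparrow\infty$ in the second). I would split each integral at fixed cut-offs $0<\delta<1<M$. On the compact middle range the Uniform Convergence Theorem for regularly varying functions \cite{RegularBook1989} upgrades the pointwise limit to uniform convergence of $g(ux)/g(x)$ to $u^\gamma$, so that portion converges to $\int_\delta^1 u^\gamma\,du$ (resp.\ $\int_1^M u^\gamma\,du$), and letting $\delta\downarrow0$, $M\uparrow\infty$ recovers the full target. To control the remaining tail I would invoke Potter's bounds \cite{RegularBook1989}: for any $\varepsilon>0$ there is $C>1$ with $g(ux)/g(x)\le C\max\{u^{\gamma+\varepsilon},u^{\gamma-\varepsilon}\}$ once $x$ and $ux$ are large. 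Choosing $\varepsilon\in(0,\gamma+1)$ in the first case makes the dominating exponent $u^{\gamma-\varepsilon}$ integrable at $0$, and choosing $\varepsilon\in(0,-(\gamma+1))$ in the second makes $u^{\gamma+\varepsilon}$ integrable at $\infty$; in either case dominated convergence forces the tail contribution to vanish uniformly in $x$ as the cut-off recedes. The delicate point is exactly this domination near the bad endpoint: the slowly varying factor of $g$ may oscillate and need not be monotone, so a crude bound will not suffice, and Potter's inequality --- itself derived from the representation theorem for slowly varying functions --- is what makes the estimate uniform. Combining the reduction, the integral representation with its pointwise limit, and the uniform/Potter control of the tails establishes both limits and completes the argument.
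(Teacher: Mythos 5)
The paper itself contains no proof of this statement: Theorem~\ref{thm:Karamata} is imported verbatim from \cite{RegularBook1989} as background material in Appendix~\ref{appndx:Karamata}, and in fact only the converse part (Thm.~\ref{thm:convKaramata}) is used in the body of the paper, in the proof of Lemma~\ref{lem:gRegVar}. So there is no in-paper argument to compare against; your proposal stands on its own, and it is the standard proof of the direct half of Karamata's theorem --- essentially the argument given in the cited reference. The reduction to $g(y)=y^\alpha f(y)$, regularly varying of index $\gamma=\alpha+\beta$, the rescaling $y=ux$ converting the reciprocal ratio into $\int_{a/x}^{1} g(ux)/g(x)\,du$ (resp.\ $\int_1^\infty g(ux)/g(x)\,du$), uniform convergence on compact $u$-ranges away from the singular endpoint, and Potter's bounds with $\varepsilon<\gamma+1$ (resp.\ $\varepsilon<-(\gamma+1)$) to dominate near it, are exactly the right ingredients, and your identification of where each sign condition on $\gamma+1$ enters is correct.

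Two small points should be made explicit to close the argument. First, in the $\gamma>-1$ case, Potter's inequality requires \emph{both} $x\ge X$ and $ux\ge X$, so it says nothing about $u\in(a/x,\,X/x)$; that piece equals $\frac{1}{x\,g(x)}\int_a^{X} g(y)\,dy$, whose numerator is a fixed constant (assuming, as one implicitly must for the statement to make sense, that $g$ is locally integrable near $a$), while $x\,g(x)$ is regularly varying of index $\gamma+1>0$ and hence tends to infinity, so this contribution vanishes --- your phrase ``once $x$ and $ux$ are large'' gestures at this but does not dispatch it. Second, in the $\gamma<-1$ case you should note that Potter's bound itself yields $\int_x^\infty g(y)\,dy<\infty$ for all large $x$, so the ratio under study is well defined before any limit is taken. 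With those two sentences added, your proposal is a complete and correct proof.
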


\medskip
\noindent
We will use the following converse to the above Karamata's theorem.
\begin{thm}[{{\bf Karamata's theorem, converse part \cite{RegularBook1989}}}]\label{thm:convKaramata}
Let $f(x)$ be a positive, measurable, and locally integrable function on $[a,\infty)$ and $\beta \in\mathbb{R}$, then 
\begin{description}
  \item[(a).] If there exist $\alpha>-\beta-1$ such that 
  $$\lim\limits_{x \to \infty}{x^{\alpha+1}f(x) \over  \int\limits_a^x y^\alpha f(y)\,dy}=\alpha+\beta+1,$$
  then $f(x)$ is a regularly varying function with index $\beta$.
  \item[(b).] If for some $\alpha<-\beta-1$, 
  $$\lim\limits_{x \to \infty}{x^{\alpha+1}f(x) \over  \int\limits_x^\infty y^\alpha f(y)\,dy}=-(\alpha+\beta+1),$$
  then $f(x)$ is a regularly varying function with index $\beta$.
\end{description}
\end{thm}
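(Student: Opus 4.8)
The plan is to reduce both parts to a statement about the \emph{integral} of $f$ rather than about $f$ itself, to show that this integral is regularly varying directly from the hypothesis, and then to recover the regular variation of $f$ by a single division. I will carry out part (a) in detail; part (b) is entirely analogous with the tail integral replacing the partial integral.

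For part (a) set $c=\alpha+\beta+1>0$ and $F(x)=\int_a^x y^\alpha f(y)\,dy$. Since $f$ is positive, measurable and locally integrable (and $a>0$), the integrand $y^\alpha f(y)$ is locally integrable, so $F$ is absolutely continuous, strictly increasing, eventually positive, and $F'(x)=x^\alpha f(x)$ for a.e.\ $x$. The hypothesis of Theorem~\ref{thm:convKaramata}(a) is exactly $\lim_{x\to\infty} x^{\alpha+1}f(x)/F(x)=c$. The first step is to prove that $F$ is regularly varying (Def.~\ref{def:regvar}) with index $c$. For fixed $r>0$ and $x_0$ large, $\log F$ is absolutely continuous on $[x_0,\infty)$ with $(\log F)'=F'/F$, so
\[
\frac{F(rx)}{F(x)}=\exp\!\left(\int_x^{rx}\frac{F'(s)}{F(s)}\,ds\right)
=\exp\!\left(\int_x^{rx}\frac{s^{\alpha+1}f(s)/F(s)}{s}\,ds\right),
\]
where I replaced $F'(s)$ by $s^\alpha f(s)$ (valid a.e., hence valid inside the integral). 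Since the numerator $s^{\alpha+1}f(s)/F(s)\to c$ by hypothesis, the inner integral tends to $c\log r$, whence $F(rx)/F(x)\to r^{c}$. Thus $F$ is regularly varying with index $c$, and we may write $F(x)=x^{c}L(x)$ with $L$ slowly varying.

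The recovery step is then immediate and, crucially, uses the hypothesis on $f$ verbatim rather than any differentiation of $F$: writing
\[
f(x)=\frac{x^{\alpha+1}f(x)}{F(x)}\cdot\frac{F(x)}{x^{\alpha+1}}
=\bigl(c+o(1)\bigr)\,\frac{x^{c}L(x)}{x^{\alpha+1}}
=\bigl(c+o(1)\bigr)\,x^{\beta}L(x),
\]
since $c-\alpha-1=\beta$, we obtain $f(x)\sim c\,x^{\beta}L(x)$. As $x^{\beta}L(x)$ is regularly varying with index $\beta$ and asymptotic equivalence preserves both regular variation and its index, $f$ is regularly varying with index $\beta$. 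For part (b) one sets $c=\alpha+\beta+1<0$ and $G(x)=\int_x^\infty y^\alpha f(y)\,dy$ (finite, as presupposed by the ratio appearing in the hypothesis), notes $G'(x)=-x^\alpha f(x)$ a.e.\ so that the hypothesis becomes $xG'(x)/G(x)\to c$, and then the same two steps give $G(x)=x^{c}L(x)$ and $f(x)\sim(-c)\,x^{\beta}L(x)$ with $-c>0$.

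The hard part will be making the convergence $F(rx)/F(x)\to r^{c}$ fully rigorous, i.e.\ controlling $\int_x^{rx}\bigl(s^{\alpha+1}f(s)/F(s)-c\bigr)s^{-1}\,ds$ as $x\to\infty$ by bounding it with $\sup_{s\ge x}\bigl|s^{\alpha+1}f(s)/F(s)-c\bigr|\cdot\log r$, together with the minor technical points that $F'=x^\alpha f$ holds only almost everywhere and that in part (b) the tail integral must be finite. None of these is serious: the first is the standard ``integrate the logarithmic derivative'' argument underlying the representation theorem for regularly varying functions, and the a.e.\ issue never reaches the final estimate because the recovery step invokes the hypothesis on $f$ directly.
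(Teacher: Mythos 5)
The paper does not actually prove this statement: Theorem~\ref{thm:convKaramata} is quoted in Appendix~\ref{appndx:Karamata} as a known result, with a citation to Bingham--Goldie--Teugels \cite{RegularBook1989}, so there is no in-paper proof to compare against. Judged on its own, your argument is correct, and it is essentially the classical proof of the converse half of Karamata's theorem as found in the cited reference. Writing $b(x)=x^{\alpha+1}f(x)/F(x)$, your identity $F(rx)/F(x)=\exp\left(\int_x^{rx} b(s)\,s^{-1}\,ds\right)$ is precisely the Karamata representation of $F$, from which $F$ is regularly varying with index $c=\alpha+\beta+1>0$; and your recovery step $f(x)=\left(b(x)/x^{\alpha+1}\right)F(x)\sim c\,x^{\beta}L(x)$ is the right way to get back to $f$ --- it invokes the hypothesis verbatim instead of differentiating an asymptotic relation, which is exactly where a naive converse argument would fail. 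The technical points you flag do resolve as you say: $F$ is absolutely continuous with $F'(x)=x^{\alpha}f(x)$ a.e.\ by Lebesgue differentiation; $\log F$ is absolutely continuous on $[x_0,\infty)$ because $F$ is increasing with $F(x_0)>0$; the bound $\sup_{s\ge x}|b(s)-c|\cdot|\log r|$ handles both orientations $r>1$ and $r<1$; asymptotic equivalence preserves regular variation and its index; and in part (b) the hypothesis itself forces finiteness of the tail integral, since an infinite tail (which, by local integrability, is infinite at every $x$ if at one) would make the ratio identically zero, contradicting $-(\alpha+\beta+1)>0$. It is worth noting explicitly where the strict inequality $\alpha>-\beta-1$ (resp.\ $\alpha<-\beta-1$) enters: you need $c\neq 0$ in the recovery step to divide out the constant, which is precisely why the borderline case $\alpha=-\beta-1$ is excluded from the theorem.
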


\section{Proof of Lemma \ref{lem:pruningGW}}\label{appndx:ProofsGDPGW}%%%%%%%%%%%%%%%

\begin{proof}[Proof of Lemma \ref{lem:pruningGW}]
%Generalized dynamical pruning of a random Galton-Watson tree 
%$T \stackrel{d}{\sim} \mathcal{GW}(\{q_k\},\lambda)$ is a random tree obtained by modifying the
%branching process in the following way. 
First, we show that the tree $\S(\varphi,T)$ obtained by pruning a Galton-Watson tree $T \stackrel{d}{\sim} \mu\equiv\mathcal{GW}(\{q_k\},\lambda)$, is also distributed as 
a Galton-Watson tree.

\medskip
\noindent
For $T \stackrel{d}{\sim} \mu$ and $s \geq 0$, let $T|_{\leq s}$ denote a subtree of $T$ consisting of all points $x$ in the metric space $T$ of distance no
more than $s$ from the root $\rho$, i.e., 
$$T|_{\leq s}=\{x \in T\,:\, d(x,\rho)\leq s\}.$$
Let $T|_{=s}$ denote the points in $T$ of distance $s$ from the root $\rho$, i.e., 
$$T|_{=s}=\{x \in T\,:\, d(x,\rho)=s\}.$$
Let $\cF_s^0=\sigma\big(T|_{\leq s}\big)$ be a sigma algebra generated by the history up to time $s$ (including branching history) of the Galton-Watson process that induces $T$. The future of the Galton-Watson process after time $s$ consists of the descendant subtrees
$$\big\{\Delta_{x,T}\,:x \in T|_{=s} \big\}.$$
Let $\cF_s'=\sigma\big(\Delta_{x,T}\,:x \in T|_{=s}\big)$ be a sigma algebra generated by the future events, after time $s$.
Measure $\mu$ being a Galton-Watson measure ( i.e., $\mu\equiv\mathcal{GW}(\{q_k\},\lambda)$) is equivalent to $T|_{=s}$ being 
a continuous time Markov branching process (see \cite{AN_book,Harris_book}). That is, there exists a filtration $\cF_s \supset \cF_s^0$ such that
\begin{enumerate}
 \item Markov property is satisfied:
  $$P\big(A\,\big|\,\cF_s \big) \,=\, P\big(A\,\big|~T|_{=s}\big) \qquad \forall A \in \cF_s'.$$
  %$$P\Big(\big\{\Delta_{x,T}\,:x \in T|_{=s} \big\}  ~~\Big|~~T|_{\leq s}\Big) ~\stackrel{d}{=}~\Big(\big\{\Delta_{x,T}\,:x \in T|_{=s} \big\}  ~~\Big|~~  T|_{=s}\Big).$$
  \item  %Let $Z(s)$ denote the cardinality of $T|_{=s}$. 
  Conditioned on $T|_{=s}$, the subtrees $\big\{\Delta_{x,T}\,:x \in T|_{=s} \big\}$ of $T$, denoted here by 
  $$\Big(\big\{\Delta_{x,T}\,:x \in T|_{=s} \big\} ~~\Big|~~ T|_{=s}\Big),$$
  are independent.
\end{enumerate}
Next, let 
$$\mathcal{I}_s=\sigma\big(\Delta_{x,\S(\varphi,T)}: x \in \S(\varphi,T)|_{=s}\big)$$ 
be a sigma algebra generated by the future events of $\,\S(\varphi,T)|_{=s}$, after time $s$. Then, since 
\begin{align*}
\big\{\Delta_{x,\S(\varphi,T)}\,:x \in \S(\varphi,T)|_{=s} \big\}&=\big\{\S(\varphi,\Delta_{x,T})\,:x \in \S(\varphi,T)|_{=s} \big\}\\
&=\big\{\S(\varphi,\Delta_{x,T})\,:x \in T|_{=s}\, \text{ such that }\, \S(\varphi,\Delta_{x,T})\not= \phi  \big\},
\end{align*}
we have $$\mathcal{I}_s =\sigma\big(S(\varphi,\Delta_{x,T})\,: x \in T|_{=s}\big) \, \subset \cF_s'.$$
We claim that conditioned on the event $\{\S(\varphi,T)\not= \phi\}$, the partition/anihilation evolution  
$\,\S(\varphi,T)|_{=s}\,$ is a continuous time Markov branching process with respect to the filtration $\cF_s$. 
Indeed,
\begin{enumerate}
 \item Markov property is satisfied: 
 $$P\big(A\,\big|\,\cF_s \big) \,=\, P\big(A\,\big|~T|_{=s}\big) \,=\, P\big(A\,\big|~\S(\varphi,T)|_{=s}\big) \qquad \forall A \in \cI_s \, \subset \cF_s'.$$
 Let $P_{\not= \phi}(A)=P(A|\,\S(\varphi,T)\not= \phi)$. Then,
 $$P_{\not= \phi}\big(A\,\big|\,\cF_s \big) \,=\, P_{\not= \phi}\big(A\,\big|~\S(\varphi,T)|_{=s}\big) \qquad \forall A \in \cI_s.$$
 
%  \begin{align*}
%  \Big(\big\{&\Delta_{x,\S(\varphi,T)}\,:x \in \S(\varphi,T)|_{=s} \big\} ~~\Big|~~ \S(\varphi,T)|_{\leq s},~\S(\varphi,T)\not= \phi\Big)  \\
%  &=\Big(\big\{\S(\varphi,\Delta_{x,T})\,:x \in T|_{=s},\, \S(\varphi,\Delta_{x,T})\not= \phi \big\}  ~~\Big|~~ \S(\varphi,T)|_{\leq s},~\S(\varphi,T)\not= \phi\Big)  \\
%  &\stackrel{d}{=}\Big(\big\{\S(\varphi,\Delta_{x,T})\,:x \in T|_{=s},\, \S(\varphi,\Delta_{x,T})\not= \phi \big\}  ~~\Big|~~ \big\{x \in T|_{=s},\, \S(\varphi,\Delta_{x,T})\not= \phi\big\},~\S(\varphi,T)\not= \phi\Big)\\
%  &\stackrel{d}{=}\Big(\big\{\S(\varphi,\Delta_{x,T})\,:x \in T|_{=s},\, \S(\varphi,\Delta_{x,T})\not= \phi \big\}  ~~\Big|~~ \S(\varphi,T)|_{= s},~\S(\varphi,T)\not= \phi\Big)\\
%  &=\Big(\big\{\Delta_{x,\S(\varphi,T)}\,:x \in \S(\varphi,T)|_{=s} \big\}~~\Big|~~ \S(\varphi,T)|_{= s},~\S(\varphi,T)\not= \phi\Big).
%  \end{align*}
%  as $\S(\varphi,T)|_{= s}=\{x \in T|_{=s}\,:\, \S(\varphi,\Delta_{x,T})\not= \phi\}$.
  \item  Conditioned on $\S(\varphi,T)|_{= s}$, the subtrees 
  $$\big\{\Delta_{x,\S(\varphi,T)}\,:x \in \S(\varphi,T)|_{=s} \big\}=\big\{\S(\varphi,\Delta_{x,T})\,:x \in T|_{=s},\, \S(\varphi,\Delta_{x,T})\not= \phi \big\}$$ 
  of $\S(\varphi,T)$  are independent.
% The subtrees of $\S(\varphi,T)$ distributed as 
%  \begin{align*}
%  \Big(\big\{\Delta_{x,\S(\varphi,T)}&\,:x \in \S(\varphi,T)|_{=s} \big\} ~~\Big|~~ \S(\varphi,T)|_{\leq s},~\S(\varphi,T)\not= \phi\Big)  \\
%  &\stackrel{d}{=}~\Big(\big\{\S(\varphi,\Delta_{x,T})\,:x \in T|_{=s},\, \S(\varphi,\Delta_{x,T})\not= \phi \big\}  ~~\Big|~~ \S(\varphi,T)|_{= s},~\S(\varphi,T)\not= \phi\Big).
%  \end{align*}
 \end{enumerate}

\bigskip
\noindent
In order to characterize the dendritic structure of $\S(\varphi,T)$ we start an upward exploration from the root $\rho \in T$ and proceed to the nearest internal vertex $v$ of $T$ (i.e., ${\sf par}(v)=\rho$). 
For a pair of integers $k \geq 2$ and $m \geq 0$, we have
\be\label{eqn:pBinom}
{\sf  P}\big({\sf br}_T(v)=k, \, {\sf br}_{\S(\varphi,T)}(v)=m \,\big|\,\S(\varphi,T)\not=\phi\big)=\binom{k}{m}(1-p_t)^{k-m} p_t^m  {q_k \over p_t},
\ee
where ${\sf br}_T(v)$ and ${\sf br}_{\S(\varphi,T)}(v)$ denote the branching numbers of vertex $v$ in $T$ and $\S(\varphi,T)$ respectively.
Here, the event ${\sf br}_{\S(\varphi,T)}(v)=1$ corresponds to the case when vertex $v$ is removed due to series reduction.
Thus, the case $m=1$ will be treated separately.

\medskip
\noindent
Next, we would like to find an expression for the branching probability $g_m$ of a pruned tree $\S(\varphi,T)$. 
For a given integer $m \geq 2$,
$${\sf  P}\big({\sf br}_{\S(\varphi,T)}(v)=m \,\big|\,\S(\varphi,T)\not=\phi \big)=(1-p_t)^{-m} p_t^{m-1}  \sum\limits_{k = m}^\infty \binom{k}{m} (1-p_t)^{k}  q_k.$$ 
%\begin{align}\label{eqn:ppar1}
%g_0 &= \mu\big({\sf deg}_{\S(T)}(v)=1 \,\big|\,\S(T)\not=\phi, \, {\sf deg}_{\S(T)}(v)\not= 2 \big) \nonumber \\
%&= {p_t^{-1}\sum\limits_{k = 2}^\infty  p^k  q_k \over 1-(1-p_t)^{-1}\sum\limits_{k = 2}^\infty k p^k  q_k}
%~={Q(1-p_t)-q_0 \over p_t(1-Q'(1-p_t))},
%\end{align}
Therefore, for $m \geq 2$,
\begin{align*}
g_m &= {\sf  P}\big({\sf br}_{\S(\varphi,T)}(v)=m \,\big|\,\S(\varphi,T)\not=\phi, \, {\sf br}_{\S(\varphi,T)}(v)\not= 1 \big) \nonumber \\
&=(1-p_t)^{-m} p_t^{m-1} {\sum\limits_{k = m}^\infty  \binom{k}{m} p^k  q_k \over 1-(1-p_t)^{-1}\sum\limits_{k = 2}^\infty k p^k  q_k}
~={p_t^{m-1} \over m!} Q^{(m)}(1-p_t) \,(1-Q'(1-p_t))^{-1}.
\end{align*}
The corresponding generating function of $\{g_k\}$ is equal to
\begin{align}\label{eqn:genGpk}
G(z) & =\sum\limits_{m=0}^\infty z^m g_m 
=g_0+ {p_t^{-1} \over 1-(1-p_t)^{-1}\sum\limits_{k = 2}^\infty k p^k  q_k}\sum\limits_{m=2}^\infty \sum\limits_{k = m}^\infty  \big(z(1-p_t)^{-1}p_t\big)^m \binom{k}{m} p^k  q_k
\nonumber \\
&= g_0+ {p_t^{-1} \over 1-(1-p_t)^{-1}\sum\limits_{k = 2}^\infty k p^k  q_k} \sum\limits_{k=2}^\infty \sum\limits_{m = 2}^k  \binom{k}{m} \big(z(1-p_t)^{-1}p_t\big)^m p^k  q_k 
\nonumber \\
&= g_0+ {p_t^{-1} \over 1-Q'(1-p_t)}\left(Q\big(z+(1-z)(1-p_t)\big)-Q(1-p_t) -zp_tQ'(1-p_t) \right) \nonumber \\
&=z+g_0+ {Q\big(z+(1-z)(1-p_t)\big)-Q(1-p_t) -zp_t \over  p_t\big(1-Q'(1-p_t)\big)}.
\end{align}
by the binomial theorem. Next, we plug in $z=1$ into \eqref{eqn:genGpk}, obtaining
$$g_0={Q(1-p_t)-(1-p_t) \over p_t\big(1-Q'(1-p_t)\big)}$$
as $G(1)=1$. Hence, \eqref{eqn:genGpk} rewrites as \eqref{eqn:pruningG}.
We proceed by differentiating ${d \over dz}$ in \eqref{eqn:pruningG}, obtaining
\be\label{eqn:genGpkdz}
G' (z)={Q'(1-p_t+zp_t)-Q'(1-p_t) \over 1-Q'(1-p_t)}.
\ee

\medskip
\noindent
An edge in $\S(\varphi,T)$ is a union of edges in the tree obtained after pruning $T$, but before the series reduction, separated by the degree two vertices.
The probability of a degree two vertex after pruning (but before the series reduction) is 
$$\sum\limits_{k=2}^\infty q_k k p_t (1-p_t)^{k-1}=p_tQ'(1-p_t).$$
Hence, by Wald's equation, the edge lengths in $\S(\varphi,T)$ are independent exponential random variables, each with rate
$$\lambda \big(1-Q'(1-p_t)\big).$$

\medskip
\noindent
Finally, we observe that if $\mu(T)$ is critical, then $Q'(1)=1$ and \eqref{eqn:genGpkdz} imply $G' (1)=1$. 
That is, $\nu(T\,|T\not= \phi)$ is a critical Galton-Watson measure.
\end{proof}

\end{document}